\newtheorem{prop}{Proposition}
\newtheorem{theorem}{Theorem}
\newtheorem{lemma}{Lemma}
\newtheorem{defi}{Definition}
\newtheorem{corollary}{Corollary}
\def\N{{\mathbb N}}
\def\R{{\mathbb R}}
\def\P{{\mathbb P}}
\def\E{{\mathbb E}}
\newcommand{\diff}{\mathop{}\mathopen{}\mathrm{d}}
\newcommand\croc[1]{\left\langle #1\right\rangle}
\newcommand\ind[1]{\mathbbm{1}_{\left\{#1\right\}}}
\def\cal{\mathcal}
\def\eps{\varepsilon}
\def\etal{{et al.}}
\title{\mbox{Analysis of Large Unreliable Stochastic Networks}}
\author{Wen Sun}\thanks{{\em Acknowledgments:} The first author's work is supported by a public grant overseen by the French National Research Agency (ANR) as part of the ``Investissements d'Avenir'' program (reference: ANR-10-LABX-0098). }
\email{Wen.Sun@inria.fr}
\author{Mathieu Feuillet}
\email{Mathieu.Feuillet@ssi.gouv.fr}
\address[M. Feuillet]{ANSSI,51 boulevard de la Tour-Maubourg, 75700 Paris 07 SP, France}
\author{Philippe  Robert}
\email{Philippe.Robert@inria.fr}
\urladdr{http://www-rocq.inria.fr/\string~robert}
\address[Ph. Robert,W. Sun]{INRIA Paris---Rocquencourt, Domaine de Voluceau, 78153 Le Chesnay, France}
\date{\today}
\keywords{Time Scales; Stochastic Averaging; Transient Markov Chains with Absorbing State; Skorohod Problem; Stochastic Networks with Failures; Reliability}
\begin{document}

\begin{abstract}
In this paper a stochastic model of a large distributed system where users' files are duplicated on unreliable data  servers is investigated. Due to a server breakdown,  a copy  of a file can be  lost, it can be retrieved if another copy of the same file is stored on other servers. In the case where no other copy of a given file is present in the network, it is definitively lost.  In order to have multiple copies of a given file, it is assumed that each server can devote a fraction  of its processing capacity to duplicate files on other servers to enhance the durability of the system.  

A simplified stochastic model of this network  is analyzed.  It is assumed that a copy of a given file is lost at some fixed rate and that the initial state is optimal: each file has the maximum number $d$ of copies located on the servers of the network.    The  capacity of duplication policy is used by the files with the  lowest number of copies.  Due to random losses, the state of the network is transient and all files will be eventually lost. As a consequence, a transient $d$-dimensional Markov process $(X(t))$ with a unique absorbing state describes the evolution this network. By taking  a scaling parameter $N$  related to the number of nodes of the network,  a scaling analysis of this process is developed.  The asymptotic behavior of $(X(t))$ is analyzed on time scales of the type $t\mapsto N^p t$ for  $0\leq p\leq d{-}1$.    The paper derives asymptotic results  on the decay of the network: Under  a stability assumption, the main results state  that the critical time scale for the decay of the system is given by $t\mapsto N^{d-1}t$. In particular  the duration of time after which a fixed fraction of files are lost is of the order of $N^{d-1}$.  When the  stability condition is not satisfied, i.e. when it is initially overloaded, it is shown that the state of the network converges to an interesting local equilibrium which is investigated. As a consequence it sheds some light on the role of the key parameters  $\lambda$, the duplication rate and $d$, the maximal number of copies,  in the design of these systems. The techniques used involve careful stochastic calculus for Poisson processes,  technical estimates and the proof of a stochastic averaging principle. 
\end{abstract}

\maketitle

\newpage 
\hrule

\vspace{-3mm}

\tableofcontents

\vspace{-10mm}

\hrule

\bigskip

\section{Introduction}

\subsection{Large Distributed Systems}
In this paper the problem of reliability of large distributed system is analyzed via mathematical models. A typical framework is a cloud computing environment where users' files are duplicated on several data servers.  When a server breaks down, all copies of files stored on this server are lost but they can be retrieved if copies of the same files are stored on other servers. In the case where no other copy of a given file is present in the network, it is definitively lost. Failures of disks occur naturally in this context, these events are quite rare but, given the large number of nodes of these large systems, this is not a negligible phenomenon at all at network scale.  For example, in a data center with 200~000 servers, in average five disks fail every day. See the extensive study Pinheiro \etal~\cite{Goog} in this domain at Google.  A natural consequence of these failures is the potential loss of some files if several servers holding copies of these files fail during a small time interval. For this reason this is a critical issue for companies deploying these large data centers.

\medskip
\noindent
{\sc Duplication Policies.} 
In order to maintain copies on distant servers, a fraction $\lambda$ of the bandwidth of each server has to be devoted to the duplication mechanism of its files to other servers. If, for a short period of time, several of the servers break down, it may happen that files will be lost for good just because all the available copies were on these servers and because a recovery procedure was not completed before the last copy disappeared. A second parameter of importance is $d$ the maximal number of copies of a given file in different servers. The general problem can then be presented as follows:  On the one hand, $d$ should be sufficiently large,  so that any file has a copy available on at least one server  at any time. On the other hand, the maximum number of copies for a given should not be too large, otherwise the necessary fraction of the server capacity for maintaining the number of copies would be very large and could impact other functions of the server. 

\subsection{Mathematical Models}
The natural critical parameters of such a distributed system with $N$ servers are the failure rate $\mu$ of servers, the bandwidth $\lambda$ allocated to duplication and the total number of files $F_N$. To design such a system, it is therefore desirable to have a duplication policy which maximizes the average number of files $\beta=F_N/N$ per server  and the first instant $T_N(\delta)$ when a fraction $\delta\in(0,1)$ of files is lost. The main goal of this paper is to give some insight on the role of these parameters through a simplified stochastic model. 

A lot of work has been done in computer science concerning the implementation of duplication algorithms. These systems are known as distributed hash tables (DHT). They play an important role in the development of some large scale distributed systems, see Rhea et al.~\cite{rhea-05} and Rowstron and Druschel~\cite{rowstron-01} for example. 

Curiously, except extensive simulations, little has been done to evaluate the performances of these algorithms. Simplified models using birth and death processes have been used. See Chun et al.~\cite{chun-06}, Picconi et al.~\cite{picconi-07} and Ramabhadran and Pasquale~\cite{ramabhadran-06}. In Feuillet and Robert~\cite{Feuillet-Robert}, a  mathematical model of the case of $d{=}2$ copies has been investigated.  In~\cite{Feuillet-Robert}, the main stochastic process of interest lives in dimension $1$ which  simplifies somewhat the analysis. As it will be seen, in our case, one has to investigate the more challenging problem of estimating some  transient characteristics of a $d{-}1$-dimensional  Markov process. 

To the best of our knowledge, there has not been any mathematical study investigating the dependence of the decay of the network, represented by the variable $T_N(\delta)$, with respect to the maximal number of copies $d$ and $\beta$ the average number of files per server. As it will be seen, even with a simplified model of the paper, the problem is already quite challenging.  One has to derive  estimates of transient characteristics of a transient $d$-dimensional Markov process on $\N^d$ with a reflection mechanism on the boundary of the state space. 

\medskip
\noindent
{\sc A Possible Mathematical Model.}
Without simplifying assumptions, a mathematical model could use a state descriptor $(Y_{j}(t), 1\leq j \leq F_N)$, where $Y_{j}(t)$ is  the subset  of  $\{1,\ldots,N\}$ of servers having a copy of file $j$ at time $t$. Note that the cardinality of $Y_i(t)$ is at most $d$ and that  file $i$ is lost if $Y_i(t)=\emptyset$. The transitions can be described as follows. 
\begin{enumerate}
\item  Loss: If, for $1\leq i\leq N$, node $i$ breaks down in state $(Y_j)$ then the value of $Y_j$ does not change if $i\not\in Y_j$ and, otherwise, $Y_j\mapsto Y_j{\setminus}\{i\}$. 
\item Duplication: if $1\leq i_1\not=i_2\leq N$ and $1\leq j\leq F_N$ are such that $|Y_j|<d$, $i_1\in Y_j$ and $i_2\not\in Y_j$, if the duplication policy at node $i_1$ does a copy of $j$ at $i_2$, then $Y_j\mapsto Y_{j}\cup\{i_2\}$ and the other coordinates are not affected by this change. Depending on the duplication policy at node $i_1$, the choice of the node $i_2$ and of file $j$ to copy may depend in a complicated way of the current state $(Y_j)$. 
\end{enumerate}
As it can be seen the state space is quite complicated and, moreover, its dimension  is growing with $N$ which is a difficulty to investigate the asymptotics for $N$ large. It does not seem to lead to a tractable mathematical model to study  for example the first instant when a fraction $\delta\in(0,1)$ of files are lost,
\[
\inf\left\{t\geq 0: \sum_{1}^{F_N} \ind{Y_j(t)=\emptyset}\geq \delta F_N\right\}. 
\]

\medskip
\noindent
{\sc Simplifying Assumptions.}
We present the mathematical model to be studied.  The model has been chosen so that the role of the parameter $d$ on the decay of the network can be investigated. To keep mathematics tractable,  simplifications for some of the other aspects of these systems have been done. We review the main features of our model and the assumptions we have done.
\begin{enumerate}
\item {\em Capacity for duplication.}\\
If there are $N$ servers and  each of them has an available bandwidth $\lambda$ to duplicate files, then the maximal capacity for duplication is $\lambda N$. One will assume that the duplication capacity can be used globally, i.e. the rate at which copies are created is $\lambda N$. 
\item {\em Duplication Policy.}\\
Moreover, the duplication capacity is used on the files with the {\em lowest number} of copies. The duplication capacity is in fact used at best, on the files that, potentially, are the most likely to be lost. 
\item {\em Failures.}\\
Any copy of a given file is lost at rate $\mu$. With this assumption, failures are more frequent but only a copy is lost at each event. In a more realistic setting, when a server breaks down, copies
of several different files are lost at the same time. 
\item {\em Topological Aspects.}\\
In practice, in DHT, servers are located on a logical ring and, in order to limit the communication overhead, the location of copies of a file owned by a given server $i$ are done at random  on a fixed subset $A_i$ of nodes, the leaf set of $i$. In our model, we assume that $A_i$ is the whole set of servers. 
\item {\em Statistical Assumptions.}\\
For mathematical convenience, the random variables used for the duration between two breakdowns of a server or of a duplication of a file are assumed to be exponentially distributed.
\end{enumerate}
 For this simplified model, the use of the total capacity of duplication is optimal, see items~(1) and~(2) below. Our results gives therefore an upper bound on the efficiency of duplication mechanisms in a general context. 

\medskip
\noindent
{\sc The Corresponding Markovian Model.}
With our assumptions, the state space can be embedded in a  fixed state space of dimension $d+1$.
If, for $0\leq i\leq d$ and $t\geq 0$, $X_i^N(t)$ is the number of files with $i$ copies, then the vector $X^N(t)=(X_0^N(t),X_1^N(t),\ldots,X_{d}^N(t))$ is a Markov process on $\N^{d+1}$. 

\medskip
\noindent
{\sc Transitions.}
The model starts initially with $F_N$ files, each of them having a maximal number of copies $d$, i.e. $X^N(0)=(0,0,\ldots,0,F_N)$. If $X^N(t)$ is in state $x{=}(x_i)\in\N^{d+1}$ and, for $0\leq i\leq d$, $e_i$ is the $i$th unit vector, there are two types of transitions for the Markov process. See Figure~\ref{fig2}.
\begin{enumerate}
\item  Loss: for $1\leq i \leq d$, $x\to x+e_{i-1}-e_{i}$.\\  A copy of a file with $i$ copies is lost  at rate $i x_i\mu$.
\item Duplication: for $1\leq i < d$, $x\to x-e_{i}+e_{i+1}$, $1\leq i<d$.\\ 
It occurs at rate $\lambda N$ under the condition $x_1=x_2=\cdots=x_{i-1}=0$, which means that there are no files with between $1$ and $i$ copies.
\end{enumerate}
Clearly enough, this system is transient, due to the random losses,  all files are eventually lost, the state $\emptyset=(F_N,0,\ldots,0)$ is an absorbing state. The aim of this paper is to describe the decay of the network, i.e. how the number $X_0^N(t)$ of lost files is increasing with respect to time. 

For fixed $F_N$ and $N$, this problem is related to the analysis of the transient behavior of a multi-dimensional Markov process. In our case, because of reflection on boundaries of $\N^{d+1}$ due to the duplication mechanism, the distribution of the evolution of the Markov process $(X_k(t))$ is not easy to study. For this reason, a scaling approach is used, with $N$ converging to infinity and $F_N$ being kept proportional to $N$. 

It will be assumed that the average number of files per server  $F_N/N$ converges to some $\beta>0$. For $\delta>0$,  the decay of the system can be represented by the random variable
\[
T_N(\delta)=\inf\left\{t\geq 0: \frac{X_0^N(t)}{N}\geq \delta \beta\right\}
\]
the time it take to have a fraction $\delta$ of the files lost.

\begin{figure}
        \centering
\scalebox{0.4}{\includegraphics{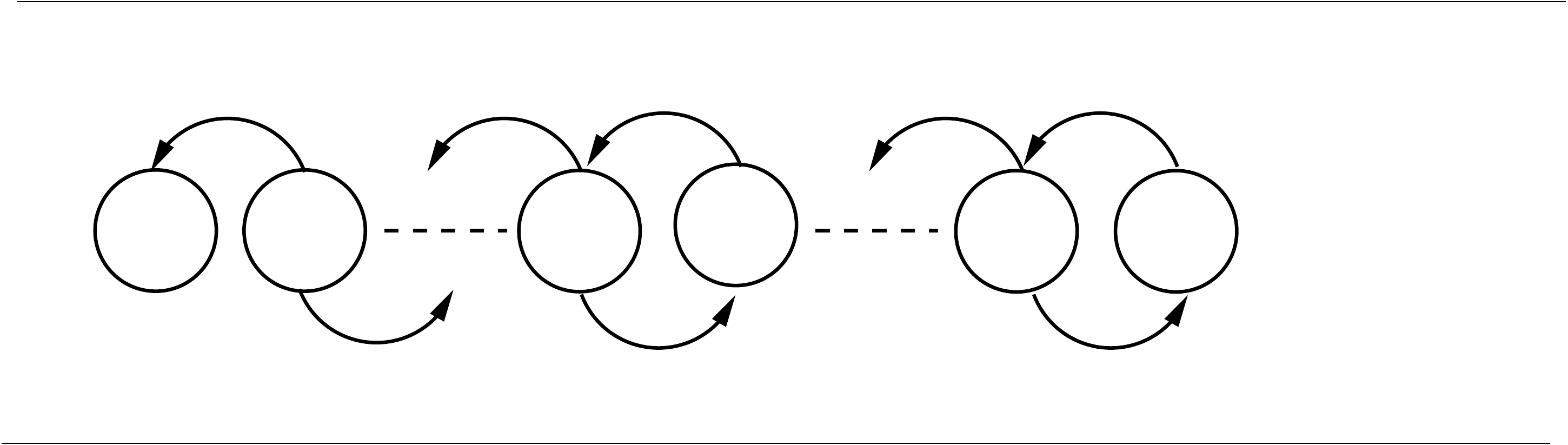}}
\put(-307,45){$\scriptstyle{x_0}$}
\put(-275,45){$\scriptstyle{x_{1}}$}
\put(-219,45){$\scriptstyle{x_{i\!{-}\!1}}$}
\put(-248,75){$\scriptscriptstyle{\mu(i\!{-}\!1)x_{i\!{-}\!1}}$}
\put(-197,75){$\scriptscriptstyle{\mu ix_i}$}
\put(-290,75){$\scriptscriptstyle{\mu x_1}$}
\put(-150,75){$\scriptscriptstyle{\mu (d\!{-}\!1) x_{d\!{-}\!1}}$}
\put(-106,75){$\scriptscriptstyle{\mu d x_d}$}
\put(-184,45){$\scriptstyle{x_{i}}$}
\put(-124,45){$\scriptstyle{x_{d\!{-}\!1}}$}
\put(-87,45){$\scriptstyle{x_{d}}$}
\put(-200,10){$ \scriptstyle{\lambda N}$ if $\scriptstyle{x_1{=}x_2{=}\cdots{=}x_{i\!{-}\!2}{=}0,x_{i{-}1}{>}0}$}
 \caption{Jump Rates for transfers of one unit between the coordinates of the  Markov Process $(X^N(t))$ in state $(x_0,x_1,\ldots,x_d)$}  \label{fig2}
\end{figure}

\subsection{Related Mathematical Models}\ 

\medskip
\noindent
{\sc Ehrenfest Urn Models.}
The Markov process $(X(t))$ can be seen as a particle system with $d+1$ boxes and any  particle in box $1\leq i\leq d$ moves to box $i-1$ at rate $\mu$. Box with index $0$ is a cemetery for particles.  A ``pushing'' process moves the particle the further on the left (box $0$ excluded) to the next box on its right at a high rate $\lambda N$. The model can be seen as a variation of the classical Ehrenfest urn model, see Karlin and McGregor~\cite{Karlin} and Diaconis \etal~\cite{Diaconis} for example. 

\medskip
\noindent
{\sc Polymerization Processes in Biology.}
 It turns out that this model has some similarities with stochastic processes representing polymerization processes of some biological models. The simplest model starts with a set of monomers (some proteins)  that can aggregate to form polymers. Due to random fluctuations within the cell,  a polymer of size $i$ and a monomer can produce a polymer of size $i+1$ at some fixed rate. In this context, as long as the size $i$ of the polymer is below some constant $i_0$, the polymer is not stable, it will lose monomers very quickly, at a high rate, it breaks into a polymer of size $i{-}1$ and a monomer. When the size is greater or equal to  $i_0$ (nucleation phase) the polymer is much more stable, it is assumed that it remains in this state.  Again due to the random fluctuations, all particles will end up in polymers of sizer greater than $i_0$. These ``large'' polymers  correspond to our box $0$ for the duplication process and the monomers are the equivalent of files with $d$ copies.     The {\em lag time} is the first instant when a positive fraction (half say)  of the monomers have been consumed into stable polymers. Note that it is analogous to our $T_N(1/2)$.  In this framework,  the fluctuations of the lag time have important consequences on biological processes. See Prigent \etal~\cite{Prion}, Xue et al.~\cite{Radford} and Szavits-Nossan et al.~\cite{Szavits} for example.

\subsection{Presentation of the Results}
The model starts with $F_N\sim\beta N$ files, all of them with the maximum number of copies $d$. The loss rate of a copy is $\mu$ and the duplication rate is $\lambda N$ but only for the files with the minimum number of copies. For $0\leq i\leq d$, $X_i^N(t)$ denotes the number of files with $i$ copies. 

It is first shown in Theorem~\ref{flth} that, as $N$ gets large, for the convergence of stochastic process
\begin{equation}\label{Soun}
\lim_{N\to+\infty} \left(\frac{X_k^N(t)}{N},0\leq k\leq d\right)=(x_k(t),0\leq k\leq d).
\end{equation}
The limit $(x_k(t),0\leq k\leq d)$ can be expressed as  the solution of a deterministic generalized Skorohod problem. See Section~\ref{Fluid} for the definition. 

\medskip
\noindent
{\sc Stable Case: $\lambda>d\mu\beta$.}\\
With the resolution of the generalized Skorohod problem, Proposition~\ref{charflprop} of Section~\ref{Fluid},  one proves that if $\lambda>d\mu\beta$ then the network is stable in the sense that the limiting process $(x_k(t),0\leq k\leq d)$ is constant and equal to $(0,\ldots,0,\beta)$. In other words, on the normal time scale,   the fraction of lost files is zero and, at the fluid level, all files have the maximal number of copies $d$.  

The key results of the stable case are Theorem~\ref{theodec} of Section~\ref{AverageSec} and  Theorem~\ref{theoCLT} of Section~\ref{CLT} .  These quite technical and delicate results rely on  careful stochastic calculus and various technical estimates related to the flows between coordinates of the process $(X_k^N(t))$. They are proved in several propositions of Section~\ref{AverageSec},  the important Proposition~\ref{lmtitg} in particular.  A stochastic averaging result completes the proof of these difficult convergence results. 

Theorem~\ref{theodec}  shows that the network is in fact beginning to lose files only on the time scale $t\mapsto N^{d-1} t$, i.e. that the convergence in distribution 
\begin{equation}\label{Nata}
\lim_{N\to+\infty} \left(\frac{X_0^N(N^{d-1}t)}{N}\right)=(\Phi(t))
\end{equation}
where $\Phi(t)$ is the unique solution $y\in[0,\beta]$ of the equation
\[
\left(1-\frac{y}{\beta}\right)^{\rho/d} e^y=\exp\left(-\lambda\frac{(d{-}1)!}{\rho^{d-1}} t\right).
\]
On this time scale, the fluid state of the network evolves from $(0,\ldots,0,\beta)$ to the absorbing state: $\Phi(0)=0$ and $\Phi(t)$ converges to $\beta$ as $t$ goes to infinity.

The second order fluctuations are described by the convergence in distribution
\[
\lim_{N\to+\infty} \left(\frac{X_0^N(N^{d-1} t)-N\Phi(t)}{\sqrt{N}}\right)=(W(t)),
\]
where $\Phi(t)$ is the solution of Equation~\eqref{decay} and  the process $(W(t))$ is the solution of  a stochastic differential equation, Relation~\eqref{SDECLT}.

\medskip
\noindent
{\sc Overloaded Case: $p\mu\beta<\lambda <(p+1)\mu\beta$ for some $2\leq p\leq d-1$.}\\
In this case,  the limiting process $(x_k(t),0\leq k\leq d)$ of Relation~\eqref{Soun} is not trivial, i.e. different from its initial state $(0,0,\ldots,0,\beta)$. Its explicit expression is given in Proposition~\ref{charflprop}. 
Moreover, it is shown that
\[
\lim_{t\to+\infty}(x_p(t),x_{p+1}(t))= \left((p+1)\beta -\frac{\lambda}{\mu}, \frac{\lambda}{\mu} - p \beta\right).
\]
This can be interpreted as follows: In the limit, on the normal time scale, at the fluid level all files have either $p$ or $p+1$ copies. The network exhibits therefore an interesting property of local equilibrium.

If one starts from this local equilibrium, it is shown that the system begins to lose files only the time scale $t\mapsto N^{p-1}t$. A result analogous to Relation~\eqref{Nata} is proved by Theorem~\ref{theoloc}, for the convergence in distribution, 
\[
\lim_{N\to+\infty} \left(\frac{X_0^N(N^{p-1}t)}{N},\frac{X_p^N(N^{p-1}t)}{N},\frac{X_{p+1}^N(N^{p-1}t)}{N}\right)=\left(\Phi_0(t),\Phi_p(t),\Phi_{p+1}(t)\right)
\]
holds, where $(\Phi_0(t),\Phi_p(t),\Phi_{p+1}(t))$ is deterministic, with the property that
\[
\lim_{t\to+\infty} \left(\Phi_0(t),\Phi_p(t),\Phi_{p+1}(t)\right)=\left(\beta-\frac{\rho}{p+1}, 0,\frac{\rho}{p+1} \right),
\]
i.e. asymptotically all files are either lost or have $p+1$ copies. 

These results give  the main phenomena concerning the evolution of a stable network  towards the absorbing state. It should be noted that we do not consider the special cases when the parameters satisfy the relation $\lambda{=}d\mu\beta$ for the following reason. When $\lambda{<}d\mu\beta$, the analysis involves a stochastic averaging principle with an underlying ergodic Markov process. See Section~\ref{StocAv} below. With equality $\lambda{=}d\mu\beta$, the corresponding Markov process is in fact null recurrent and proving a stochastic averaging principle in this context turns out to be more delicate. There are few examples in this domain to the best of our knowledge. See Khasminskii and Krylov~\cite{Khasminski} in the case of diffusions.  The same remark applies to  similar identities, like $\lambda=p\mu\beta$ for $1\leq p\leq d$.

\medskip
\noindent
{\sc Choice of Parameters.} 
As a consequence, the parameters $\beta$ and $d$ should be chosen so that $\lambda/(\beta \mu)>2$ and $d=\lfloor \lambda/(\beta \mu) \rfloor$ to maximize the time of decay of the network and at the same time to preserve the stability of the network. For $\delta\in(0,1)$ the variable $T_N(\delta)$, the first instant when a fraction $\delta$ of  files is lost,  is then of the order of $N^{d-1}$.

\medskip
\noindent
{\sc Outline of the Paper.} 
Section~\ref{ModSec} introduces the main notations and the stochastic evolution equations of the network. Section~\ref{Fluid} shows that the Markov process can be expressed as the solution of a generalized Skorohod problem, presented in Appendix~\ref{SkoSec}. A convergence result on the evolution of the network on the normal time scale is established and an explicit expression for the limiting process is provided. Section~\ref{AverageSec} investigates the decay of the network on the time scale $t\mapsto N^{d-1}t$ in the stable case. A central limit theorem on this time scale is established in Section~\ref{CLT}. The overloaded case is analyzed in Section~\ref{localsec}, the asymptotic evolution of the local equilibrium is studied on several time scales. 

\section{The Stochastic Model}\label{ModSec}
In this section we introduce the notations used throughout this paper as well as the statistical assumptions. The  stochastic  differential equations describing the evolution of the network are introduced. 
It is shown that, via a change of coordinates, the state descriptor of the process can be expressed as the solution of a generalized Skorohod problem. See Section~\ref{SkoSec}.  The convergence results at the normal time scale $t\mapsto t$ proved in the next section use this key property. 

A given file has a maximum of $d$ copies and each of them vanishes after an independent exponential time with rate rate $\mu$. A file with $0$ copy is lost for good. The recovery policy works as follows. The total capacity $\lambda N$ of the network is allocated to the files with the minimum number of copies. Consequently, if at a given time all non-lost files present have at least $k\geq 1$ copies and there are $x_k$ files with $k$ copies, then each of these is duplicated after  an independent exponential time with rate $\lambda N/x_k$. Initially it is assumed that there are $F_N$ files and that the network starts from the optimal state where each file has $d$ copies. 

For $0\leq k\leq d$, $X^N_k(t)$ denotes the number of files with $k$ copies at time $t$. The quantity $X^N_0(t)$ is the number of lost files at time $t$, the function  $t\mapsto X_0^N(t)$ is in particular non-decreasing. 

The conservation relation $X^N_0(t){+}X^N_1(t){+}\cdots{+}X^N_d(t){=}F_N$ gives that the stochastic process  $(X^N_0(t), X^N_1(t),\ldots,X^N_{d-1}(t))$ on $\N^d$ has the Markov property. Its $Q$-matrix $Q^N=(q^N(\cdot,\cdot))$ is given by
\begin{equation}\label{Qmatd}
\begin{cases}
\displaystyle \quad q^N(x,x-e_k+e_{k-1})&=\mu k x_k,\hfill  1\leq k\leq d-1,\\
\displaystyle \quad  q^N(x,x+e_{d-1})&=\mu d \left(F_N-x_0-x_1-\cdots -x_{d-1}\right),\\
\displaystyle \quad q^N(x,x+e_k-e_{k-1})&=\lambda N\ind{x_{k-1}>0, x_i=0,  1\leq i<k-1}, \hfill\qquad 2\leq
k\leq d-1,\\
\displaystyle  \quad q^N(x,x-e_{d-1})&=\lambda N\ind{x_{d-1}>0, x_i=0, 1\leq i<d-1},
\end{cases}
\end{equation}
where $e_k$ is the $k$th unit vector of $\N^{d}$. The first two relations come from the independence of losses of various copies of files, note that $F_N-x_0-x_1-\cdots -x_{d-1}$ is the number of files with $d$ copies. The last two equations are a consequence of the fact that the capacity  is  devoted  to  the  smallest  index   $k\geq  1$  such  that $x_k\not=0$. The coordinate $X_d^N(t)$ is of course given by 
\[
X_d^N(t)=F_N- X_0^N(t)-X_1^N(t)-\cdots-X_{d-1}^N(t),
\]
$X_d^N(t)$ is the  number  of   files  with  the  maximal   number  $d$  of  copies  at   time  $t$. The initial condition is such that 
$X_k^N(0)=0$ for $0\leq k\leq d-1$ and $X_d^N(0)=F_N\in\N$.  

\subsection*{Scaling Condition}
It is assumed that there exist some  $\beta>0$ and $\gamma\geq 0$ such that 
\begin{equation}\label{beta}
\lim_{N\to_\infty}\frac{F_N-N\beta}{\sqrt{N}}=\gamma.
\end{equation}

\subsection*{Equations of Evolution}\label{evosubsec}
To analyze the asymptotic behavior of the process $(X^N(t))$, it is convenient to introduce the  processes $(S^N(t)) {=} ((S_k^N(t),1 {\leq} k {\leq} d{-}1))$ and $(R^N(t)) {=} ((R_k^N(t),1{\leq} k{\leq} d{-}1))$. For $1\leq k\leq d-1$ and $t\geq 0$, $S_k^N(t)$ is the number of files with no more than $k$ copies at time $t$ and $R_k^N(t)$ is the local time at $0$ of the process $(S_k^N(t))$,
\[
S_k^N(t)=\sum_{i=1}^k X_i^N(t)\text{ and } R_k^N(t)=\int_0^t \ind{S_{k}^N(u)=0}\,\diff u.
\]
For any function  $h\in{\cal D}(\R_+,\R_+)$, i.e. $h$ is continuous on the right and has left limits on $\R_+$, one denotes by ${\cal N}_h$ denotes a point process on $\R_+$ defined as follows
\begin{equation}\label{Nh}
{\cal N}_h([0,t])=\int_0^t {\cal P}([0,h(u-)]\times \diff u)
\end{equation}
where $h(u{-})$ is the  left limit of $h$  at $u$ and ${\cal P}$ is a Poisson process in $\R_+^2$ whose intensity is the Lebesgue measure on $\R_+^2$. In particular if $h$ is deterministic, then ${\cal N}_h$ is a Poisson process 
with intensity $(h(t-))$. When several such processes ${\cal N}_{h}$ are used as below in the evolution equations, then the corresponding Poisson processes ${\cal P}$ used are assumed to be independent. The equations of evolution can then be written as, 
\[
\begin{cases}
 \diff S_{d-1}^N(t)={\cal N}_{d\mu(F_N-S^N_{d-1}-X_0^N)}(\diff t)-{\cal N}_{\mu S^N_{1}}(\diff t)
\\ \hspace{55mm} -\ind{S^N_{d-2}(t-)=0,S^N_{d-1}(t-)>0}{\cal N}_{\lambda N}(\diff t), \\
\\ 
 \diff S_k^N(t)={\cal N}_{(k+1)\mu X^N_{k+1}}(\diff t){-}{\cal N}_{\mu S^N_{1}}(\diff t)
{-}\ind{S^N_{k-1}(t-)=0,S^N_k(t-)>0}{\cal N}_{\lambda N}(\diff t),
\end{cases}
\]
for $1\leq k\leq d-2$, with the convention that $(S^N_0(t))\equiv 0$ is the null process and also that
$(R_0(t))=(t)$. By integrating and compensating  these equations, one gets that 
\begin{equation}\label{deveq}
S_k^N(t)=Z_{k}^N(t)-\lambda N(R^N_{k-1}(t)-R^N_{k}(t)),\quad 1\leq k\leq d-1,
\end{equation}
and the first coordinates $(X_0^N(t))$ satisfies the relation 
\begin{equation}\label{X0}
X_0^N(t)=\mu\int_0^t S_1^N(u)\,\diff u +U_0^N(t),
\end{equation}
with
\begin{align*}
Z_{d-1}^N(t)&=d\mu \int_0^t\left[F_N-S^N_{d-1}(u)-X^N_0(u)\right]\,\diff u-\mu\int_0^t S^N_{1}(u)\,\diff u+U_{d-1}^N(t)\\
Z_k^N(t)&=(k+1)\mu\int_0^t (S^N_{k+1}(u)-S^N_{k}(u))\,\diff u-\mu\int_0^t S^N_{1}(u)\,\diff u+U_k^N(t),
\end{align*}
for $1\leq k\leq d-2$, 
where the $(U^N(t))=(U_k^N(t), 1\leq k\leq d-1)$ are the martingales associated to the jumps of these
processes,
for $1\leq k\leq d-2$,
\begin{multline*}
U_k^N(t)=\int_0^t \left[{\cal N}_{\mu(k+1) X^N_{k+1}}(\diff u)-\mu(k+1) X^N_{k+1}(u)\,\diff u\right]\\
{-} \!\!\int_0^t  \!\!\left[{\cal N}_{\mu X^N_{1}}(\diff u){-}\mu X^N_{1}(u)\diff u\right]{-}
\int_0^t \ind{S^N_{k-1}(u{-}){=}0,S^N_{k}(u{-}){>}0}\left[{\cal N}_{\lambda N}(\diff u){-}\lambda N\diff u\right], 
\end{multline*}
and its increasing process is given by
\begin{multline}\label{incM}
\croc{U_k^N}(t)=\mu(k+1)\int_0^t  X^N_{k+1}(u)\,\diff u+\mu \int_0^t X^N_{1}(u)\,\diff u\\
+\lambda N\int_0^t \ind{S^N_{k-1}(u)=0,S^N_{k}(u)>0}\,\diff u.
\end{multline}
The martingales $(U^N_{0}(t))$ and $(U^N_{d-1}(t))$ have similar expressions,
\begin{multline*}
U_{d-1}^N(t)=\int_0^t \left[{\cal N}_{d\mu(F_N{-}S^N_{d-1}-X_0^N )}(\diff u)-d\mu(F_N{-}S^N_{d-1}(u){-}X_0^N(u))\diff u\right]\\
{-} \!\!\int_0^t \left[{\cal N}_{\mu X^N_{1}}(\diff u){-}\mu X^N_{1}(u)\diff u\right]{-}
 \!\!\int_0^t \!\!\ind{S^N_{d{-}2}(u{-}){=}0,S^N_{d-1}(u{-})>0}\left[{\cal N}_{\lambda N}(\diff u){-}\lambda N\diff u\right], 
\end{multline*}
with
\begin{multline*}
\croc{U_{d-1}^N}(t)=
d\mu \int_0^t\left(F_N{-}S^N_{d-1}(u){-}X_0^N(u)\right) \diff u\\
{+} \int_0^t \mu X^N_{1}(u)\diff u{+}\lambda N\int_0^t \ind{S^N_{d{-}2}(u{-}){=}0,S^N_{d-1}(u{-})>0}\diff u, 
\end{multline*}
and
\[
U_{0}^N(t)=\int_0^t \left[{\cal N}_{\mu S_1^N}(\diff u)-\mu S_1^N(u)\right]\,\diff u, \text{ with } \croc{U_{0}^N}(t)=\mu  \int_0^t S_1^N(u)\,\diff u.
\]
\subsection*{A Generalized Skorohod Problem Representation}
For $h=(h_i)$ an element of ${\cal D}(\R_+,\R^{d-1})$, $\eta>0$ and $F\in\N$, denote
\begin{equation}\label{Gdef}
\begin{cases}
&\displaystyle G_1(h,F,\eta)(t)=\mu\int_0^t (2h_{2}(u)-3h_{1}(u))\,\diff u-\eta t,\\
&\displaystyle G_k(h,F,\eta)(t)=\mu\int_0^t ((k+1)h_{k+1}{-}(k+1)h_k(u){-}h_1(u))\diff u,\, 1{<} k{<}d{-}1,\\
&\displaystyle G_{d-1}(h,F,\eta)(t)=d\mu \int_0^t\left[F{-} h_{d-1}(u){-}\mu\!\!\int_0^u\!\! h_1(v)\diff v\right]\,\diff u{-}\mu\int_0^t \!\!h_{1}(u)\diff u,
\end{cases}
\end{equation}
and $G(h,F,\eta)=(G_k(h,F,\eta),1\leq k\leq d-1)$. 
Equations~\eqref{deveq} and~\eqref{X0} give the relations
\begin{multline}\label{GSPevol}
S^N(t)=G\left(S^N,F_N,\lambda N\right)(t)+U^N(t)\\-d\mu \int_0^t U_0^N(u)\,\diff u \cdot e_{d-1}+\lambda N (I-P)R^N(t),
\end{multline}
where $P$ is the matrix $P=(P_{ij}, 1\leq i,j\leq d-1)$ whose non zero coefficients are the
$P_{i,i-1}=1$ for $2\leq i\leq d-1$. 

In other words, for a fixed $N$, the couple $(S^N, \lambda N R^N)$ is the solution of the generalized Skorohod
problem associated to the matrix $P$ and the functional 
\begin{equation}\label{Gbar}
\overline{G}: h\to G\left(h,F_N,\lambda N\right)+U^N-d\mu \int_0^\cdot U_0^N(u)\,\diff u \cdot e_{d-1}.
\end{equation}
See the appendix for the definition and a result of existence and uniqueness. 
\section{First Order Asymptotic Behavior}\label{Fluid}
In this section, the asymptotic behavior of the sequence of processes $(X_k^N(t)/N)$ at the ``normal'' time scale is investigated. As a consequence, it is shown that if $\lambda>\beta d\mu$ then the network is stable at the fluid level, i.e. the fraction of lost files  is $0$ at any time. Otherwise, a positive fraction of files is lost, an explicit expression for this quantity is provided. 

More precisely the convergence of the sequence of stochastic processes
\[
\left(\frac{X_k^N(t)}{N}, 0\leq k\leq d\right),
\]
is investigated. One first shows that this sequence is tight and the limit is identified as the solution of a deterministic generalized Skorohod problem. An explicit computation of this limit concludes the section. 

\subsection*{Tightness}
Due to Assumption~\eqref{beta}, there exists some  constant $C_0$ such that the relation $F_N\leq C_0 N$ holds  for all $N$.  Since
$0\leq X_k^N(t)\leq F_N$ for any $0\leq k\leq d-1$ and $t\geq 0$, Relation~\eqref{incM} gives the existence of a constant $C_1$ such that
\begin{equation}\label{Cdineq}
\E\left(U_{k}^N(t)^2\right)=\E\left(\croc{U_k^N}(t)\right)\leq C_1N t, \quad\forall 1\leq k< d-1 \text{ and } t\geq 0.
\end{equation}
with Doob's Inequality one gets that, for $1\leq k\leq d-1$ and $\eps>0$,
\[
\P\left(\sup_{0\leq s\leq t} \frac{U_k^N(s)}{N}\geq \eps \right)\leq \frac{1}{(\eps N)^2}\E(U_{k}^N(t)^2)
\leq \frac{C_1t}{\eps^2 N}
\]
shows that, for $0\leq k\leq d-1$, the martingale $(U_k^N(t)/N)$ converges in distribution to $0$ uniformly on
compact sets.

For  $T>0$,  $\delta>0$ and  for  $Z$  a function  in  the  space  $\mathcal{D}(\R_+,\R)$  of
c\`adl\`ag functions, i.e. right continuous functions  on $\R_+$ with left limits at every
point, define $w_{Z}(\delta)$ as the modulus  of continuity of the process $(Z(t))$ on the
interval $[0,T]$,
\begin{equation}\label{modul}
w_{Z}(\delta)=\sup_{0\leq s\leq t\leq T,\,|t-s|\leq \delta} |Z(t)-Z(s)|.
\end{equation}
By using again the relation $X_k^N(t)\leq C_0 N$ for all $N\in\N$, $1\leq k\leq d-1$
and $t\geq 0$, the above equations and the convergence of the martingales to $0$ give that,
for any $\eps>0$ and $\eta>0$, there exists $\delta>0$ such that
\[
\P(w_{S_k^N/N}(\delta)\geq \eta)\leq \eps,\quad 
\P(w_{X_0^N/N}(\delta)\geq \eta)\leq \eps,\quad  \forall N\text{ and }1\leq k\leq d-1.
\]
This implies that the sequence of stochastic processes 
\[
\left(\frac{X_0^N(t)}{N},\frac{S^N(t)}{N}\right)=\left(\frac{X_0^N(t)}{N}, \frac{S_k^N(t)}{N}, 1\leq k\leq d-1\right)
\]
is tight and that any of its limiting points is almost surely a continuous processes. See Billingsley~\cite{Billingsley} for example.

\subsection*{Convergence}
Let $(x_0(t),(s_k(t),1\leq k\leq d-1))$  denote a  limiting point  of the sequence  $(X_0^{N_p}(t)/N_p,S^{N_p}(t)/N_p)$
associated to  some non-decreasing subsequence  $(N_p)$. By choosing an appropriate probability space,
it can be assumed that the convergence holds almost surely. By Equation~\eqref{X0}, one gets that
\[
x_0(t)=\mu \int_0^t s_1(u)\,\diff u,
\]
From Definition~\eqref{Gbar} of the functional
$\overline{G}$ and by  convergence of the sequence of processes $(X_0^{N_p}(t)/N_p,S^{N_p}(t)/N_p)$ and of the martingale $(M^{N_p}/N_p)$ to $0$,  one gets that the convergence
\[
\lim_{p\to+\infty} \frac{1}{N_p}\overline{G}\left(S^{N_p},F_{N_p},\lambda N_p\right)=G\left(S,\beta,\lambda\right)
\]
holds uniformly on compact sets, where $G$ is defined by Relation~\eqref{Gdef}. As it has been seen in the previous section, Equation~\eqref{GSPevol}, the couple $(S^N/N,R^N/N)$ is the
solution of a classical Skorohod problem associated to the matrix $P$ introduced in Equation~\eqref{GSPevol} and the free process $(\overline{G}(S^{N_p},F_{N_p},\lambda N_p)/N_p$. By continuity of the solutions of a classical Skorohod problem, see  Proposition~5.11 of Robert~\cite{Robert} for example, one concludes that $(S^N/N,R^N)$ converges to the solution $(S,R)$ of the Skorohod problem associated to $P$ and  $h\mapsto G(h,\beta,\lambda)$. Hence $(S,R)$ is the unique solution of the {\em generalized} Skorohod problem for the matrix $P$ and the functional $h\to G(h,\beta,\lambda)$. The convergence of the sequence $(S^N/N,X_0^N/N)$ has been therefore established.

\begin{theorem}\label{flth}
If $S(t)=(s_k(t),1\leq k\leq d-1)$ is the unique solution of the generalized Skorohod problem
associated to the matrix $P=(\ind{(i,j)=(i,i-1)}, 1\leq i,j\leq d-1)$ and the functional
$h\mapsto G(h,\beta,\lambda)$ defined by Equation~\eqref{Gdef}, then  the  sequence of processes
\[
\left(\frac{X_k^{N}(t)}{N},0\leq k\leq d\right)
\]
converges in distribution uniformly on compact sets to $(x_k(t))$ defined by
\begin{align*}
x_0(t)&=\mu\int_0^t s_1(u)\,\diff u,\quad x_1(t)=s_1(t),\\
x_k(t)&=s_k(t)-s_{k-1}(t),\,2\leq k\leq d-1,\\
x_d(t)&=\beta-s_{d-1}(t)-x_0(t).
\end{align*}
\end{theorem}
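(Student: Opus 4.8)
Most of the work has in fact been done in the two preceding subsections; what remains is to assemble tightness, identification of the limit points, and uniqueness of the generalized Skorohod problem, and then to transfer the statement to the coordinates $(X_k^N/N)$. First I would record that the sequence $(X_0^N/N,S^N/N)$ is tight with almost surely continuous limit points: this follows from the uniform bound $X_k^N(t)\leq C_0N$, the second-moment estimate~\eqref{Cdineq}, Doob's inequality (which forces the rescaled martingales $(U_k^N/N)$, $0\leq k\leq d-1$, to converge to $0$ uniformly on compact sets), and the resulting modulus-of-continuity control $\P(w_{S_k^N/N}(\delta)\geq\eta)\leq\eps$, $\P(w_{X_0^N/N}(\delta)\geq\eta)\leq\eps$.

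Next, fix a limit point $(x_0,(s_k))$ along a subsequence $(N_p)$; after a change of probability space the convergence may be taken almost sure and uniform on compacts. Passing to the limit in~\eqref{X0} gives $x_0(t)=\mu\int_0^t s_1(u)\,\diff u$. For the $(s_k)$, the key input is Relation~\eqref{GSPevol}: it exhibits $(S^N/N,R^N)$ as the solution of a \emph{classical} Skorohod problem for the matrix $P$ (which is nilpotent, $P^{d-1}=0$, so $I-P$ is invertible — this is exactly the regime covered by the existence/uniqueness result of the appendix) with free process $\overline{G}(S^N,F_N,\lambda N)/N$. The martingale contribution $U^N/N$ and the term $d\mu\int_0^\cdot U_0^N(u)\,\diff u/N\cdot e_{d-1}$ vanish uniformly on compacts, $F_N/N\to\beta$ by~\eqref{beta}, and each $G_k(h,F,\eta)$ depends continuously (uniformly on compacts) on $(h,F,\eta)$ since it is built from $h$ and its iterated integrals; hence the free process converges uniformly on compacts to $G(S,\beta,\lambda)$. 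By continuity of the solution map of the classical Skorohod problem (Proposition~5.11 of Robert~\cite{Robert}), $(S^{N_p}/N_p,R^{N_p})$ converges to the solution $(S,R)$ of the Skorohod problem for $P$ with the fixed driving path $G(S,\beta,\lambda)$, i.e. $(S,R)$ solves the generalized Skorohod problem for $P$ and the functional $h\mapsto G(h,\beta,\lambda)$.

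Now I would invoke uniqueness of that generalized Skorohod problem (appendix): every limit point of $(X_0^N/N,S^N/N)$ coincides with the deterministic pair $(x_0,S)$, so the whole sequence converges in distribution, uniformly on compacts, to this limit. Finally the coordinates are recovered through the conservation identities $X_1^N=S_1^N$, $X_k^N=S_k^N-S_{k-1}^N$ for $2\leq k\leq d-1$, and $X_d^N=F_N-X_0^N-S_{d-1}^N$, together with $F_N/N\to\beta$; the continuous mapping theorem then yields the convergence of $(X_k^N/N,0\leq k\leq d)$ to $(x_k(t))$ with the stated formulas $x_1=s_1$, $x_k=s_k-s_{k-1}$, $x_d=\beta-s_{d-1}-x_0$.

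The delicate point is the middle step: the ``free process'' in~\eqref{GSPevol} depends on the unknown path $S^N$ itself, so one must be sure that this apparent circularity is harmless — it is, because the a priori tightness and the uniform bound $X_k^N\leq C_0N$ already provide enough compactness to pass to the limit inside the functional $G$ — and one must check that the matrix $P$ is within the scope of the appendix's existence/uniqueness theorem, so that the limiting generalized Skorohod problem genuinely pins the limit down and the subsequential argument can be closed.
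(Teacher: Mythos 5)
Your proposal is correct and follows essentially the same route as the paper: tightness via the martingale bounds and modulus of continuity, identification of limit points through the representation~\eqref{GSPevol} and the continuity of the classical Skorohod map (Proposition~5.11 of Robert~\cite{Robert}), uniqueness from the generalized Skorohod problem of the appendix, and recovery of the coordinates via the conservation identities. Your explicit remarks on the nilpotency of $P$ and on the apparent circularity in the free process are points the paper leaves implicit, but they do not change the argument.
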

If the limiting processes is uniquely determined as the solution of a Skorohod problem, it is not always easy to have an explicit representation of the solution of a Skorohod problem. The classical example of Jackson networks, see Chen and Mandelbaum~\cite{Chen}, shows that this is not always easy to have an explicit expression for the solutions of these problems in dimension greater than $2$. The linear topology of the network simplifies this question as the following proposition shows. 
\begin{prop}[Characterization of fluid limits]\label{charflprop}\ 
\begin{enumerate}
\item  $2\mu\beta<\lambda<d\mu\beta$.\\ Let $p=\lfloor \rho/\beta\rfloor$ with $\rho=\lambda/\mu$. the fluid limits $(s(t))=(s_1(t),\dots,s_d(t))$ of Theorem~\ref{flth} are defined as follows. 
There exist a sequence $(t_k)$,
\[
0=t_d<t_{d-1}<\dots<t_{p+1}<t_p=\infty,
\]
such that, for all $l=d-1,\ldots, p$ and for $t_{l+1} \leq t \leq t_{l}$,
\[
\begin{cases}
s_k(t) = 0,  \qquad 1\leq k \leq l{-}1,\\
s_{l}(t) = (l+1)\beta-\rho+\xi_{l,1} e^{-\mu t}+\sum_{i=l+2}^d\xi_{l,i}e^{-\mu i t},\\
s_k(t) = \beta\left(1- \sum_{i=k+1}^d\alpha_{k,i} e^{-i\mu t}\right),  \qquad l+1 \leq k \leq d,
\end{cases}
\]
where $\alpha_{d,d}=1$ and, for  $ j>l{+}1$
\[
\alpha_{l,j} =\frac{l{+}1}{l{+}1{-}j} \alpha_{l+1,j},\quad
\alpha_{l,l+1} = e^{(l+1)\mu t_l} \left( 1- \frac{\rho}{l\beta} - \sum_{k=l+2}^d \alpha_{l,k} e^{- k \mu t_l}  \right),
\]
\[
\xi_{l,j}=\frac{\beta (l+1)}{j-1}\alpha_{l+1,j},
\quad
\xi_{l,1}=-\left((l+1)\beta-\rho+\sum_{j=l+2}^d\xi_{l,j}e^{-\mu (j-1)t_{l+1}}\right),
\]
with $\alpha_{l,l}=0$ and $t_l$ is the unique solution of  $s_{l}(t)={\lambda}/{(l\mu)}$.
\item[]
\item   $\lambda>d\mu\beta$.\\ For all $t\geq 0$, $(x_1(t),\dots,x_d(t))=(0,\dots,0,\beta)$. 
\end{enumerate}
\end{prop}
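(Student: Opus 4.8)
The plan is to exploit the uniqueness statement of Theorem~\ref{flth}: the fluid limit is \emph{the} solution of the generalized Skorohod problem associated with $P$ and the functional $h\mapsto G(h,\beta,\lambda)$, so it suffices to exhibit one solution and check that it is given by the formulas above. For part~(2), $\lambda>d\mu\beta$, I would take $s\equiv(0,\dots,0)$ together with the regulators $r_1=\dots=r_{d-2}=\mathrm{id}_{\R_+}$ and $r_{d-1}(t)=(1-d\mu\beta/\lambda)t$: for $h\equiv0$ one has $G_k(h,\beta,\lambda)\equiv0$ for $1<k<d-1$, $G_1(h,\beta,\lambda)(t)=-\lambda t$ and $G_{d-1}(h,\beta,\lambda)(t)=d\mu\beta t$, so the fluid version $s=G(s,\beta,\lambda)+\lambda(I-P)r$ of Equation~\eqref{GSPevol} holds coordinate by coordinate; the $r_k$ are continuous, nonnegative and nondecreasing --- this is precisely where $\lambda>d\mu\beta$ is used, namely for $r_{d-1}$ --- and increase only on $\{s_k=0\}=\R_+$. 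Uniqueness and the change of variables of Theorem~\ref{flth} then give $(x_1,\dots,x_d)\equiv(0,\dots,0,\beta)$.

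For part~(1), with $\rho=\lambda/\mu$ and $p=\lfloor\rho/\beta\rfloor\in\{2,\dots,d-1\}$, I would construct the solution by a finite downward recursion on the phases $l=d-1,d-2,\dots,p$, the $l$-th phase being the interval $[t_{l+1},t_l]$ (with $t_d=0$) on which $s_k\equiv0$ for $k<l$, $s_l>0$, and $s_{l+1},\dots,s_d$ evolve without reflection. On such a phase $p\ge2$ forces $s_1\equiv0$, hence $x_0\equiv0$ (this is the only place $\lambda>2\mu\beta$ is used), and Equations~\eqref{deveq}--\eqref{X0} then reduce the coordinates $k\ge l+1$ to a decoupled triangular linear system $\dot s_k=(k+1)\mu(s_{k+1}-s_k)$, $l+1\le k\le d-2$, with $s_d\equiv\beta$, solved by $s_k(t)=\beta(1-\sum_{i=k+1}^d\alpha_{k,i}e^{-i\mu t})$, the recursion for the $\alpha_{k,i}$ being exactly the matching of the coefficients of the exponentials $e^{-i\mu t}$ and of the initial datum carried over from $t_{l+1}$; for the active coordinate, substituting the local-time identity $\lambda\dot r_{l-1}=\lambda-l\mu s_l$ forced by the constraint $s_{l-1}\equiv0$ --- this is the deterministic trace of the stochastic-averaging effect, part of the duplication capacity being spent re-duplicating the files that momentarily fall to level $l-1$ --- collapses the drift to $\dot s_l=(l+1)\mu s_{l+1}-\mu s_l-\lambda$, solved by $s_l(t)=(l+1)\beta-\rho+\xi_{l,1}e^{-\mu t}+\sum_{i=l+2}^d\xi_{l,i}e^{-i\mu t}$. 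One defines $t_l$ ($l>p$) as the first time $s_l$ reaches $\lambda/(l\mu)$ and carries $s_{l+1},\dots,s_d$ and $s_l\equiv0$ over as initial data to phase $l-1$; the recursion starts at $t_d=0$ where all $s_k$, $k\le d-1$, vanish.

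It then remains to verify that this piecewise-defined pair $(s,\lambda r)$ really is a solution of the generalized Skorohod problem. The identity $s=G(s,\beta,\lambda)+\lambda(I-P)r$ holds on each phase by construction and is preserved across $t_l$ because the formulas for $\alpha_{l,l+1}$ and $\xi_{l,1}$ are exactly the continuity conditions $s_l(t_l)=\lambda/(l\mu)$ and $s_l(t_{l+1})=0$ (so $s$ is in fact $C^1$ there). One must check: $s_k\ge0$, and more usefully $x_k=s_k-s_{k-1}\ge0$ for all $k$ and $t$ (positivity of the death-type system and of the active equation as long as $0\le s_l\le\lambda/(l\mu)$); that each $r_k$ is nondecreasing, the only nontrivial point being $\dot r_{l-1}=1-l\mu s_l/\lambda\ge0$, i.e. $s_l\le\lambda/(l\mu)$ throughout phase $l$; and that $r_k$ increases only on $\{s_k=0\}$, which is immediate from the construction. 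Finally one shows that $t_l$ is well-defined and finite for $l\ge p+1$ while $t_p=\infty$: this reduces to the comparisons $(l+1)\beta-\rho>\lambda/(l\mu)$ for $l\ge p+1$ (equivalent to $\rho<l\beta$, true since $l>p\ge\rho/\beta$) and $(p+1)\beta-\rho<\lambda/(p\mu)$ (equivalent to $p\beta<\rho$, i.e. $\lambda>p\mu\beta$, strict since the degenerate equality is excluded), together with monotonicity of $s_l$ on its phase; letting $t\to\infty$ in phase $p$ yields $s_k\to\beta$ for $k\ge p+1$ and $s_p\to(p+1)\beta-\rho$, i.e. $x_p\to(p+1)\beta-\rho$ and $x_{p+1}\to\rho-p\beta$, the local equilibrium of the overloaded case. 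Uniqueness of the generalized Skorohod problem and Theorem~\ref{flth} then conclude.

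The delicate part I expect is this last block: showing each $s_l$ is strictly increasing on its active phase, so that $t_l$ is unambiguously defined, and that all the $x_k$ stay nonnegative. The transitions at the $t_l$ are the subtle point, since for $l<d-1$ one has $\dot s_l(t_{l+1}^+)=0$, so the fact that $s_l$ actually leaves $0$, stays positive and increases must be read off the explicit solution and propagated by the downward induction, carrying along the sign information of the coupled linear system; the inequality $\rho/(p+1)<\beta$ is what keeps the induction going. By contrast the computations --- deriving the linear ODEs from~\eqref{deveq}--\eqref{X0}, integrating them, and checking that the stated recursions for the $\alpha_{k,i}$ and $\xi_{l,i}$ reproduce these solutions --- are routine.
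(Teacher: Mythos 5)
Your proposal is correct and follows essentially the same route as the paper: invoke uniqueness of the generalized Skorohod problem from the Appendix, then exhibit the solution phase by phase via a downward recursion on $l=d-1,\dots,p$, using the local-time identity $\lambda\dot r_{l-1}=\lambda-l\mu s_l$ on $\{s_{l-1}=0\}$ to reduce each phase to a triangular linear ODE system, and checking case (2) directly with the constant state and the regulators $r_{d-1}(t)=(1-d\mu\beta/\lambda)t$. If anything, you are more explicit than the paper about the verifications it leaves implicit (monotonicity of $s_l$ on its active phase, finiteness of $t_l$ for $l\geq p+1$ via $l\beta>\rho$, and $t_p=\infty$ via $p\beta<\rho$), which is a welcome addition rather than a divergence.
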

\begin{proof}
The vector $(s_k(t))$  is  solution of the following equation:
\begin{align*}
s_k(t) &= \mu(k+1) \int_0^t \left( s_{k+1}(u) - s_k(u) \right) \diff u - \mu \int_0^t s_1(u)\diff u - \lambda (r_{k-1}(t)-r_{k}(t)),\\
s_d(t) &=\beta  - \int_0^t \mu s_1(u) \diff u,
\end{align*}
where the $(r_k(t))$ are the reflection processes such that
$$
\int_0^t s_k(u) \diff r_k(u) = 0.
$$

By uniqueness of the solution of a generalized Skorohod problem given by Proposition~\ref{GSPprop} of the Appendix, it is enough to exhibit a solution to the above equations. 

We assume the conditions of the case~(1) of the proposition. 
We will prove in fact that there exists $t_d=0< t_{d-1} <t_{d-2} < \dots < t_p < t_{p-1}=+\infty$ such that,
for all $p\leq l \leq d-1$ and $t_{l+1}<t<t_l$, the $s_k$ and the $t_k$ have the
following equations:
\begin{equation}
\begin{cases}
\begin{aligned}
&s_k(t) = 0, && r_k(t)=t, \qquad 1\leq k \leq l-2,\\
&s_{l-1}(t) = 0,  &&\dot{r}_{l-1}(t)= 1 - l\mu/\lambda s_{l}(t), \\
&\dot{s}_l(t) = \mu(l+1) s_{l+1}(t) - \mu s_l(t) - \lambda,  &&\dot{r}_l(t), = 0 \\
&\dot{s}_k(t) = \mu(k+1) (s_{k+1}(t)-s_k(t)), && \dot{r}_k(t)=0, \qquad l+1\leq k \leq d-1,\\
&s_d(t) = \beta,  &&r_d(t) = 0.
\end{aligned}
\end{cases}
\label{eq:system}
\end{equation}
The $t_k$ are defined such that $s_k(t_k) = \lambda/(\mu k)$.

We start with the case $d-1$. It is easy to check
that  ($(s_k)$, $(r_k)$) defined by the following equations is the solution of
the generalized Skorohod problem,
\[
\begin{cases}
&s_k(t)=0,\quad  r_k(t)=t,\qquad \text{for } 1\leq k\leq d-3,\\
&\displaystyle s_{d-2}=0, \qquad r_{d-2}(t) = t - \frac{(d-1)\mu}{\lambda} \int_0^ts_{d-1}(u)\diff u, \\
&s_{d-1}(t)= \left(d \beta -{\lambda}/{\mu}\right)(1-e^{-\mu t}),\quad r_{d-1}(t) = 0.
\end{cases}
\]
This is valid for all $0\leq t < t_{d-1}$ with
$$
  t_{d-1} = \frac{1}{\mu} \log\left(\frac{d\beta - \rho}{d \beta  - d\rho/(d-1)} \right).
$$

Now, we proceed by using a recursion. Assume that there exists $l>p$ such that
the system of equation \eqref{eq:system} is verified until $t_l$. Moreover, we assume
that, for all $k \geq  l$ and  $t_k\leq t\leq t_{k-1}$,
$$
s_k(t) = \beta\left(1-\sum_{i=k+1}^d \alpha_{k,i} e^{-\mu i t}\right);
$$
and
$$
s_{k-1}(t) = k\beta-\rho+\xi_{k-1,1} e^{-\mu t}+\sum_{i=k+1}^d\xi_{k-1,i}e^{-\mu i t};
$$
and $t_{k-1}$ is the only solution of
$$
s_{k-1}(t_{k-1})=\frac{\lambda}{(k-1)\mu}.
$$

We define
$$
\alpha_{l,i} =\frac{l+1}{l+1-i} \alpha_{l+1,i} ,\text{ for } d\ge i>l+1,
$$
$$
\alpha_{l,l+1} = e^{(l+1)\mu t_l} \left( 1-\frac{\rho}{l\beta} - \sum_{k=l+2}^d \alpha_{l,k} e^{- k \mu t_l}  \right),
$$
$$
\xi_{l-1,i}=\frac{\beta l}{i-1}\alpha_{l,i}, \text{ for }d\ge i>l+1,
$$
and
$$
\xi_{l-1,1}=-\left(l\beta-\rho+\sum_{i=l+1}^d\xi_{l-1,i}e^{-\mu (i-1)t_l}\right).
$$
It is easy to check that $s_{l}$ is then solution of the equation
\[
\dot{s}_l = \mu(l+1) (s_{l+1}(t)-s_l(t))
\]
when $t\ge t_l$ and $s_{l-1}$ is the solution of the equation
$$
\dot{s}_{l-1}(t) = \mu l s_{l}(t) - \mu s_{l-1}(t) - \lambda,
$$
when $t_{l-1}\ge t\ge t_{l}$, $t_{l-1}$ is the solution of the equation
$$
s_{l-1}(t_{l-1})=\frac{\lambda}{(l-1)\mu}.
$$
The recursion is proved and therefore the assertion of case~(1) of the proposition. 

Concerning the case~(2), one has only to check that  the couple
\[
\begin{cases}
(x_1(t),\dots,x_d(t))\stackrel{\text{def.}}{=}(0,\dots,0,\beta)\\
(r_1(t),\ldots,r_{d-2}(t),r_{d-1}(t),r_{d}(t))\stackrel{\text{def.}}{=}\left(t,\ldots,t,\left(1-{d\mu\beta}/{\lambda}\right)t,0\right)
\end{cases}
\]
is indeed the solution of the generalized Skorohod problem.
\end{proof}
The following corollary shows that in the overloaded cases, asymptotically, there is an equilibrium where most of files will have either $p$ or $p+1$ copies for some convenient $p$. This situation is investigated in Section~\ref{localsec}. 
\begin{corollary}[Stable Fluid State of the Overloaded System]\label{fluid2}
In the case~(1) of Proposition~\ref{charflprop}, then 
\[
\lim_{t\to+\infty}(x_p(t),x_{p+1}(t))= ((p+1)\beta -\rho, \rho - p \beta),
\]
and $x_k(t) \to 0$ as $t\to+\infty$ for all $1\leq k\leq d$, $k\not\in \{p,p+1\} $.
\end{corollary}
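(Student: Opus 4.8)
The plan is to read the limits directly off the explicit fluid trajectory of Proposition~\ref{charflprop}(1) and transport them to the coordinates $(x_k)$ through the change of variables of Theorem~\ref{flth}. First I would observe that case~(1) means $\rho=\lambda/\mu>2\beta$, so $p=\lfloor\rho/\beta\rfloor\in\{2,\dots,d-1\}$ (the upper bound because $\rho<d\beta$); in particular both candidate limits $(p{+}1)\beta-\rho$ and $\rho-p\beta$ are nonnegative and the indices $p-1$, $p$, $p+1$ are in range. Moreover every phase index $l$ used in the construction satisfies $l\geq p\geq 2$, and on phase $l$ one has $s_k\equiv 0$ for $1\leq k\leq l-1$; hence $s_1\equiv 0$ on $\R_+$, so that $x_0(t)=\mu\int_0^t s_1(u)\,\diff u\equiv 0$. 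This already settles the lost-files coordinate.

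Next I would restrict to the terminal phase $t\geq t_{p+1}$, where Proposition~\ref{charflprop} gives $s_k(t)=0$ for $1\leq k\leq p-1$, $s_p(t)=(p{+}1)\beta-\rho+\xi_{p,1}e^{-\mu t}+\sum_{i=p+2}^d\xi_{p,i}e^{-i\mu t}$, and $s_k(t)=\beta\bigl(1-\sum_{i=k+1}^d\alpha_{k,i}e^{-i\mu t}\bigr)$ for $p+1\leq k\leq d-1$. Since every exponent $-i\mu$ with $i\geq 1$ is strictly negative and the $\xi_{p,i}$, $\alpha_{k,i}$ are fixed finite constants, letting $t\to+\infty$ gives $s_k(t)\to 0$ for $k\leq p-1$, $s_p(t)\to(p{+}1)\beta-\rho$ and $s_k(t)\to\beta$ for $p+1\leq k\leq d-1$. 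Feeding these into the identities $x_1=s_1$, $x_k=s_k-s_{k-1}$ for $2\leq k\leq d-1$ and $x_d=\beta-s_{d-1}-x_0$ of Theorem~\ref{flth} yields $x_k(t)\to 0$ for $1\leq k\leq p-1$, then $x_p(t)\to(p{+}1)\beta-\rho$, $x_{p+1}(t)\to\beta-((p{+}1)\beta-\rho)=\rho-p\beta$, $x_k(t)\to 0$ for $p+2\leq k\leq d-1$, and $x_d(t)\to\beta-\beta-0=0$. The degenerate subcase $p+1=d$ is handled by reading $x_{p+1}=x_d$ off the last identity: there $s_{d-1}=s_p\to(p{+}1)\beta-\rho=d\beta-\rho$, giving $x_d(t)\to\rho-(d{-}1)\beta=\rho-p\beta$, consistently.

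There is no genuinely hard analytic step here: the corollary is an asymptotic evaluation of an explicit piecewise-exponential formula. The only points needing care are the bookkeeping that yields $s_1\equiv 0$ — hence $x_0\equiv 0$ and the correct value of $x_d$ in the limit — the verification that $p$ really lies in $\{2,\dots,d-1\}$ under the case~(1) hypothesis, and the edge case $p+1=d$, where $x_{p+1}$ must be computed from $x_d=\beta-s_{d-1}-x_0$ rather than from $x_{p+1}=s_{p+1}-s_p$.
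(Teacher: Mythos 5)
Your proposal is correct and is essentially the paper's own argument: the corollary is stated without a separate proof precisely because it follows by letting $t\to+\infty$ in the explicit piecewise-exponential formulas of Proposition~\ref{charflprop}(1) on the terminal phase $l=p$ and translating back to the $x_k$ via Theorem~\ref{flth}. Your bookkeeping (that $s_1\equiv 0$ forces $x_0\equiv 0$, that $p\in\{2,\dots,d-1\}$ under the case~(1) hypothesis, and the edge case $p+1=d$ read off $x_d=\beta-s_{d-1}-x_0$) is accurate and complete.
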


\vspace{-7mm}

\begin{figure}[ht]
\centering
\scalebox{1}{\includegraphics[height=8cm]{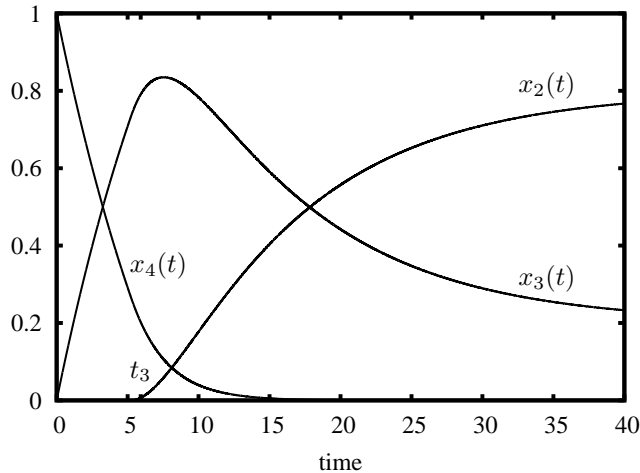}}
\put(-212,60){$t_3$}
\put(-212,100){${x_4(t)}$}
\put(-65,95){${x_3(t)}$}
\put(-65,168){${x_2(t)}$}
 \caption{Fluid Limits of an Overloaded Network, $2\beta\mu{<}\lambda{<}3\beta\mu$ with $d=4$, $\mu=0.1$, $\lambda=0.22$, $\beta=1$. In this case $t_3=5.23$.}  \label{figfluid}
\end{figure}

\subsection*{Example}
To illustrate the results of Proposition~\ref{charflprop}, one considers the case $d=4$ and with the condition $2\beta\mu<\lambda<3\beta\mu$. One easily gets that
\[
t_3=  \frac{1}{\mu}\log\left(\frac{3}{4}\cdot\frac{4\beta-\rho}{3\beta-\rho}\right)
\]
and
\[
s_3(t)=
\begin{cases}
 (4\beta-\rho)(1-e^{-\mu t})&\text{ if } t\leq t_3,\\
\displaystyle\beta-\frac{27}{256} \frac{(4\beta-\rho)^4}{(3\beta-\rho)^3}e^{-4\mu t}&\text{ if } t\geq t_3,
\end{cases}
\]
and for $t<t_3$, $s_2(t)=0$ and if $t>t_3$,
\[
s_2(t)= (3\beta-\rho) - (4\beta-\rho)e^{-\mu t}  -\frac{27}{256} \frac{(\rho-4\beta)^4}{(\rho-3\beta)^3} e^{-4\mu t}.
\]
Finally $s_1(t)=x_0(t)=0$, for all $t\geq 0$. Figure~\ref{figfluid} presents a case with $d=4$ and where, asymptotically, a local equilibrium holds:  files have either $2$ or $3$ copies as $t$ goes to infinity.

\section{Evolution of  Stable Network}\label{AverageSec}
In this section, the asymptotic properties of the sequence of processes 
\[
(X^N(t))=(X_0^N(t),X_1^N(t),\ldots,X_{d-1}^N(t))
\]
are investigated under the condition $\rho=\lambda/\mu>d\beta$ and with the initial state $X^N(0)=(0,\ldots,0,F_N)$. Section~\ref{Fluid} has shown that, in this case, the system is stable at the first order, i.e. that the fraction of lost files is $0$. This does not change the fact that the system is transient with one absorbing state $(F_N,0,\ldots,0)$.  The purpose of this section is of showing that the decay of this networks occurs on the time scale $t\mapsto N^{d-1}t$.

The section is organized as follows, preliminary results, Lemma~\ref{propdom} and Proposition~\ref{lemLGN} partially based on couplings show that the coordinates in the middle, i.e. with index between $1$ and $d-1$, cannot be very large on any time scale. In a second step,  Proposition~\ref{prop1} and Proposition~\ref{lmtitg} show that the flows between the coordinates of the Markov process are ``small''. Proposition~\ref{lmtitg} is the crucial technical result of this section. Finally, the asymptotic study of a random measure on $\N\times\R_+$ gives the last element to establish the main result, Theorem~\ref{theodec}, on the evolution of the network on the time scale $t\mapsto N^{d-1}t$. 

\medskip
\noindent
{\bf Stochastic Differential Equations}\\
The SDE satisfied by the process $(X_k^N(t))$ are recalled. As before, if $1\leq k\leq d$, $S_k^N(t)=X_1^N(t)+\cdots+X_{k}^N(t)$ and  the convention that $S_0^N\equiv 0$ and $S_{-1}^N\equiv -1$, then
\begin{align}
&X_0^N(t)=\mu\int_0^t X_1^N(u)\,\diff u +M_0^N(t),\label{SDE11}\\
&X_{k}^N(t)=\mu (k{+}1)\int_0^t X_{k+1}^N(u)\,\diff u-\mu k\int_0^t X_{k}^N(u)\,\diff u\label{SDE1k}\\
&+\lambda N\int_0^t \ind{S_{k-2}^N(u)=0,X_{k-1}^N(u)>0}\,\diff u-\lambda N\int_0^t \ind{S_{k-1}^N(u)=0,X_{k}^N(u)>0}\,\diff u\notag \\
&+M_{k}^N(t),\quad \text{ for } 1\leq k\leq d-1,\notag
\end{align}
where, for $0\leq k\leq d-1$, $(M_k^N(t))$ is a square integrable martingale whose previsible increasing process is given by 
\begin{align}
&\croc{M_0^N}(t)=\mu\int_0^t X_1^N(u)\,\diff u,\label{SDEV1}\\ 
&\croc{M_k^N}(t)=\mu (k{+}1)\int_0^t X_{k+1}^N(u)\,\diff u+\mu k\int_0^t X_{k}^N(u)\,\diff u\label{SDEVk}\\
&+\lambda N\int_0^t \ind{S_{k-2}^N(u)=0,X_{k-1}^N(u)>0}\,\diff u+\lambda N\int_0^t \ind{S_{k-1}^N(u)=0,X_{k}^N(u)>0}\,\diff u\notag
\end{align}
\subsection{Some Technical Results} 
We start with two preliminary results on a coupling of the network. 
\begin{lemma}\label{lemLGN}
If $(L(t))$ is the process of the number of customer of an $M/M/1$ queue with arrival rate $\alpha$ and service rate $\gamma>\alpha$ and with initial condition $L(0)=x_0\in\N$  then, for the convergence in distribution of continuous processes,
\[
\lim_{N\to+\infty} \left(\frac{1}{N}\int_0^{Nt} L(u)\,\diff u\right)=\left(\frac{\alpha}{\gamma-\alpha}t\right).
\]
\end{lemma}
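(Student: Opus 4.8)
\medskip\noindent\textbf{Proof plan.} The statement is an ergodic-theorem result dressed up as a functional limit: since $\gamma>\alpha$ the $M/M/1$ queue $(L(t))$ is a positive recurrent Markov process, so the time average of $L$ over $[0,Ns]$ should converge to its stationary mean. The plan is therefore in two steps. First I would establish the convergence for a fixed time $t$ from the ergodic theorem; then I would upgrade it to uniform convergence on compact intervals, using that the integrated process is monotone in $t$ and that the limit is continuous. Because the limit is deterministic, uniform-on-compacts convergence in probability coincides with the announced convergence in distribution of continuous processes, so this suffices.

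For the first step I would recall that $(L(t))$ is an irreducible, positive recurrent birth-and-death chain on $\N$ whose stationary law $\pi$ is geometric with parameter $\alpha/\gamma$, so that $m:=\E_\pi(L)=\alpha/(\gamma-\alpha)<+\infty$. Decomposing the trajectory into i.i.d. excursions away from state $0$ and applying the renewal--reward theorem (which needs precisely $\E_\pi(L)<\infty$), one obtains that for every deterministic initial state $x_0\in\N$,
\[
\lim_{s\to+\infty}\frac1s\int_0^s L(u)\,\diff u = m\qquad\text{almost surely.}
\]
Setting $f_N(t):=\tfrac1N\int_0^{Nt}L(u)\,\diff u$, this says exactly that $f_N(t)\to mt$ almost surely, hence in probability, for each fixed $t\ge 0$.

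For the second step I would fix $T>0$, $\eps>0$ and a subdivision $0=t_0<t_1<\dots<t_M=T$ of mesh at most $\eps$. Since $L\ge 0$, each $f_N$ is non-decreasing, so for $t\in[t_i,t_{i+1}]$ one has $f_N(t_i)\le f_N(t)\le f_N(t_{i+1})$, and a short computation gives
\[
\sup_{0\le t\le T}\bigl|f_N(t)-mt\bigr|\ \le\ \max_{0\le i\le M}\bigl|f_N(t_i)-mt_i\bigr| + m\eps .
\]
The first term on the right is a finite maximum of quantities each tending to $0$ in probability, so $\limsup_N \P\bigl(\sup_{[0,T]}|f_N(t)-mt|>2m\eps\bigr)=0$; letting $\eps\to 0$ and then $T$ vary yields $\sup_{[0,T]}|f_N(t)-mt|\to 0$ in probability for every $T$, which is the asserted convergence.

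The only genuinely non-routine ingredient is the pointwise ergodic convergence, and even that is classical once $\pi$ has been identified; the monotonicity argument then makes the passage to the functional statement essentially automatic. If one wished to avoid invoking the ergodic theorem, an alternative is to derive a second-moment bound on $\int_0^{s}L(u)\,\diff u - ms$ directly from the generator via Dynkin's formula, but this is heavier and not needed. I expect the only delicate point is keeping the excursion/renewal-reward argument clean for an arbitrary fixed starting state $x_0$ rather than the stationary one, which a standard coupling with a stationary version settles.
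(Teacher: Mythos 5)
Your proof is correct, and it is essentially the argument the paper has in mind: the paper gives no details, deferring entirely to a standard reference (the end of the proof of Proposition~9.14 in Robert's book), and your combination of the ergodic/renewal--reward limit for the time average with the monotonicity of $t\mapsto\frac1N\int_0^{Nt}L(u)\,\diff u$ to upgrade pointwise to uniform-on-compacts convergence is the standard way to fill it in. No gaps.
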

\begin{proof}
The proof is standard, see the end of Proof of Proposition~9.14 page~272 of Robert~\cite{Robert} for example. 
\end{proof}
The next proposition presents an important property of the network. Roughly speaking it states that the $F_N$ files have either $0$ or $d$ copies on the time scale $t\mapsto N^{d-1}t$.  Coordinates with index between $1$ and $d-1$ of the vector $(X^N(t))$ remain small.
\begin{prop}[Coupling]\label{propdom}
Under the condition $d\beta\mu<\lambda$ and with the initial state  $X^N(0)=(0,\ldots,0,F_N)$, one can find a probabilistic space so that  the relation
\[
(d-1)X_1^N(t)+(d-2)X_2^N(t)+\cdots+X_{d-1}^N(t)\leq L_{0}(N t),\qquad \forall t>0,
\]
holds, where the vector $(X_k^N(t), 1\leq k\leq d{-}1)$ has the same distribution as the state of our network and  $(L_{0}(t))$ is the process of the number of customers of  an  $M/M/1$ queue with arrival rate $d\mu\beta_0$ and service rate $\lambda$ and  with the initial condition $L_{0}(0)=0$ for some $\beta_0$ satisfying $d\mu\beta_0<\lambda$. 

For all $i=1$, $2$, \ldots, $d{-}1$ and $\alpha>0$ then,  for the convergence in distribution of continuous processes,  the relation
\begin{equation}\label{serv13}
\lim_{N\to\infty}\left(\frac{X_{i}^N(N^{d-1}t)}{N^\alpha}\right)=0
\end{equation}
holds. 
\end{prop}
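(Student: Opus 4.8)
The plan is to reduce the multidimensional statement to a pathwise comparison with a stable $M/M/1$ queue. First I would introduce the weighted functional
\[
H^N(t)=\sum_{k=1}^{d-1}(d-k)X_k^N(t),
\]
which dominates each middle coordinate, since $(d-i)X_i^N(t)\le H^N(t)$ for $1\le i\le d-1$. The key preliminary step is to read off the effect of each transition of $(X^N(t))$ on $H^N$: a loss at a coordinate $k$ with $2\le k\le d-1$, as well as the loss $x\mapsto x+e_{d-1}$ at coordinate $d$, each increase $H^N$ by exactly $1$; a loss at coordinate $1$ decreases $H^N$ by $d-1$; and every duplication decreases $H^N$ by exactly $1$. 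Two features are crucial: the duplication mechanism fires at the \emph{constant} total rate $\lambda N$ precisely on the set $\{H^N>0\}$, and the total rate of the $+1$ transitions is
\[
\mu\sum_{k=2}^{d-1}kX_k^N+\mu d X_d^N\le \mu d\bigl(F_N-X_0^N-X_1^N\bigr)\le \mu d F_N.
\]
Under the assumption $d\mu\beta<\lambda$ one may fix $\beta_0$ with $\beta<\beta_0<\lambda/(d\mu)$, and then $F_N\le\beta_0 N$ for all $N$ large by Assumption~\eqref{beta}, so this rate is at most $d\mu\beta_0 N<\lambda N$.

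Next I would realize $(X^N(t))$ and the queue on one probability space. Take a Poisson process $\mathcal P_{\mathrm{dup}}$ of rate $\lambda N$, an independent Poisson process $\mathcal P_{\mathrm{arr}}$ of rate $d\mu\beta_0 N$, and one further independent Poisson process for the coordinate-$1$ losses. Drive the (unique active) duplication of the network by the atoms of $\mathcal P_{\mathrm{dup}}$ that occur on $\{H^N>0\}$, obtain the $+1$ transitions of $H^N$ by thinning $\mathcal P_{\mathrm{arr}}$ (accept an atom with probability equal to the current $+1$ rate divided by $d\mu\beta_0 N\le 1$, then attribute it to a loss type proportionally to the corresponding rates), and use the third process for the coordinate-$1$ loss; a check of the rates shows this realizes the law of the network. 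On the same space let $L_0$ be the $M/M/1$ queue with arrival rate $d\mu\beta_0$, service rate $\lambda$, empty at time $0$, arranged so that $\tilde L_0(t):=L_0(Nt)$ has arrival process $\mathcal P_{\mathrm{arr}}$ and service process $\mathcal P_{\mathrm{dup}}$. I would then track $D(t)=\tilde L_0(t)-H^N(t)$, which starts at $0$, and verify that no transition makes it negative: at an atom of $\mathcal P_{\mathrm{arr}}$, $\tilde L_0$ goes up by $1$ while $H^N$ goes up by $0$ or $1$; at an atom of the third process $H^N$ can only go down; at an atom of $\mathcal P_{\mathrm{dup}}$, if $H^N>0$ then $H^N$ and $\tilde L_0$ drop together by $1$, while if $H^N=0$ only $\tilde L_0$ drops, from the value $D=\tilde L_0\ge 1$ (the case $\tilde L_0=0<H^N$ being excluded by $D\ge 0$). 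Hence $D(t)\ge 0$ for all $t$, that is $H^N(t)\le L_0(Nt)$, which is the announced coupling.

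For the second assertion I would note that the coupling gives, for $1\le i\le d-1$,
\[
\sup_{0\le t\le T}\frac{X_i^N(N^{d-1}t)}{N^\alpha}\le\frac{1}{(d-i)N^\alpha}\sup_{0\le u\le N^dT}L_0(u),
\]
so it is enough to control the running maximum of the \emph{ergodic} queue $L_0$ (ergodic since $d\mu\beta_0<\lambda$). With $r:=\lambda/(d\mu\beta_0)>1$ and $h(k)=r^k$, the generator $\mathcal L$ of $L_0$ satisfies $\mathcal L h(k)=0$ for $k\ge 1$ and $\mathcal L h(0)=d\mu\beta_0(r-1)$, so that $r^{L_0(s)}-d\mu\beta_0(r-1)\int_0^s\ind{L_0(u)=0}\,\diff u$ is a martingale; optional stopping at the hitting time $\tau_m$ of level $m$ yields $\E\bigl(r^{L_0(s\wedge\tau_m)}\bigr)\le 1+d\mu\beta_0(r-1)s$, whence
\[
\P\Bigl(\sup_{0\le u\le s}L_0(u)\ge m\Bigr)=\P(\tau_m\le s)\le\bigl(1+d\mu\beta_0(r-1)s\bigr)r^{-m}.
\]
Taking $s=N^dT$ and $m=\lceil(d-i)N^\alpha\rceil$, the right-hand side tends to $0$ because $r^{-N^\alpha}$ beats every power of $N$; hence $\sup_{t\le T}X_i^N(N^{d-1}t)/N^\alpha\to 0$ in probability for every $T$, which is exactly the stated convergence in distribution to the null process.

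The step I expect to be the main obstacle is the construction of the coupling: realizing the genuinely multidimensional network and the one-dimensional queue on the same probability space so that the domination is pathwise, not merely in distribution. What makes it work is precisely the ``pushing'' structure — duplication acts at the fixed rate $\lambda N$ on $\{H^N>0\}$, exactly like the server of an $M/M/1$ queue — together with the uniform bound $\mu d F_N<\lambda N$ on the rate at which losses push $H^N$ up; the queue estimate in the last step is then routine Lyapunov/martingale bookkeeping.
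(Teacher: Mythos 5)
Your proof is correct and follows essentially the same route as the paper: the same weighted functional $(d-1)X_1^N+\cdots+X_{d-1}^N$, the same jump analysis showing upward jumps occur at rate at most $d\mu\beta_0 N<\lambda N$ and downward jumps at rate $\lambda N$ off zero, and the same pathwise domination by a stable $M/M/1$ queue. The only (harmless) difference is in bounding the running maximum of $L_0$ over a polynomial time horizon: you derive the exponential tail of the hitting time via an explicit $r^{L_0}$ martingale, whereas the paper cites the known asymptotics $\delta^K T_K\Rightarrow$ exponential from Robert's book.
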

\begin{proof}
The existence of $N_0$ and $\beta_0$ such that  $d\mu\beta_0<\lambda$ and $F_N\leq \beta_0 N$ for $N\geq N_0$ is clear. 
Define
\[
Z^N(t)=(d-1)X_1^N(t)+(d-2)X_2^N(t)+\cdots+X_{d-1}^N(t),
\]
then the possible jumps of $(Z^N(t))$ are either $1$, $-1$ or ${-}(d{-}1)$.  If $X^N(t)=(x_k)$,  jumps of size $1$ occur at rate
$\mu[2x_2+\cdots+(d-1)x_{d-1}+dx_d]\leq \mu dF_N\leq \mu d \beta_0 N$. Similarly jumps of size $-1$ occurs at rate $\lambda N$ provided that $Z^N(t)\not=0$. A simple coupling gives therefore  that $(Z_N(t))$ is upper bounded by an $M/M/1$ queue with service rate $\lambda N$ and arrival rate $\mu d \beta_0 N$. The first part of the proposition is proved. 

By ergodicity of the $M/M/1$ process $(L_0(t))$, one has, for the convergence in distribution,
\begin{equation}\label{eq1}
\lim_{N\to+\infty} \left(\frac{L_0(N^\kappa t)}{N^\alpha}\right)=0
\end{equation}
for all $\kappa>0$ and $\alpha>0$.  Indeed, if 
\[
T_K=\inf\{s\geq 0: L_0(s)\geq K\},
\]
then, if $\delta=d\mu\beta_0/\lambda$,  the random variable $\delta^K T_K$ is converging in distribution to an exponential random variable as $K$ goes to infinity. 
See Proposition~5.11 page~119 of  Robert~\cite{Robert} for example. 

For $T>0$ and $\eps>0$, one has
\[
\P\left(\sup_{0\leq s\leq T} \frac{L_0(N^\kappa t)}{N^\alpha} \geq \eps \right)
=\P\left(T_{\lceil \eps N^\alpha\rceil}\leq N^\kappa T\right)
\]
and since $\delta<1$, this last term is converging to $0$ as $N$ goes to infinity. Convergence~\eqref{eq1} has therefore been proved.  One concludes that the sequence of processes $(Z^N(N^{d-1}t)/N^\alpha)$ converges in distribution to $0$.  The proposition is proved. 
\end{proof}

\begin{prop}\label{prop1}
Under the condition $d\beta\mu<\lambda$ and  if $X^N(0)=(0,\ldots,0,F_N)$,
then,  for $1\leq k\leq d-1$ and any  $\gamma>0$.    one has
\begin{equation}\label{eqsaux2}
\lim_{N\to\infty}\left(\frac{1}{N^{k+\gamma}}\int_0^{N^{d{-}1} t}X_k^N(u)\, \diff u\right)=0.
\end{equation}
for the convergence in distribution of continuous processes, and, for any $t\geq 0$,
\begin{equation}\label{eqsaux3}
\lim_{N\to\infty}\frac{1}{N^{k+\gamma}}\int_0^{N^{d{-}1} t}\E\left(X_k^N(u)\right)\, \diff u=0.
\end{equation}
\end{prop}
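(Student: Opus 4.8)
The plan is to prove Relation~\eqref{eqsaux2} by induction on $k$, running from $k=d-1$ down to $k=1$, and to obtain Relation~\eqref{eqsaux3} simultaneously by keeping track of expectations throughout. The base mechanism is the same at every step: integrate the SDE~\eqref{SDE1k} for the coordinate $X_k^N$ over $[0,N^{d-1}t]$, rearrange to isolate $\mu k \int_0^{N^{d-1}t} X_k^N(u)\,\diff u$ on one side, and bound each of the remaining terms after dividing by $N^{k+\gamma}$. Explicitly, from~\eqref{SDE1k},
\[
\mu k \int_0^{N^{d-1}t} X_k^N(u)\,\diff u = \mu(k{+}1)\int_0^{N^{d-1}t} X_{k+1}^N(u)\,\diff u + \lambda N\int_0^{N^{d-1}t}\ind{S_{k-2}^N=0,X_{k-1}^N>0}\,\diff u - \lambda N\int_0^{N^{d-1}t}\ind{S_{k-1}^N=0,X_{k}^N>0}\,\diff u - X_k^N(N^{d-1}t) + M_k^N(N^{d-1}t).
\]
The terminal term $X_k^N(N^{d-1}t)$ is bounded by $F_N\leq C_0N$, hence negligible after dividing by $N^{k+\gamma}$ since $k\geq 1$. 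The martingale term is controlled by Doob's inequality together with the bound on its increasing process~\eqref{SDEVk}: since $\croc{M_k^N}(N^{d-1}t)\leq C(N\cdot N^{d-1}) = CN^d$ (using $X_j^N\leq F_N\leq C_0N$ and that the indicator terms contribute at most $\lambda N\cdot N^{d-1}t$), we get $\E(\sup_{s\leq N^{d-1}t}|M_k^N(s)|)=O(N^{d/2})$, which is $o(N^{k+\gamma})$ provided $d/2<k+\gamma$; for the low-index coordinates where this crude bound is not enough, one uses instead the sharper bound on $\int X_{k+1}^N$ coming from the induction hypothesis inside~\eqref{SDEVk}, so that $\croc{M_k^N}(N^{d-1}t)=o(N^{k+1+\gamma})$ and the martingale is $o(N^{(k+1+\gamma)/2})=o(N^{k+\gamma})$ for $k\geq 1$.

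The remaining three terms are the heart of the argument. The term $\mu(k{+}1)\int_0^{N^{d-1}t}X_{k+1}^N(u)\,\diff u$ is $o(N^{k+1+\gamma})$ by the induction hypothesis at level $k+1$ (the base case $k+1=d$ being handled separately: $\int_0^{N^{d-1}t}X_d^N(u)\,\diff u \leq F_N N^{d-1}t = O(N^d)$, which after division by $N^{d-1+\gamma}$ is still $O(N^{1-\gamma})$ — wait, this needs $\int X_d^N = o(N^{d-1+\gamma})$; this does NOT follow from the trivial bound, so the base case $k=d-1$ must instead be obtained directly). For $k=d-1$ one uses~\eqref{SDE1k} with the exact duplication rate $\mu d(F_N - S_{d-1}^N)$ together with the fact, from Proposition~\ref{propdom}, that $S_{d-1}^N(N^{d-1}t)$ is small relative to $N$ in the appropriate sense, combined with Proposition~\ref{prop1}'s own statement applied telescopically; more precisely $\int_0^{N^{d-1}t}X_{d-1}^N(u)\,\diff u$ is controlled because $X_{d-1}^N\leq Z^N \leq L_0(Nt)$ pointwise and $\int_0^{N^{d-1}t}L_0(Nu)\,\diff u = N^{d-2}\int_0^{N^{d-1}t}L_0(v)\,\diff v$ — here Lemma~\ref{lemLGN} gives $\frac1M\int_0^M L_0(v)\,\diff v\to \delta/(1-\delta)$, so $\int_0^{N^{d-1}t}L_0(v)\,\diff v = O(N^{d-1})$ and the whole quantity is $O(N^{d-2}\cdot N^{d-1})=O(N^{2d-3})$, which is $o(N^{d-1+\gamma})$ once $d\geq 2$. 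This same coupling bound, applied at each level, is in fact what makes every step work and also yields~\eqref{eqsaux3} by taking expectations (the coupling is almost sure, and $\E(L_0(Nt))$ is bounded uniformly by ergodicity of the $M/M/1$ queue).

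The two indicator terms $\pm\lambda N\int_0^{N^{d-1}t}\ind{\cdots}\,\diff u$ are the delicate point and I expect them to be the main obstacle: individually each is $O(N\cdot N^{d-1})=O(N^d)$, far too large, so one cannot bound them separately — one must exploit the near-cancellation between the "inflow" indicator $\ind{S_{k-2}^N=0,X_{k-1}^N>0}$ (duplication pushing mass from level $k-1$ up to $k$) and the "outflow" indicator $\ind{S_{k-1}^N=0,X_k^N>0}$ (duplication pushing mass from level $k$ up to $k+1$). The resolution is to not isolate them here but rather to re-sum the SDEs: consider $S_k^N = X_1^N+\cdots+X_k^N$, for which~\eqref{deveq} shows the duplication terms collapse to a single local-time difference $\lambda N(R_{k-1}^N - R_k^N)$, i.e. $\lambda N\int_0^{\cdot}(\ind{S_{k-1}^N=0}-\ind{S_k^N=0})\,\diff u$. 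Then $\lambda N R_k^N(N^{d-1}t) = \lambda N\int_0^{N^{d-1}t}\ind{S_k^N(u)=0}\,\diff u$ is bounded using that $S_k^N$ spends most of its time away from $0$ once there is a positive fluid mass at level $d$ — quantitatively, the excursions of $S_k^N$ above $0$ have length at least of order $N^{k-1}$ in distribution (it takes $k$ successive loss events, each at rate $O(N)$ when a coordinate is of order $1$... no — rather, bringing $S_k^N$ back to $0$ from a state with $S_k^N\geq 1$ requires the duplication mechanism to clear levels $1,\dots,k$, which happens on timescale $N^{-1}$ each, but a new file arrives at level $k$ only via a cascade of $d-k$ losses from level $d$...). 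Making this excursion-length estimate rigorous — equivalently, showing $\lambda N\int_0^{N^{d-1}t}\ind{S_k^N(u)=0}\,\diff u = o(N^{k+\gamma})$ — is exactly the content that Proposition~\ref{lmtitg} (announced as "the crucial technical result of this section") is designed to supply, and the present Proposition~\ref{prop1} should be read as an intermediate step feeding into it; thus the cleanest route is to prove~\eqref{eqsaux2} and the local-time bound jointly by a single downward induction, using at level $k$ the coupling bound $X_k^N\leq Z^N\leq L_0(Nt)$ to get the $O(N^{2d-3})$-type estimate directly for the integral of $X_k^N$ — which for $k=d-1$ already gives $o(N^{k+\gamma})$ — and then for smaller $k$ iterating~\eqref{SDE1k} with the martingale and terminal terms handled as above and the higher-level integral handled by induction, while the indicator terms are grouped into $S$-coordinates where~\eqref{deveq} applies. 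Taking expectations at each stage, and invoking uniform integrability from the $M/M/1$ comparison, delivers~\eqref{eqsaux3} in parallel at no extra cost.
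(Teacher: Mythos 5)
There is a genuine gap, and it comes from running the induction in the wrong direction. The paper's proof is a \emph{forward} induction, from $k=1$ up to $k=d-1$: the base case uses the equation~\eqref{SDE11} for $X_0^N$ (which contains no duplication indicators at all) to write $\mu\int_0^{N^{d-1}t}X_1^N(u)\,\diff u=X_0^N(N^{d-1}t)-M_0^N(N^{d-1}t)$ with $X_0^N\le F_N\le C_0N$, and the inductive step isolates $\mu(k{+}1)\int X_{k+1}^N$ on the left-hand side of~\eqref{SDE1k} so that everything on the right involves only coordinates of index at most $k$. In particular the two indicator terms, which you correctly identify as the critical ones, are disposed of by the elementary pointwise bound $\ind{S_{i-1}^N=0,\,X_i^N>0}\le X_i^N$ (the variables are integer-valued), so that $\lambda N\int\ind{\cdots}\,\diff u\le \lambda N\int X_i^N\,\diff u$ with $i\in\{k-1,k\}$, and after division by $N^{k+1+\gamma}$ this is exactly what the already-established hypothesis at levels $k-1$ and $k$ kills. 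No cancellation between inflow and outflow indicators, no excursion-length estimate and no local-time bound is needed. Your downward induction cannot close: when you isolate $\mu k\int X_k^N$, the inflow term $\lambda N\int\ind{S_{k-2}^N=0,X_{k-1}^N>0}\,\diff u$ involves the \emph{lower} coordinate $k-1$, which a downward induction has not yet controlled and which carries the favourable sign only for the other indicator; and your proposed escape --- deferring to Proposition~\ref{lmtitg} --- is circular, since that proposition is proved from the present one.

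Two further points. Your base-case arithmetic is off: the substitution $v=Nu$ gives $\int_0^{N^{d-1}t}L_0(Nu)\,\diff u=\frac1N\int_0^{N^{d}t}L_0(v)\,\diff v=O(N^{d-1})$ by Lemma~\ref{lemLGN}, not $O(N^{2d-3})$; done correctly this would give $o(N^{d-1+\gamma})$ for $k=d-1$, but, as you half-acknowledge, it gives nothing for $k<d-1$, so the coupling cannot be ``what makes every step work.'' Finally, \eqref{eqsaux3} is not obtained from uniform integrability of the $M/M/1$ comparison: in the paper it is read off at each step by taking expectations in the rearranged identity (for $k=1$, $\mu\E\int X_1^N=\E(X_0^N(N^{d-1}t))\le F_N$), and it is then itself the input that bounds $\E(\croc{M_k^N}(N^{d-1}t))$ via~\eqref{SDEVk} and hence controls the martingale term at the next level.
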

\begin{proof}
One proceeds by induction on $1\leq k\leq d-1$.  Let $k=1$, if $t\geq 0$,
Equation~\eqref{SDE11} gives the relation
\begin{equation}\label{eqsaux}
\mu\frac{1}{N^{1+\gamma}}\int_0^{N^{d-1}t}X_1^N(u)\,\diff u=\frac{X_0^N(N^{d-1} t)}{N^{1+\gamma}}-\frac{M_0^N(N^{d-1}t)}{N^{1+\gamma}},
\end{equation}
by Doob's Inequality and Equation~\eqref{SDEV1}, for $\eps>0$,
\begin{multline*}
\P\left(\sup_{0\le u\le t}\frac{|M_0^N(N^{d-1}u)|}{N^{1+\gamma}}\ge \eps\right)
\\\le \frac{1}{\eps^2}\mu \frac{1}{N^{2+2\gamma}}\E\left(\int_0^{N^{d-1}t}X_1^N(N^{d-1}u)\,\diff u\right)
=  \frac{1}{\eps^2} \E\left(\frac{X_0^N(N^{d-1} t)}{N^{2+2\gamma}}\right)
\end{multline*}
by using again Equation~\eqref{eqsaux}. The variable $X_0^N$ being upper bounded by $F_N$, the last equality shows that Convergence~\eqref{eqsaux3} holds in this case.  Additionally one gets that the martingale term of Equation~\eqref{eqsaux} vanishes at infinity. The convergence~\eqref{eqsaux2} is therefore proved. Induction assumption is thus true for $k=1$. 

Assume by that   induction assumption holds up to index $k{<}d{-}1$. Equation~\eqref{SDE1k} gives 
\begin{multline}\label{eqsaux1}
\frac{\mu (k{+}1)}{N^{k+1+\gamma}} \int_0^{N^{d-1}t} X_{k+1}^N(u)\,\diff u=\frac{X_{k}^N(N^{d-1}t)}{N^{k+1+\gamma}}  +\frac{\mu k}{N^{k+1+\gamma}} \int_0^{N^{d-1}t} X_{k}^N(u)\,\diff u
\\-\frac{\lambda }{N^{k+\gamma}}\int_0^{N^{d-1}t} \ind{S_{k-2}^N(u)=0,X_{k-1}^N(u)>0}\,\diff u\\+\frac{\lambda}{N^{k+\gamma}}\int_0^{N^{d-1}t} \ind{S_{k-1}^N(u)=0,X_{k}^N(u)>0}\,\diff u -\frac{M_{k}^N({N^{d-1}t})}{N^{k+1+\gamma}}.
\end{multline}
Note that, for $i=k-1$, $k$,
\[
\frac{\lambda }{N^{k+\gamma}}\int_0^{N^{d-1}t} \ind{S_{i-1}^N(u)=0,X_{i}^N(u)>0}\,\diff u\leq 
\frac{\lambda }{N^{k+\gamma}}\int_0^{N^{d-1}t} X_{i}^N(u)\,\diff u.
\]
By integrating Equation~\eqref{eqsaux1} and  using the induction assumption, one obtains that Convergence~\eqref{eqsaux3} holds for $k+1$. Back  to Equation~\eqref{eqsaux1}, by induction again, 
the first four terms of the right hand side of Equation~\eqref{eqsaux1} converges to $0$ and the martingale term vanishes since the expected value of its previsible increasing process is converging to $0$ by Relation~\eqref{SDEVk} and Convergence~\eqref{eqsaux3} which has been established.  
\end{proof}
\begin{prop}\label{lmtitg}
Under the condition $d\beta\mu<\lambda$ and  if $X^N(0)=(0,\ldots,0,F_N)$,
then  the relations,  for  $ 1\leq k\leq d{-}2$,
\begin{equation}\label{id1}
\lim_{N\to\infty}\left(\frac{1}{N^{k+1/2}} \int_0^{N^{d-1}t} 
\left[(k+1) \mu X_{k+1}^N(u)-\lambda N  X_k^N(u)\right]\,\diff u\right)=0
\end{equation}
holds for  the convergence in distribution of continuous processes. 
\end{prop}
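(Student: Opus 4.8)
The plan is to prove Relation~\eqref{id1} by induction on $k$, descending from $k=d-2$ to $k=1$, using the SDEs~\eqref{SDE1k} together with the already-established estimates of Proposition~\ref{prop1}. The intuition is that the quantity inside the integral is the net drift of the fast ``queue-like'' coordinate $S_k^N$: when $X_k^N>0$ and the lower coordinates are empty, files at level $k$ flow up at rate $\lambda N$ and down (from level $k+1$) at rate $(k{+}1)\mu X_{k+1}^N$, so the difference is precisely what makes the boundary layer near $S_k^N=0$ thin. The key algebraic step is to extract the combination $(k+1)\mu X_{k+1}^N - \lambda N X_k^N$ from Equation~\eqref{SDE1k}. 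Rewriting~\eqref{SDE1k} for index $k$ (not $k+1$), the term $\lambda N\ind{S_{k-1}^N(u)=0,\,X_k^N(u)>0}$ is the up-flow out of level $k$; but to get $\lambda N X_k^N$ one instead looks at Equation~\eqref{SDE1k} for index $k+1$, whose incoming duplication term is $\lambda N\ind{S_{k-1}^N(u)=0,\,X_k^N(u)>0}$, and compares it with $\lambda N X_k^N \ind{\cdots}$. So the first move is to establish that $\int_0^{N^{d-1}t}\ind{S_{k-1}^N(u)=0}X_k^N(u)\,\diff u$ and $\int_0^{N^{d-1}t}\ind{S_{k-1}^N(u)=0,X_k^N(u)>0}\,\diff u$ differ, after scaling by $N^{k+1/2}$ — no wait: rather, to relate $\lambda N$ times the indicator integral to $\lambda N X_k^N$ one needs $X_k^N$ to be essentially $0$ or $1$ on the set $\{S_{k-1}^N=0\}$, which will follow from the coupling bound of Proposition~\ref{propdom}.

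More concretely, I would proceed as follows. First, from Equation~\eqref{SDE1k} written at level $k$, solve for the difference of the two duplication integrals in terms of $X_k^N(N^{d-1}t)$, $\int X_{k+1}^N$, $\int X_k^N$ and $M_k^N$; divide by $N^{k+1/2}$. By Proposition~\ref{prop1} (with $\gamma$ chosen appropriately — here one needs $k+1/2 > k$, i.e. the exponent $k+1/2$ is strictly larger than $k$ but strictly smaller than $k+1$, so Relation~\eqref{eqsaux2} applies to $\int X_k^N$ but \emph{not} directly to $\int X_{k+1}^N$), the terms $X_k^N(N^{d-1}t)/N^{k+1/2}$ and $\mu k\int_0^{N^{d-1}t}X_k^N(u)\,\diff u/N^{k+1/2}$ vanish, and the martingale $M_k^N(N^{d-1}t)/N^{k+1/2}$ vanishes by Doob's inequality and~\eqref{SDEVk} combined with~\eqref{eqsaux3}. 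This leaves the statement that
\[
\frac{1}{N^{k+1/2}}\int_0^{N^{d-1}t}\Bigl[(k{+}1)\mu X_{k+1}^N(u) - \lambda N\ind{S_{k-1}^N(u)=0,\,X_k^N(u)>0}\Bigr]\,\diff u
\]
converges to $0$. The remaining task is to replace the indicator $\lambda N\ind{S_{k-1}^N(u)=0,\,X_k^N(u)>0}$ by $\lambda N X_k^N(u)$ at cost $o(N^{k+1/2})$. On the event $\{S_{k-1}^N(u)>0\}$ the duplication capacity is devoted to a strictly lower level, so the up-flow out of level $k$ is $0$ there, hence $\int\ind{S_{k-1}^N=0,X_k^N>0}$ and $\int X_k^N\ind{S_{k-1}^N=0,X_k^N>0}$ both concentrate on $\{S_{k-1}^N=0\}$; and using the coupling bound $(d-1)X_1^N+\cdots+X_{d-1}^N\le L_0(Nt)$ from Proposition~\ref{propdom}, on the relevant time scale $X_k^N$ is $O(N^\alpha)$ for every $\alpha>0$, but that is too weak — so instead one uses that $\int_0^{N^{d-1}t} X_k^N(u)(X_k^N(u)-1)\,\diff u$ (the number of ``collisions'' at level $k$) is controlled via~\eqref{eqsaux2} at the next index, giving $\int \lambda N(X_k^N - \ind{X_k^N>0}) = o(N^{k+1/2})$ after scaling.

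I expect the main obstacle to be precisely this last replacement of $\ind{X_k^N>0}$ by $X_k^N$ with the right error exponent $k+1/2$: naively $\lambda N\int(X_k^N - \ind{X_k^N>0})$ is $\lambda N$ times the integral of $X_k^N-1$ on $\{X_k^N\ge 1\}$, and bounding this by $o(N^{k+1/2})$ requires showing the time spent with $X_k^N\ge 2$ is genuinely small — this is the ``collision'' estimate and is the heart of why the critical exponent is $d-1$ and why $1/2$ (rather than any $\gamma$) is the sharp power. This is where the hypotheses $d\beta\mu<\lambda$ and the specific Ehrenfest-like reflection structure are used, via the geometric tail of the dominating $M/M/1$ queue and a further induction/bootstrap on the occupation times $\int X_k^N(X_k^N-1)\cdots(X_k^N-j+1)$. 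The descending induction closes because the bound on level $k$ feeds the required control of $\int X_{k+1}^N$ at the coarser scale into the level-$(k-1)$ step; the base case $k=d-2$ uses Equation~\eqref{SDE1k} at level $d-1$, where the incoming flow is $d\mu X_d^N = d\mu(F_N - S_{d-1}^N - X_0^N)$, for which $F_N/N\to\beta$ and Proposition~\ref{prop1} give the needed estimate directly.
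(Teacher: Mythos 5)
Your reduction via Equation~\eqref{SDE1k} is sound as far as it goes: isolating $(k{+}1)\mu\int X_{k+1}^N-\lambda N\int\ind{S_{k-1}^N=0,X_k^N>0}$ and killing, at scale $N^{k+1/2}$, the boundary term $X_{k}^N(N^{d-1}t)$, the term $\mu k\int X_k^N$, the incoming indicator (bounded by $X_{k-1}^N$) and the martingale, does follow from Propositions~\ref{propdom} and~\ref{prop1}. The genuine gap is exactly where you locate it, and your proposed resolution does not close it. Split the error as $\lambda N\int X_k^N\ind{S_{k-1}^N>0}+\lambda N\int (X_k^N-1)^+\ind{S_{k-1}^N=0}$. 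The first piece is controllable: it is bounded by $\lambda N\sum_{i<k}\int X_i^NX_k^N$, and combining $\sup X_k^N=o(N^{\alpha})$ with $\int X_i^N=o(N^{i+\gamma})$ for $i\leq k-1$ gives $o(N^{k+1/2})$ because the index drops by one. The second piece --- your ``collision estimate'' --- requires $\int_0^{N^{d-1}t}(X_k^N(u)-1)^+\diff u=o(N^{k-1/2})$, which is a full factor of $N$ beyond anything Propositions~\ref{propdom} and~\ref{prop1} provide: the best they yield is $\int(X_k^N-1)^+\leq\int X_k^N=o(N^{k+\gamma})$, or $\int X_k^N(X_k^N-1)\leq\sup X_k^N\cdot\int X_k^N=o(N^{k+\gamma+\alpha})$. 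Relation~\eqref{eqsaux2} ``at the next index'' controls $\int X_{k+1}^N$, not the second factorial moment of $X_k^N$, so the bootstrap you invoke is not actually set up; the key estimate is left as an assertion, and proving it would require a new excursion-type argument.

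The paper avoids the collision estimate altogether. It applies the integration formula to $f(x)=x^2$ and adds the result to Equation~\eqref{SDE1k}: for the function $x\mapsto x+x^2$ a downward jump contributes $(x-1)+(x-1)^2-x-x^2=-2x$, so the duplication term comes out already multiplied by $X_k^N$ and produces $-2\lambda N\int X_k^N$ \emph{exactly}, with no replacement of an indicator by a linear term. The price is a set of quadratic cross terms $\int X_k^NX_j^N$ and a second-order martingale, all of which are handled by the same $\sup$-times-integral bound at scale $N^{k+1/2}$; see Equation~\eqref{eqsaux4}. To complete your route you would have to prove the occupation-time bound $\lambda N\int_0^{N^{d-1}t}(X_k^N(u)-1)^+\ind{S_{k-1}^N(u)=0}\diff u=o(N^{k+1/2})$ from scratch; otherwise you should switch to the second-moment identity.
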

\begin{proof}
One proves Convergence~\eqref{id1} for $1\leq k\leq d-2$.
With the evolution equation~\eqref{SDE1k} and the same notation~\eqref{Nh} as in Section~\ref{evosubsec} for the Poisson processes, for any function $f:\N\to\R_+$,  one has 
\begin{multline*}
f\left(X_{k}^N(t)\right){=}f\left(X_{k}^N(0)\right)
{+}\int_0^{t}\left[\rule{0mm}{4mm}f\left(X_k^N(u{-}){+}1\right){-}f\left(X_k^N(u{-})\right)\right]\,{\cal N}_{\mu (k{+}1)X_{k+1}^N}(\diff u)\\
\int_0^{t}\left[\rule{0mm}{4mm}f\left(X_k^N(u{-}){-}1\right){-}f\left(X_k^N(u{-})\right)\right]\,{\cal N}_{\mu k X_{k}^N}(\diff u)\\
{+}\int_0^{t} \left[\rule{0mm}{4mm}f\left(X_k^N(u{-}){+}1\right){-}f\left(X_k^N(u{-})\right)\right]\ind{S_{k-2}^N(u{-})=0,X_{k-1}^N(u{-})>0}\,{\cal N}_{\lambda N}(\diff u)\\
{+}\int_0^{t} \left[\rule{0mm}{4mm}f\left(X_k^N(u{-}){-}1\right){-}f\left(X_k^N(u{-})\right)\right]\ind{S_{k-1}^N(u{-})=0,X_{k}^N(u{-})>0}\,{\cal N}_{\lambda N}(\diff u).
\end{multline*}
By taking $f(x){=}x^2$ and by compensating the Poisson processes, one gets the relation
\begin{multline}\label{eqsaux5}
X_{k}^N(N^{d-1}t)^2=\mu (k{+}1)\int_0^{N^{d-1}t}( 2 X_{k}^N(u)+1)X_{k+1}^N(u)\,\diff u\\+\mu k\int_0^{N^{d-1}t}( -2X_{k}^N(u)+1) X_{k}^N(u)\,\diff u\\
+\lambda N\int_0^{N^{d-1}t} ( 2 X_{k}^N(u)+1)\ind{S_{k-2}^N(u)=0,X_{k-1}^N(u)>0}\,\diff u\\+\lambda N\int_0^{N^{d-1}t} ( -2 X_{k}^N(u)+1) \ind{S_{k-1}^N(u)=0,X_{k}^N(u)>0}\,\diff u
+M_{k,2}^N(t).
\end{multline}
The process $(M_{k,2}^N(t))$ is a martingale with a previsible increasing process given by
\begin{multline}\label{eqsaux6}
\croc{M_{k,2}^N}(t)= \mu (k{+}1)\int_0^{N^{d-1}t}( 2 X_{k}^N(u)+1)^2X_{k+1}^N(u)\,\diff u\\+\mu k\int_0^{N^{d-1}t}( -2X_{k+1}^N(u)+1)^2 X_{k}^N(u)\,\diff u\\
+\lambda N\int_0^{N^{d-1}t} ( 2 X_{k}^N(u)+1)^2\ind{S_{k-2}^N(u)=0,X_{k-1}^N(u)>0}\,\diff u\\+\lambda N\int_0^{N^{d-1}t} ( -2 X_{k}^N(u)+1)^2 \ind{S_{k-1}^N(u)=0,X_{k}^N(u)>0}\,\diff u.
\end{multline}
By adding up Equations~\eqref{SDE1k} and~\eqref{eqsaux5}, after some straightforward calculations,  one gets
\begin{multline}\label{eqsaux4}
X_{k}^N(N^{d-1}t){+}X_{k}^N(N^{d-1}t)^2=2\int_0^{N^{d-1}t} \left[\mu (k{+}1) X_{k+1}^N(u){-}\lambda N X_{k}^N(u)\right]\,\diff u\\
+2\mu(k{+}1) \int_0^{N^{d-1}t} \!\!\! X_k^N(u)X_{k+1}^N(u)\,\diff u -2\mu k\int_0^{N^{d-1}} X_k^N(u)^2\,\diff u
\\+2\lambda N\int_0^{N^{d-1}t}\!\!\! (X_{k}^N(u)+1)\ind{S_{k-2}^N(u)=0,X_{k-1}^N(u)>0}\,\diff u\\+2\lambda N\int_0^{N^{d-1}t}  X_{k}^N(u)\ind{S_{k-1}^N(u)>0,X_{k}^N(u)>0}\,\diff u
+M_{k}^N(t)+M_{k,2}^N(t).
\end{multline}
It will be shown that, when this relation is scaled by the factor $N^{k+1/2}$, except the first integral in the right hand side, all terms of this identity vanish as $N$ gets large.  
The proposition will be then  proved. 

For the terms of the left hand side this is clear. 
For $1{\leq} k{\leq} d{-}2$  and $j{\in}\{k,k{+}1\}$, the relation 
\begin{multline*}
\frac{1}{N^{k+1/2}}\int_0^{N^{d-1}t}X_k^N(u)X_{j}^N(u)\,\diff u
\\ \leq \sup_{0\leq u\leq t}\left( \frac{X_{j}^N(N^{d-1}u)}{N^{1/4}} \right)\frac{1}{N^{k+1/4}}\int_0^{N^{d-1}t}X_k^N(u)\,\diff u,
\end{multline*}
and Propositions~\ref{propdom} and~\ref{prop1} show that  the second term of the right hand side of Equation~\eqref{eqsaux4} scaled by $N^{k+1/2}$ vanishes for the convergence of processes when $N$ gets large. By using the inequalities
\begin{align*}
\int_0^{N^{d-1}t}  X_{k}^N(u)\ind{S_{k-2}^N(u)=0,X_{k-1}^N(u)>0}\,\diff u& \leq \int_0^{N^{d-1}t}  X_{k}^N(u)X_{k-1}^N(u)\,\diff u,
\intertext{ and } 
\int_0^{N^{d-1}t}  X_{k}^N(u)\ind{S_{k-1}^N(u)>0,X_{k}^N(u)>0}\,\diff u&\leq\sum_{i=1}^{k-1} \int_0^{N^{d-1}t}  X_{i}^N(u)X_{k}^N(u)\,\diff u,
\end{align*}
the same property can be established in a similar way  for the third, fourth and fifth terms. 

By using Equations~\eqref{SDEVk} and~\eqref{eqsaux6} and similar methods one gets  that for any $t\geq 0$,
\[
\lim_{N\to+\infty} \frac{\E\left(\croc{M_{k}^N}(N^{d-1}t)\right)}{N^{2k+1}}=0,\quad
\lim_{N\to+\infty} \frac{\E\left(\croc{M_{k,2}^N}(N^{d-1}t)\right)}{N^{2k+1}}=0.
\]
Doob's Inequality shows that the martingale terms of Relation~\eqref{eqsaux4} scaled by $N^{k+1/2}$ vanish for the convergence of processes when $N$ gets large. The proposition is proved. 
\end{proof}
\begin{prop}\label{limit}
Under the condition $d\beta\mu<\lambda$ and  if $X^N(0)=(0,\ldots,0,F_N)$ then,  for the convergence in distribution of continuous processes,  the relations
\begin{equation}\label{eqaa}
\lim_{N\to+\infty} \left(\sqrt{N}\left(\frac{\mu}{N}\int_0^{N^{d-1}t}X_1^N(u)\,\diff u-\lambda \frac{(d{-}1)!}{\rho^{d-1}} \int_0^t X_{d-1}^N(N^{d-1}u)\,\diff u\right)\right)=0
\end{equation}
and
\begin{equation}\label{eqbb}
\lim_{N\to+\infty} \left(\frac{X_0^N(N^{d-1}t)}{N}-\lambda \frac{(d{-}1)!}{\rho^{d-1}} \int_0^t X_{d-1}^N(N^{d-1}u)\,\diff u\right)=0
\end{equation}
hold. 
\end{prop}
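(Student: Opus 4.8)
The goal is to chain together the telescoping of the "conservation-of-flow" identities from Proposition~\ref{lmtitg} so as to express $\mu\int_0^{N^{d-1}t}X_1^N(u)\,\diff u$ in terms of $X_{d-1}^N$ alone, up to errors that vanish at the $\sqrt N$ scale. The starting point for \eqref{eqbb} is the exact identity \eqref{SDE11}, which gives
\[
\frac{X_0^N(N^{d-1}t)}{N}=\frac{\mu}{N}\int_0^{N^{d-1}t}X_1^N(u)\,\diff u+\frac{M_0^N(N^{d-1}t)}{N}.
\]
The martingale term has increasing process $\mu\int_0^{N^{d-1}t}X_1^N(u)\,\diff u$, whose expectation is $o(N)$ by Proposition~\ref{prop1} (case $k=1$, $\gamma$ small), so $M_0^N(N^{d-1}\cdot)/N\to 0$ uniformly on compacts by Doob's inequality; actually one gets $o(\sqrt N)$ as well. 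Hence \eqref{eqbb} follows from \eqref{eqaa} (divide \eqref{eqaa} by $\sqrt N$), and the real work is \eqref{eqaa}.

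For \eqref{eqaa}, the plan is to iterate Proposition~\ref{lmtitg}. Writing $I_k^N(t):=\int_0^{N^{d-1}t}X_k^N(u)\,\diff u$, Proposition~\ref{lmtitg} says that
\[
\frac{1}{N^{k+1/2}}\Bigl((k+1)\mu\,I_{k+1}^N(t)-\lambda N\,I_k^N(t)\Bigr)\xrightarrow[N\to\infty]{}0,
\qquad 1\le k\le d-2,
\]
i.e. $(k+1)\mu\,I_{k+1}^N=\lambda N\,I_k^N+o(N^{k+1/2})$. Starting from $k=1$ and substituting repeatedly, one obtains
\[
\mu\,I_1^N(t)=\lambda N\,I_0^N(t)\cdot(\text{irrelevant, }k=0\text{ not used}),
\]
so instead express the chain upward: from $(k+1)\mu I_{k+1}^N=\lambda N I_k^N+o(N^{k+1/2})$ we get $\mu I_k^N=\tfrac{(k+1)\mu}{\lambda N}\cdot\mu I_{k+1}^N + o(N^{k-1/2})$ after dividing by appropriate powers; iterating from $k=1$ up to $k=d-1$ gives
\[
\mu\,I_1^N(t)=\frac{d!\,\mu^{?}}{(\lambda N)^{d-2}}\,\mu I_{d-1}^N(t)\;+\;(\text{accumulated errors}),
\]
where the combinatorial constant assembles to $\lambda N\cdot\frac{(d-1)!}{(\lambda N/\mu)^{d-1}}\cdot\frac1N = \lambda\frac{(d-1)!}{\rho^{d-1}}$ once one carefully tracks the factors $2\mu,3\mu,\dots,(d-1)\mu$ in the numerators and the $d-2$ factors of $\lambda N$ in the denominators. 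The key bookkeeping: telescoping the constants $\prod_{j=2}^{d-1}(j\mu)=(d-1)!\,\mu^{d-2}$ against $(\lambda N)^{d-2}$ yields exactly $\lambda N\frac{(d-1)!}{\rho^{d-1}}\cdot\frac1N\int_0^t X_{d-1}^N(N^{d-1}u)\,\diff u$ after the change of variables $u\mapsto N^{d-1}u$ in $I_{d-1}^N$. The error from the $k$-th substitution, originally $o(N^{k+1/2})$, gets multiplied by $\prod_{j>k}\tfrac{(j+1)\mu}{\lambda N}=O(N^{-(d-1-k)})$ and so contributes $o(N^{k+1/2}\cdot N^{-(d-1-k)})=o(N^{3/2-d+2k})$; summing over $1\le k\le d-2$ the dominant error is at $k=d-2$, namely $o(N^{1/2-d+2(d-2)})=o(N^{d-3/2})$. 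Since \eqref{eqaa} is $\mu I_1^N$ scaled by $\sqrt N / N = N^{-1/2}$... wait — the statement multiplies by $\sqrt N$ and divides the integral by $N$, so the relevant scale for $\mu I_1^N$ is $N^{-1/2}$; one must check each error term $o(N^{3/2-d+2k})$ divided by $N^{1/2}$ (i.e. $\times N^{-1/2}$ after the $\sqrt N/N$ normalization) tends to $0$, which at worst ($k=d-2$) gives $o(N^{d-3/2}\cdot N^{-1/2}\cdot N^{1-1})$; the precise exponent check is the place to be careful.

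I expect the main obstacle to be precisely this error-accounting: one has to verify that every intermediate term produced by the iteration — not only the $o(N^{k+1/2})$ flow-balance errors of Proposition~\ref{lmtitg}, but also the martingale contributions $M_k^N$ from \eqref{SDE1k} that appear when one rewrites $I_k^N$ in terms of $I_{k+1}^N$, together with the reflection/indicator terms $\lambda N\int\ind{S_{k-2}^N=0,X_{k-1}^N>0}\diff u$ and $\lambda N\int\ind{S_{k-1}^N=0,X_k^N>0}\diff u$ — is of smaller order than $\sqrt N$ at the end of the chain. The indicator terms are controlled via $\ind{\cdots}X_{k-1}^N\le X_{k-1}^N\le X_{k-1}^N X_k^N/\,(\cdots)$-type bounds and Propositions~\ref{propdom} and~\ref{prop1} exactly as in the proof of Proposition~\ref{lmtitg}; the martingales are controlled by Doob together with the estimates on $\E\croc{M_k^N}$ already derived there. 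Once these estimates are in place, the result is a clean induction: assuming \eqref{eqaa}-type control down to level $k+1$, use \eqref{id1} of Proposition~\ref{lmtitg} at level $k$ to push it to level $k$, with the constant updated by the factor $(k+1)\mu/(\lambda N)$ and the error absorbed. Finally, \eqref{eqbb} is obtained by combining \eqref{eqaa} with the vanishing of $M_0^N(N^{d-1}\cdot)/N$ noted above and the identity \eqref{SDE11}.
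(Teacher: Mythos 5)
Your route is the one the paper itself takes: telescope the flow identities \eqref{id1} of Proposition~\ref{lmtitg} from $k=1$ up to $k=d-1$ to obtain \eqref{eqaa}, then deduce \eqref{eqbb} from the SDE~\eqref{SDE11} together with the vanishing of $(M_0^N(N^{d-1}t)/N)$ via Doob's inequality and Proposition~\ref{prop1}; your identification of the constant $\lambda(d-1)!/\rho^{d-1}$ is also correct. One remark before the main point: your concern about having to control anew the martingales $M_k^N$ and the indicator terms of \eqref{SDE1k} is unfounded --- all of that is already packaged inside \eqref{id1}, and the only input needed for \eqref{eqaa} is $(k+1)\mu I_{k+1}^N-\lambda N I_k^N=o(N^{k+1/2})$ for $1\le k\le d-2$, with $I_k^N(t)=\int_0^{N^{d-1}t}X_k^N(u)\,\diff u$.

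The one genuine flaw is the error propagation, which you yourself flag as unresolved and which, as written, does not close. Solving the $k$-th relation for $I_k^N$ gives $I_k^N=\tfrac{(k+1)\mu}{\lambda N}I_{k+1}^N+o(N^{k-1/2})$, and when this is substituted into the expression for $I_1^N$ the level-$k$ error is multiplied by the prefactor accumulated from the levels \emph{below} $k$, namely $\prod_{j=2}^{k}\tfrac{j\mu}{\lambda N}=O(N^{-(k-1)})$, not by $\prod_{j>k}(\cdots)=O(N^{-(d-1-k)})$ as you wrote. With the correct factor every error contributes $o(N^{k-1/2}\cdot N^{-(k-1)})=o(N^{1/2})$ uniformly in $k$, which becomes $o(1)$ after the $\sqrt N/N$ normalization of \eqref{eqaa}; with your factor the worst term does not vanish after normalization once $d\ge 4$, which is presumably the source of your hedging. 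The cleanest repair --- and what the paper does --- is to avoid recursive substitution altogether: divide the $k$-th relation of \eqref{id1} by $N^{k+1/2}$ and multiply by the constant $k!/(\rho^k\lambda)$, so that it reads
\[
\sqrt N\int_0^{N^{d-1}t}\left[\frac{(k+1)!}{\rho^{k+1}}\frac{X_{k+1}^N(u)}{N^{k+1}}-\frac{k!}{\rho^k}\frac{X_k^N(u)}{N^k}\right]\diff u=o(1),
\]
and sum over $1\le k\le d-2$. The relations then telescope additively with no amplification of errors, and \eqref{eqaa} follows from the change of variables $u\mapsto N^{d-1}u$ in the term of index $d-1$.
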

\begin{proof}
By  Relation~\eqref{id1}, one gets that, for the convergence in distribution of continuous processes,
\[
\lim_{N\to+\infty} \left( \sqrt{N}\left(\int_0^{N^{d-1}t} 
\left[\mu(k+1)  \frac{X_{k+1}^N(u)}{N^{k+1}}-\lambda \frac{X_k^N(u)}{N^k}\right]\,\diff u\right)\right)=0,
\]
holds for $1\leq k\leq d-2$, and therefore that
\[
\lim_{N\to+\infty} \left(\sqrt{N} \left(\int_0^{N^{d-1}t} 
\left[ \frac{(k{+}1)!}{\rho^{k+1}}  \frac{X_{k+1}^N(u)}{N^{k+1}}-\frac{k!}{\rho^{k}}\frac{X_k^N(u)}{N^k}\right]\,\diff u\right)\right)=0.
\]
By summing up these relations, one finally gets that
\[
\lim_{N\to+\infty} \left(\sqrt{N}\left(\frac{(d{-}1)!}{\rho^{d-1}} \frac{1}{N^{d-1}}\int_0^{N^{d-1}t}   X_{d-1}^N(u)\,\diff u 
-\frac{\mu}{\lambda}\int_0^{N^{d-1}t} \frac{X_1^N(u)}{N}\,\diff u\right)\right)=0.
\]
Relation~\eqref{eqaa} is proved. 

SDE~\eqref{SDE11} for $(X_0^N(t))$ gives the relation
\begin{align*}
\frac{X_0^N(N^{d-1}t)}{N}=\frac{M_0^N(N^{d-1}t)}{N}+\frac{\mu}{N}\int_0^{N^{d-1}t} X_1^N(u)\,\diff u,
\end{align*}
where $({M_0^N(N^{d-1}t)}/{N})$ is a martingale whose previsible increasing process is given by
\[
\left(\croc{\frac{M_0^N}{N}}(t)\right)=\left(\mu\frac{1}{N^2}\int_0^{N^{d-1}t} X_1^N(u)\,\diff u\right),
\]
it is converging in distribution to $0$ by Proposition~\ref{prop1}, one concludes that the martingale is also converging to $0$. The proposition is thus proved. 
\end{proof}
We now turn to the proof of an averaging principle.   It relies on the martingale characterization of  Markov processes  as used in Papanicolau et al.~\cite{PSV} in a Brownian setting, see also Kurtz~\cite{Kurtz:05}.
\subsection{Convergence of Occupation Measures}\label{StocAv}
For $x\in\N$ and $N\geq 1$, the random measure $\Lambda_x^N$ on $\R_+$ is defined as, for a measurable function $g:\R_+\to\R_+$,
\[
\croc{\Lambda_x^N,g}=\int_{\R_+} g(t)\ind{X^N_{d-1}(N^{d-1}t)=x}\,\diff t.
\]
Clearly $\Lambda_x^N$ is the random  Radon measure  associated with the local time of $(X_{d-1}^N(t))$ at $x$. For a given $x$,  the sequence $(\Lambda_x^N)$ of random Radon measures on $\R_+$ is tight. See Dawson~\cite[Lemma~3.28, page~44]{Dawson} for example. Note that the  null measure  can be a possible limit of this sequence. By using a diagonal argument, one can fix $(N_k)$ such that, for any $x\in\N$, $(\Lambda_x^{N_k})$ is a converging subsequence whose limit is $\nu_x$. 

Since, for $N\geq 1$, $\Lambda_x^{N}$ is absolutely continuous with respect to the Lebesgue measure on $\R_+$, the same property holds for a possible limiting measure $\nu_x$. Let $(x,t)\to \pi_t(x)$ denote its (random) density. It should be remarked that, one can choose a version of $\pi_t(x)$ such that the map $(\omega,x,t)\to \pi_t(x)(\omega)$ on the product of the probability space and $\N\times\R_+$ is measurable by taking $\pi_t(x)$ as a limit of measurable maps,
\[
\pi_t(x)=\limsup_{s\to 0}\frac{1}{s} \nu_x([t,t+s]).
\]
See Chapter~8 of Rudin~\cite{Rudin:01} for example. See also Lemma~1.4 of Kurtz~\cite{Kurtz:05}.  One denotes by $\pi_t$ the measure on $\N$ defined by  the sequence $(\pi_t(x),x\in\N)$. 
\begin{prop}\label{proppsi}
For any function $f:\N\to \R_+$ such that the sequence $(f(x)/x)$ is bounded then,  with the subsequence $(N_k)$ defined above, 
for the convergence in distribution of continuous processes,
\begin{align*}
\lim_{k\to+\infty} \left(\frac{1}{N_k^{d-1}}\int_0^{N_k^{d-1}t}f\left(X_{d-1}^{N_k}(u)\right)\,\diff u\right)
=\left( \int_0^t \croc{\pi_u,f}\,\diff u\right),
\end{align*}
In particular, almost surely, for all $t\geq 0$,
\[
\sum_{x\geq 0}\int_0^t\pi_u(x)\,\diff u=\int_0^t\pi_u(\N)\,\diff u =t.
\]
\end{prop}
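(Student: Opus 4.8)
The plan is to exploit the fact that, for each fixed $N$, the map $x\mapsto\Lambda_x^N$ records exactly the time $(X_{d-1}^N(N^{d-1}u))$ spends at $x$, so that $\sum_{x\geq 0}\langle\Lambda_x^N,g\rangle=\int_{\R_+}g(u)\,\diff u$ for every nonnegative $g$: the total occupation over all states is just Lebesgue measure. First I would fix a bounded continuous test function $g$ with compact support in $[0,t]$ and a truncation level $K\in\N$, and write
\[
\frac{1}{N_k^{d-1}}\int_0^{N_k^{d-1}t}f\!\left(X_{d-1}^{N_k}(u)\right)g(u)\,\diff u
=\sum_{x=0}^{K}f(x)\croc{\Lambda_x^{N_k},g}+\frac{1}{N_k^{d-1}}\int_0^{N_k^{d-1}t}f\!\left(X_{d-1}^{N_k}(u)\right)g(u)\ind{X_{d-1}^{N_k}(u)>K}\,\diff u.
\]
For the finite sum, the convergence $\Lambda_x^{N_k}\to\nu_x$ along the chosen subsequence gives $\sum_{x=0}^K f(x)\croc{\Lambda_x^{N_k},g}\to\sum_{x=0}^K f(x)\int g\,\diff\nu_x=\int_0^t\langle\pi_u\ind{\cdot\leq K},f\rangle g(u)\,\diff u$, using that $\nu_x$ has density $\pi_u(x)$. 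Letting $K\to\infty$ at the end and invoking monotone convergence on the limit side handles the full sum $\langle\pi_u,f\rangle$.

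The key step is controlling the tail term uniformly in $k$. Here I would use the hypothesis that $f(x)/x$ is bounded, say by $c$, so the tail integrand is at most $c\,X_{d-1}^{N_k}(u)\ind{X_{d-1}^{N_k}(u)>K}\leq c\,X_{d-1}^{N_k}(u)$, and then apply Proposition~\ref{prop1} with $k=d-1$ and a small $\gamma$ — or rather the cruder bound $X_{d-1}^N(u)\leq (d-1)X_1^N(u)+\cdots+X_{d-1}^N(u)\leq L_0(Nu)$ from the coupling in Proposition~\ref{propdom}, together with the ergodicity estimate \eqref{eq1} — to show that
\[
\frac{1}{N_k^{d-1}}\int_0^{N_k^{d-1}t}X_{d-1}^{N_k}(u)\ind{X_{d-1}^{N_k}(u)>K}\,\diff u
\]
is small in probability, uniformly in $k$, as $K\to\infty$. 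Since $L_0$ is a stable $M/M/1$ queue, $\frac1T\int_0^T L_0(u)\ind{L_0(u)>K}\,\diff u$ converges to the stationary tail expectation $\E(L_0(\infty)\ind{L_0(\infty)>K})$, which vanishes as $K\to\infty$; rescaling time by $N_k^{d-1}$ does not affect this ergodic average. This uniform tail control is the main obstacle: without it one cannot interchange the limit in $k$ with the sum over $x$, and the limiting measure $\pi_u$ could a priori lose mass to infinity.

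Once the convergence of $\frac{1}{N_k^{d-1}}\int_0^{N_k^{d-1}t}f(X_{d-1}^{N_k}(u))\,\diff u$ to $\int_0^t\langle\pi_u,f\rangle\,\diff u$ is established for all such $f$ (the case of general $g$ following by a standard approximation argument, or simply taking $g=\ind{[0,t]}$ and using that the limit process is continuous and nondecreasing), I would specialize to $f\equiv 1$: the left-hand side is identically $t$ for every $N_k$, hence the limit gives $\int_0^t\pi_u(\N)\,\diff u=t$ almost surely for all $t$ in a countable dense set, and then for all $t$ by continuity. Finally $\sum_{x\geq 0}\int_0^t\pi_u(x)\,\diff u=\int_0^t\pi_u(\N)\,\diff u$ is just Tonelli's theorem applied to the nonnegative density, which yields the last displayed identity. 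A remark: the identity $\pi_u(\N)=1$ for a.e.\ $u$ is precisely the statement that no mass escapes to infinity in the averaging, which is why the tail estimate above is doing the real work.
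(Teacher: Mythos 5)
Your proposal is correct and follows essentially the same route as the paper: decompose over the states $x\leq K$, use the convergence $\Lambda_x^{N_k}\to\nu_x$ for the finite part, and control the tail via the bound $f(x)\leq cx$ together with the $M/M/1$ coupling of Proposition~\ref{propdom} and the ergodic limit $Kt\,\E(\bar L_0\ind{\bar L_0\geq C})$. The only point you pass over quickly is the tightness of the processes $(\Psi_f^N(t))$ themselves, which the paper gets from the modulus-of-continuity bound $\Psi_f^N(t)-\Psi_f^N(s)\leq K N^{-d}\int_{N^ds}^{N^dt}L_0(u)\,\diff u$ and Lemma~\ref{lemLGN}; since your processes are nondecreasing with a continuous limit this is easily repaired.
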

\begin{proof}

Denote  $K=\sup\{f(x)/x:x\geq 1\}$ and  
\[
\Psi_f^N(t)= \frac{1}{N^{d-1}}\int_0^{N^{d-1}t}f\left(X_{d-1}^{N}(u)\right)\,\diff u,
\]
the stochastic domination results of Proposition~\ref{propdom} gives that, for any $0\leq s\leq t$, 
\[
\Psi_f^N(t)-\Psi_f^N(s)\leq K \frac{1}{N^{d}}\int_{N^{d}s}^{N^{d}t}L_0(u)\,\diff u,
\]
where $(L_0(t))$  is the process of the number of customers of  an  $M/M/1$ queue with arrival rate $d\mu\beta_0$ and service rate $\lambda$ for some convenient $\beta_0$ such that $d\mu\beta_0<\lambda$ and  with the initial condition $L_{0}(0)=0$. 
The convergence result of Lemma~\ref{lemLGN} implies then that the sequence of processes $(\Psi_f^N(t))$ is tight by the criteria of the modulus of continuity. 

For $C\geq 1$ and $t>0$
\begin{align*}
\frac{1}{N^{d-1}}\int_0^{N^{d-1}t}\!\!\!\!\!f(X_{d-1}^{N}(u))\ind{X^N_{d-1}(u)\geq C}\,\diff u
&\leq \frac{K}{N^{d-1}}\int_0^{N^{d-1}t} \!\!\!\!\!X_{d-1}^{N}(u)\ind{X^N_{d-1}(u)\geq C}\,\diff u\\
&\leq \frac{K}{N^{d}}\int_0^{N^{d}t}L_0(u)\ind{L_0(u)\geq C}\,\diff u.
\end{align*}
The last term is converging in distribution to $Kt \E(\bar{L}_0)\ind{\bar{L}_0\geq C})$, where $\bar{L}_0$ is a random variable with geometric distribution with parameter $d\mu\beta_0/\lambda$, the invariant distribution of the process $(L_0(t))$.  In particular for $T>0$, if $C$ is sufficiently large, this term can be made arbitrarily small uniformly for $t\leq T$.

By using the fact that, for $x\in \N$,
\[
\frac{1}{N^{d-1}}\int_0^{N^{d-1}t}f(X_{d-1}^{N}(u))\ind{X^N_{d-1}(u)=x}\,\diff u= 
f(x)\croc{\Lambda_x^N,\mathbbm{1}_{[0,t]}},
\]
one gets the desired  convergence in distribution.

\end{proof}

\subsection{The Decay of the Network occurs on the Time Scale $t\mapsto N^{d-1}t$}

We have all the necessary technical results to prove the main result concerning the behavior of the system on the time scale $t\mapsto N^{d-1} t$. 
\begin{theorem}[Rate of Decay of the Network]\label{theodec}
Under the condition $d\beta\mu<\lambda$ and  if $X^N(0)=(0,\ldots,0,F_N)$,
then  the sequence of  processes $(X_0^N(N^{d-1}t)/N)$
converges in distribution to $(\Phi(t))$ where, for $t\geq 0$,  $\Phi(t)$ is the unique solution
$y\in[0,\beta]$ of the equation
\begin{equation}\label{decay}
\left(1-\frac{y}{\beta}\right)^{\rho/d} e^y=\exp\left(-\lambda\frac{(d{-}1)!}{\rho^{d-1}} t\right).
\end{equation}
\end{theorem}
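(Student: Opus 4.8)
The plan is to combine the reduction obtained in Proposition~\ref{limit} with a stochastic averaging principle for the occupation measure of the ``fast'' coordinate $(X_{d-1}^N)$, in the spirit of the martingale method of~\cite{PSV,Kurtz:05}, relying on Propositions~\ref{propdom}, \ref{prop1} and~\ref{proppsi}. By Relation~\eqref{eqbb}, the process $(X_0^N(N^{d-1}t)/N)$ has the same limit points as $\bigl(\lambda\frac{(d-1)!}{\rho^{d-1}}\int_0^t X_{d-1}^N(N^{d-1}u)\,\diff u\bigr)$, which is tight by the stochastic domination of Proposition~\ref{propdom} together with Lemma~\ref{lemLGN} (this is precisely the tightness argument in the proof of Proposition~\ref{proppsi} applied with $f=\mathrm{id}$); hence $(X_0^N(N^{d-1}t)/N)$ is tight, with non-decreasing continuous limit points. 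By a diagonal extraction refining the subsequence of Section~\ref{StocAv}, fix a sequence $(N_k)$ along which $X_0^{N_k}(N_k^{d-1}\cdot)/N_k\to\phi$ uniformly on compact sets and all occupation measures $\Lambda_x^{N_k}$ converge, with densities $(\pi_u(x))$ as in Section~\ref{StocAv}. Proposition~\ref{proppsi} with $f=\mathrm{id}$ then gives $\phi(t)=\lambda\frac{(d-1)!}{\rho^{d-1}}\int_0^t\croc{\pi_u,\mathrm{id}}\,\diff u$, so it remains only to identify $\pi_u$.

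For this, fix a bounded $g\colon\N\to\R$ and apply Dynkin's formula to $(g(X_{d-1}^N(t)))$ with the $Q$-matrix~\eqref{Qmatd}; reading it at time $N^{d-1}t$ and dividing by $N^d$, the boundary and martingale terms vanish (the latter since $g$ is bounded and the increasing process is $O(N^d)$). In the integral term, the birth contribution is $\frac{\mu d}{N^d}\int_0^{N^{d-1}t}X_d^N(u)\bigl(g(X_{d-1}^N+1)-g(X_{d-1}^N)\bigr)(u)\,\diff u$ with $X_d^N=F_N-X_0^N-S_{d-1}^N$, where $F_N/N\to\beta$ and the $S_{d-1}^N$ part is negligible by Proposition~\ref{prop1} and the tightness above; the direct death contribution equals $\frac{\lambda}{N^{d-1}}\int_0^{N^{d-1}t}\ind{X_{d-1}^N(u)>0}\bigl(g(X_{d-1}^N-1)-g(X_{d-1}^N)\bigr)(u)\,\diff u$ up to remainders bounded by $\sum_{i\le d-2}\int_0^tX_i^N(N^{d-1}s)\,\diff s$ (removing the extra indicators $\ind{X_i^N=0}$); and the $d{-}1\to d{-}2$ loss and $d{-}2\to d{-}1$ duplication contributions are bounded respectively by $N^{-1}\int_0^tX_{d-1}^N(N^{d-1}s)\,\diff s$ and $\lambda\int_0^tX_{d-2}^N(N^{d-1}s)\,\diff s$. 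By Proposition~\ref{prop1} (choosing $\gamma$ with $i+\gamma>d-1$) all these remainders tend to $0$. Passing to the limit along $(N_k)$ — the slow factor $\beta-X_0^{N_k}(N_k^{d-1}\cdot)/N_k\to\beta-\phi$ uniformly, the fast factors handled through the $\Lambda_x^{N_k}$ after a routine step-function approximation of $\phi$ on a fine grid — one obtains, for all bounded $g$ and $t\ge0$,
\[
\int_0^t\croc{\pi_s,\mathcal{L}_s g}\,\diff s=0,\qquad \mathcal{L}_s g(j):=\mu d(\beta-\phi(s))\bigl(g(j{+}1){-}g(j)\bigr)+\lambda\ind{j>0}\bigl(g(j{-}1){-}g(j)\bigr).
\]
Thus for a.e.\ $s$ the probability measure $\pi_s$ (of total mass $1$ by Proposition~\ref{proppsi}) is invariant for the $M/M/1$ generator $\mathcal{L}_s$ with arrival rate $\mu d(\beta-\phi(s))$ and service rate $\lambda$; since $\rho>d\beta\geq d(\beta-\phi(s))$ this queue is ergodic, so $\pi_s$ is the geometric law with parameter $\sigma(s)=d(\beta-\phi(s))/\rho$ and $\croc{\pi_s,\mathrm{id}}=\sigma(s)/(1-\sigma(s))=d(\beta-\phi(s))/(\rho-d(\beta-\phi(s)))$.

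Substituting back, $\phi$ is the unique solution of
\[
\dot\phi(t)=\lambda\frac{(d-1)!}{\rho^{d-1}}\cdot\frac{d(\beta-\phi(t))}{\rho-d(\beta-\phi(t))},\qquad \phi(0)=0,
\]
the right-hand side being $C^1$ on $[0,\beta]$ since $\rho-d(\beta-\phi)\geq\rho-d\beta>0$. A direct differentiation of~\eqref{decay} shows that its solution $\Phi$ satisfies exactly this equation with $\Phi(0)=0$; moreover $y\mapsto(1-y/\beta)^{\rho/d}e^y$ is strictly decreasing from $1$ to $0$ on $[0,\beta]$, so $\Phi$ is well defined and $\Phi(t)\uparrow\beta$. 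By uniqueness $\phi=\Phi$, and since every subsequential limit of $(X_0^N(N^{d-1}t)/N)$ equals the deterministic function $\Phi$, the whole sequence converges in distribution to $(\Phi(t))$.

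The main obstacle is the averaging step: showing that $(X_{d-1}^N(N^{d-1}\cdot))$ equilibrates to the geometric law while the slow variable $X_0^N/N$ is essentially frozen. This rests on the fact — supplied by Propositions~\ref{propdom} and~\ref{prop1} — that the coordinates $X_1^N,\dots,X_{d-2}^N$ spend a negligible fraction of the rescaled time away from $0$, so that the effective fast dynamics genuinely reduces to a single $M/M/1$ queue, together with a careful bookkeeping of the powers of $N$ in the Dynkin formula so that the martingale term and the spurious cascade terms indeed disappear in the limit.
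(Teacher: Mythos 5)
Your proposal is correct and follows essentially the same route as the paper: reduction to the occupation integral of $(X_{d-1}^N)$ via Proposition~\ref{limit}, convergence of the occupation measures via Proposition~\ref{proppsi}, identification of the limiting densities $(\pi_u)$ as the geometric invariant law of an $M/M/1$ queue with arrival rate $d\mu(\beta-\Phi(u))$ and service rate $\lambda$ through the (Dynkin/martingale) generator computation with the remainder terms killed by Propositions~\ref{propdom} and~\ref{prop1}, and finally the fixed-point equation~\eqref{intPhi} yielding~\eqref{decay}. The only cosmetic difference is that you test against general bounded functions where the paper uses the indicators $f_k=\mathbbm{1}_k$; the argument is otherwise the same.
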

\begin{proof}
Let $f$ be a function on $\N$ with finite support then, the SDE~\eqref{SDE1k} associated to the evolution equations give
\begin{align*}
&\frac{f(X_{d{-}1}^N(N^{d{-}1}t))-f(0)-M_{f}^{N}(N^{d{-}1}t)}{N^d}=\\
&\int_0^t\Delta^+(f)(X_{d{-}1}^N(N^{d{-}1}u)) \left(d\mu\frac{X_d^N(N^{d{-}1}u)}{N}+\lambda\ind{S_{d-3}^N=0,X_{d-2}^N(N^{d{-}1}u)>0}\right)\,\diff u\\
&{+}\!\!\int_0^t \Delta^-(f)(X_{d{-}1}^N(N^{d{-}1}u))
\!\!\left(\!\!(d{-}1)\mu \frac{X_{d{-}1}^N(N^{d{-}1}u)}{N}{+}\lambda \ind{S_{d-2}^N{=}0,X_{d{-}1}^N(N^{d{-}1}u)>0}\!\!\right)\,\diff u,
\end{align*}
where $\Delta^+(f)(x)=f(x+1)-f(x)$ and $\Delta^-(f)(x)=f(x-1)-f(x)$. 
The convergence of the various components of this identity are now examined. 

Clearly enough, $f$ being bounded, the  process $([f(X_{d{-}1}^N(N^{d{-}1}t))-f(0)]/N^d)$ is converging in distribution to $0$ as $N$ gets large. By calculating  the previsible increasing process of the martingale $(M_{f}^{N}(N^{d{-}1}t)/N^d)$, it is not difficult to show that this process vanishes at infinity. 

Note that
\begin{multline*}
\left|\int_0^t\Delta^+(f)(X_{d{-}1}^N(N^{d{-}1}u)) \ind{S_{d-3}^N=0,X_{d-2}^N(N^{d{-}1}u)>0}\,\diff u\right|\\\leq 
2\|f\|_{\infty} \int_0^t X_{d-2}^N(N^{d{-}1}u)\,\diff u=
2\|f\|_{\infty} \frac{1}{N^{d{-}1}}\int_0^{N^{d{-}1}t} X_{d-2}^N(u)\,\diff u,
\end{multline*}
 the process associated to the last term is converging in distribution to $0$ by Proposition~\ref{prop1}.
Similarly,
\begin{multline*}
\left|\int_0^t \Delta^-(f)(X_{d{-}1}^N(N^{d{-}1}u)) \ind{S_{d-2}^N>0,X_{d{-}1}^N(N^{d{-}1}u)>0}\,\diff u\right|
\\\leq 2\|f\|_\infty \sum_{k=1}^{d-2}\int_0^t X_{k}(N^{d{-}1}u)\,\diff u 
\end{multline*}
and the last term is also converging to $0$ in distribution. 
In the same way
\[
\left|\int_0^t \Delta^-(f)(X_{d{-}1}^N(N^{d{-}1}u)) \frac{X_{d{-}1}^N(N^{d{-}1}u)}{N}\,\diff u\right|
\leq 
2\|f\|_{\infty} \int_0^t \frac{X_{d{-}1}^N(N^{d{-}1}u)}{N}\,\diff u
\]
which converges to $0$ by the last assertion of Proposition~\ref{propdom}.

To summarize, we have proved that the following convergence  in distribution
\begin{multline}\label{eqaux}
\lim_{N\to+\infty} \left(\int_0^t\Delta^+(f)(X_{d-1}^N(N^{d-1}u)) d\mu\frac{X_d^N(N^{d-1}u)}{N}\,\diff u \right.\\\left.
+\int_0^t\Delta^-(f)(X_{d-1}^N(N^{d-1}u))\lambda
\ind{X_{d-1}^N(N^{d-1}u)>0}\,\diff u \right)=0.
\end{multline}

By using again Propositions~\ref{limit} and~\ref{proppsi}, one gets that the convergence of the sequence of processes
is converging to a continuous process $(\Phi(t))$ such that
\[
\left(\Phi(t)\right)\stackrel{\text{def.}}{=}\lim_{N\to+\infty} \left(\frac{X_0^{N_k}(N_k^{d-1}t)}{N_k}\right) =\left( \lambda \frac{(d{-}1)!}{\rho^{d-1}} \int_0^t \croc{\pi_u,I}\,\diff u\right),
\]
where $I(x)=x$ for $x\geq 0$. 
 By  the Skorohod representation theorem, one can  take a convenient probability space such  that, for all $x\in\N$,   this convergence also holds almost surely as well as the convergence of the processes   $(\croc{\Lambda_x^{N_k},\mathbbm{1}_{[0,t]}})$.  The identity
\[
\frac{X_d^N(N^{d-1}t)}{N}=\frac{F_N}{N}-\frac{X_0(N^{d-1}t)}{N}-\sum_{k=1}^{d-1} \frac{X_k^N(N^{d-1}t)}{N},
\]
and  Equation~\eqref{eqaux} give that  the relation
\[
\int_0^t\sum_{x\in\N} \pi_u(x)\left[ d\mu(\beta-\Phi(u))\Delta^+(f)(x)
+\lambda
\ind{x>0}\Delta^-(f)(x)\rule{0mm}{5mm}\right]\,\diff u =0
\]
holds almost surely for all $t\geq 0$ and all functions $f=f_k$, $k\geq 0$ with $f_k(x)=\mathbbm{1}_k(x)$ for $x\in\N$.  One concludes from  this relation and Proposition~\ref{proppsi}, for $u\in\R_+$ outside  a set ${\cal S}$ negligible for the  Lebesgue measure, one has for all $k\geq 0$
\[
\sum_{x\in\N} \pi_u(x)\left[ d\mu(\beta-\Phi(u))\Delta^+(f_k)(x)+\lambda \ind{x>0}\Delta^-(f_k)(x)\rule{0mm}{5mm}\right]=0
\]
and $\pi_u(\N)=1$.  Hence, if $u\in {\cal S}$,  $(\pi_u(x))$ is a geometric distribution,  the invariant distribution of an $M/M/1$ queue with arrival rate
$d\mu(\beta -\Phi(u))$
and service rate $\lambda$. The definition of $\Phi(t)$ gives therefore the fixed point equation, for all $t\geq 0$, 
\begin{equation}\label{intPhi}
\Phi(t)= \lambda \frac{(d{-}1)!}{\rho^{d-1}} \int_0^t\frac{d\mu\left(\beta -\Phi(u)\right)}{\lambda-d\mu\left(\beta -\Phi(u)\right)}  \,\diff u,
\end{equation}
one gets the relation
\[
\left(1-  \frac{\Phi(t)}{\beta}\right)^{\rho/d}e^{\Phi(t)}=\exp\left(- \lambda\frac{(d{-}1)!}{\rho^{d-1}}t\right).
\]
The theorem is proved. 
\end{proof}

One concludes this section with the asymptotic of the first instant when the network has lost a fraction $\delta\in(0,1)$ of its file. It generalizes Corollary~1 of Feuillet and Robert~\cite{Feuillet-Robert}.  This is a direct consequence of the above theorem. 
\begin{corollary}
If, for $\delta\in(0,1)$,
\[
T_N(\delta)=\inf\left\{t\geq 0: \frac{X_0^N(t)}{N}\geq \delta \beta\right\}
\]
then, under the condition $\lambda >d\beta\mu$,  the relation
\[
\lim_{N\to+\infty} \frac{T_N(\delta)}{N^{d-1}}=\frac{\rho^{d-1}}{\lambda(d-1)!}\left(-\frac{\rho}{d}\log(1-\delta)-\beta\delta\right)
\]
holds for the convergence in distribution.
\end{corollary}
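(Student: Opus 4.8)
The plan is to deduce this corollary directly from Theorem~\ref{theodec} by a first-passage-time argument. First I would rewrite, via the time change $t = N^{d-1}s$,
\[
\frac{T_N(\delta)}{N^{d-1}}=\inf\left\{s\geq 0:\frac{X_0^N(N^{d-1}s)}{N}\geq\delta\beta\right\},
\]
so that the rescaled decay time is the hitting time of the level $\delta\beta$ by the process $(X_0^N(N^{d-1}s)/N)$, which by Theorem~\ref{theodec} converges in distribution to the deterministic function $(\Phi(s))$.

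Next I would check that $\Phi$ is a continuous, strictly increasing bijection of $\R_+$ onto $[0,\beta)$. From the fixed point relation~\eqref{intPhi} one has $\dot\Phi(s)=\lambda\,((d{-}1)!/\rho^{d-1})\,d\mu(\beta-\Phi(s))/(\lambda-d\mu(\beta-\Phi(s)))$, which is strictly positive as long as $\Phi(s)<\beta$ because $\lambda>d\mu\beta$; together with $\Phi(0)=0$ and $\Phi(s)\to\beta$ as $s\to+\infty$ this gives the claim, so that for $\delta\in(0,1)$ there is a unique $t^*(\delta)$ with $\Phi(t^*(\delta))=\delta\beta$. Then I would pass to the limit in the hitting time: since $t\mapsto X_0^N(t)$ is non-decreasing and $\Phi$ is strictly increasing through the level $\delta\beta$, the first-passage-time functional is continuous at $\Phi$ for the topology of uniform convergence on compact sets, so (after realizing the convergence of Theorem~\ref{theodec} almost surely by the Skorohod representation theorem) $T_N(\delta)/N^{d-1}$ converges in distribution — equivalently in probability, the limit being deterministic — to $t^*(\delta)$.

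Finally I would identify $t^*(\delta)$ explicitly: substituting $y=\delta\beta$ in Equation~\eqref{decay} and taking logarithms gives
\[
\frac{\rho}{d}\log(1-\delta)+\delta\beta=-\lambda\frac{(d-1)!}{\rho^{d-1}}\,t^*(\delta),
\]
i.e. the announced value $\rho^{d-1}/(\lambda(d-1)!)\,(-\tfrac{\rho}{d}\log(1-\delta)-\beta\delta)$. The only point requiring a little care is the continuity of the first-passage-time map, but it is standard and painless here owing to the monotonicity of $X_0^N$ and the strict monotonicity of $\Phi$; everything else is either a quotation of Theorem~\ref{theodec} or an elementary computation.
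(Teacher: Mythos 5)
Your proposal is correct and is exactly the argument the paper intends: the paper states the corollary as a direct consequence of Theorem~\ref{theodec}, and your filling-in (time change, strict monotonicity of $\Phi$ from Relation~\eqref{intPhi} under $\lambda>d\mu\beta$, continuity of the first-passage functional at a strictly increasing continuous limit, and substitution of $y=\delta\beta$ in Equation~\eqref{decay}) is the standard route with the computations checking out.
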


\section{Second Order Asymptotics in the Stable Case}\label{CLT}
This section is devoted to the study of the second order fluctuations associated to the law of large numbers proved in Theorem~\ref{theodec}. As it will be seen the  proof relies on  careful stochastic calculus, technical estimates and Proposition~\ref{lmtitg} proved in Section~\ref{AverageSec}. 

\medskip
\noindent
{\bf Notations}
\begin{enumerate}
\item[---] If $(Y_N(t))$ and $(Z_N(t))$ are sequences of stochastic process, with a slight abuse of notation, we will write $Z_N(t)=Y_N(t)+{\cal O}_d(1)$ when the sequence $(Z_N(t)-Y_N(t))$ converges in distribution to $0$ when $N$ goes to infinity. 
\end{enumerate}

\begin{lemma}\label{lemserv}
Let 
\[
Y_{d-1}^N(t){\stackrel{\text{def.}}{=}}\frac{1}{\sqrt{N}}\int_0^{t}\left[(X_{d-1}^N(N^{d-1}u)+1)d\mu X_d^N(N^{d-1}u)-\lambda N X_{d-1}^N(N^{d-1}u)\right]\diff u\\
\]
then $(Y_{d-1}^N(t))$ converges in distribution  to $0$ as $N$ goes to infinity. 
\end{lemma}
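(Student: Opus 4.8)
The plan is to obtain an \emph{exact} identity for the integral defining $Y_{d-1}^N$ by applying the generator of $(X^N(t))$ to the function $x\mapsto x+x^2$ along the coordinate $X_{d-1}^N$, in the spirit of the proof of Proposition~\ref{lmtitg}; the only new feature is that here the role played there by the small coordinate $X_{k+1}^N$ is now played by $X_d^N$, which is of order $N$.

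First I would combine the SDE~\eqref{SDE1k} for $k=d-1$ (with $X_d^N(u)=F_N-X_0^N(u)-S^N_{d-1}(u)$) with the evolution equation for $(X_{d-1}^N(t)^2)$ obtained by taking $f(x)=x^2$ and compensating the Poisson processes; the latter has a square integrable martingale part $(M_{d-1,2}^N(t))$ whose previsible increasing process is obtained as in~\eqref{eqsaux6} with $k=d-1$. Adding the two equations and using the elementary identity $X_{d-1}^N\ind{S_{d-2}^N=0,X_{d-1}^N>0}=X_{d-1}^N-X_{d-1}^N\ind{S_{d-2}^N>0}$, one gets
\begin{multline*}
\int_0^t\left[(X_{d-1}^N(u)+1)\,d\mu X_d^N(u)-\lambda N X_{d-1}^N(u)\right]\diff u
=\tfrac12\left(X_{d-1}^N(t)+X_{d-1}^N(t)^2\right)+\mu(d{-}1)\int_0^t X_{d-1}^N(u)^2\diff u\\
-\lambda N\int_0^t X_{d-1}^N(u)\ind{S_{d-2}^N(u)>0}\diff u
-\lambda N\int_0^t (X_{d-1}^N(u)+1)\ind{S_{d-3}^N(u)=0,\,X_{d-2}^N(u)>0}\diff u\\
-\tfrac12 M_{d-1}^N(t)-\tfrac12 M_{d-1,2}^N(t).
\end{multline*}
After the change of variable $v=N^{d-1}u$, the process $(Y_{d-1}^N(t))$ is exactly $N^{-(d-1/2)}$ times the right-hand side of this identity evaluated at time $N^{d-1}t$, so it remains to show that each of the six terms — a boundary value, the quadratic integral $\mu(d{-}1)\int_0^{N^{d-1}t}X_{d-1}^N(u)^2\diff u$, two boundary integrals, and two martingales — tends to $0$ in distribution, uniformly on compact sets.

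All of this is now a consequence of the earlier estimates. The terms $X_{d-1}^N(N^{d-1}t)/N^{d-1/2}$ and $X_{d-1}^N(N^{d-1}t)^2/N^{d-1/2}$ vanish by Proposition~\ref{propdom} since $d{-}1/2>0$. For $\mu(d{-}1)N^{-(d-1/2)}\int_0^{N^{d-1}t}X_{d-1}^N(u)^2\diff u$, one bounds it by $\sup_{u\le t}\bigl(X_{d-1}^N(N^{d-1}u)/N^{1/4}\bigr)$ times $N^{-(d-1)-1/4}\int_0^{N^{d-1}t}X_{d-1}^N(u)\diff u$, both factors going to $0$ by Propositions~\ref{propdom} and~\ref{prop1}. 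For the two boundary integrals, using $\ind{X_i^N>0}\le X_i^N$ and $\ind{S_{d-2}^N>0}\le\sum_{k=1}^{d-2}X_k^N$ reduces them to $N^{3/2-d}$ times integrals $\int_0^{N^{d-1}t}X_k^N(u)X_{d-1}^N(u)\diff u$ and $\int_0^{N^{d-1}t}X_k^N(u)\diff u$ with $1\le k\le d-2$; since $k+1/2\le d-3/2$ and $N\ge1$, these are at most $N^{-(k+1/2)}$ times the same integrals, which tend to $0$ exactly as in the proof of Proposition~\ref{lmtitg} (via $\sup_{u\le t}X_{d-1}^N(N^{d-1}u)/N^{1/4}\to0$, $N^{-(k+1/4)}\int X_k^N\diff u\to0$ and $N^{-(k+1/2)}\int X_k^N\diff u\to0$ from Propositions~\ref{propdom} and~\ref{prop1}). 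Finally, Doob's inequality reduces the two martingale terms to $\E(\croc{M_{d-1}^N}(N^{d-1}t))/N^{2d-1}\to0$ and the same for $M_{d-1,2}^N$; bounding $X_d^N\le F_N$, which is ${\cal O}(N)$, and using that $\sup_{u\ge0,\,N\ge1}\E(X_{d-1}^N(u)^m)<\infty$ for every $m$ — from the stochastic domination of Proposition~\ref{propdom} and the finiteness of all moments of the geometric stationary law of $(L_0(t))$ — one reads off from~\eqref{SDEVk} and~\eqref{eqsaux6} (with $k=d-1$) that both increasing processes at time $N^{d-1}t$ have expectation ${\cal O}(N^d)$, which is $o(N^{2d-1})$ because $d\ge2$.

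I expect the only genuinely delicate point to be the bookkeeping in the displayed identity — the ``$+1$'' inside $(X_{d-1}^N+1)$ and the correct splitting of the boundary indicator $\ind{S_{d-2}^N=0,X_{d-1}^N>0}$ — together with the realisation that $\mu(d{-}1)\int_0^{N^{d-1}t}X_{d-1}^N(u)^2\diff u$ must be shown to be $o(N^{d-1/2})$ and not merely $o(N^{d-1})$, which is exactly what forces the use of the $N^{1/4}$-scale smallness of $X_{d-1}^N$ on the time scale $N^{d-1}t$ (Proposition~\ref{propdom}) jointly with Proposition~\ref{prop1}. Once the identity is in place, every remaining step is a direct application of Propositions~\ref{propdom}, \ref{prop1} and~\ref{lmtitg} and of Doob's inequality.
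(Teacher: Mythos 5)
Your proposal is correct and follows essentially the same route as the paper: applying the generator to $x\mapsto x+x^2$ along the coordinate $X_{d-1}^N$ (exactly as in Proposition~\ref{lmtitg}, with $d\mu X_d^N$ now playing the role of the upstream rate), isolating the integral defining $Y_{d-1}^N$, and killing the boundary value, the quadratic integral, the reflection terms and the martingales via Propositions~\ref{propdom} and~\ref{prop1} and Doob's inequality. Your version is in fact more explicit than the paper's about the bookkeeping of the identity and the moment bounds needed for $\croc{M_{d-1,2}^N}$, but there is no difference in substance.
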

\begin{proof}
By using the SDE satisfied by the process $(X_{d-1}^N(t))$, as in the proof in Proposition \ref{lmtitg}, one gets
\begin{align}
\frac{X_{d-1}^N(N^{d-1}t)}{N^{d-1/2}}&+\frac{X_{d-1}^N(N^{d-1}t)^2}{N^{d-1/2}}=\frac{M_{d-1}^N(t)}{N^{d-1/2}}+\frac{M_{d-1,2}^N(t)}{N^{d-1/2}}\notag\\
&+\frac{2}{N^{d-1/2}}\int_0^{N^{d-1}t}\left[(X_{d-1}^N(u)+1)\lambda N\ind{S_{d-3}^N(u)=0,X^N_{d-2}(u)>0}\right.\notag\\ &\hspace{2cm}\left.-X_{d-1}^N(u)((d-1)X_{d-1}^N(u)-\lambda N\ind{S_{d-2}^N(u)>0})\right]\diff u\notag\\
&+\frac{2}{N^{d-1/2}}\int_0^{N^{d-1}t}[(X_{d-1}^N(u)+1)d\mu X_d^N(u)-\lambda N X_{d-1}^N(u)]\diff u\notag
\end{align}
where $(M_{d-1}^N(t))$ and $(M_{d-1,2}^N(t))$ are the associated local martingales.
The processes of left hand side of this relation vanishes as $N$ gets large due to Proposition~\ref{propdom}. With similar arguments as in the proof of Proposition~\ref{lmtitg}, one obtains that the martingale terms and the first integral of the right hand side vanish too. This is again a consequence of Propositions~\ref{propdom} and~\ref{prop1}. 

Therefore,  the last term  
\[
\left(\frac{1}{N^{d-1/2}}\int_0^{N^{d-1}t}\left[(X_{d-1}^N(u)+1)d\mu X_d^N(u)-\lambda N X_{d-1}^N(u)\right]\diff u\right)
\]
 converges to $0$ in distribution when $N$ gets large.  The lemma is proved. 
\end{proof}

\begin{theorem}[Central Limit Theorem]\label{theoCLT}
If  $d\beta\mu{<}\lambda$ and   if Condition~\eqref{beta} holds and the initial state is $X^N(0)=(0,\ldots,0,F_N)$,
then  the following  convergence in distribution holds
\[
\lim_{N\to+\infty} \left(\frac{X_0^N(N^{d-1} t)-N\Phi(t)}{\sqrt{N}}\right)=(W(t)),
\]
where $\Phi(t)$ is the solution of Equation~\eqref{decay} and  the process $(W(t))$ is the solution of  the stochastic differential equation
\begin{equation}\label{SDECLT}
\diff W(t)=\sqrt{\Phi'(t)}\diff B(t)-
\frac{\lambda^2 \mu d!}{\rho^{d-1}}
\frac{W(t)-\gamma}{(\lambda-d\mu(\beta-\Phi(t)))^2}\diff t,
\end{equation}
with $W_0(0)=0$, where $({B}(t))$ is a standard Brownian motion and $\Phi(t)$ is the unique solution of  Equation~\eqref{decay}. 
\end{theorem}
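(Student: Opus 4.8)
The plan is to show that $W^N(t):=(X_0^N(N^{d-1}t)-N\Phi(t))/\sqrt N$ satisfies, up to terms that converge to $0$ in distribution, a closed stochastic integral equation whose drift is linear in $W^N$ and whose martingale part is asymptotically Gaussian, and then to identify the limit by pathwise uniqueness for~\eqref{SDECLT}. For the reduction to the coordinate $X_{d-1}^N$, Equation~\eqref{SDE11} gives $X_0^N(N^{d-1}t)/N=(\mu/N)\int_0^{N^{d-1}t}X_1^N(u)\,\diff u+M_0^N(N^{d-1}t)/N$, hence
\[
W^N(t)=\frac{M_0^N(N^{d-1}t)}{\sqrt N}+\sqrt N\left(\frac\mu N\int_0^{N^{d-1}t}X_1^N(u)\,\diff u-\Phi(t)\right).
\]
Relation~\eqref{eqaa} of Proposition~\ref{limit} allows one to replace $(\mu/N)\int_0^{N^{d-1}t}X_1^N$ by $\lambda\tfrac{(d-1)!}{\rho^{d-1}}\int_0^tX_{d-1}^N(N^{d-1}u)\,\diff u$ up to an ${\cal O}_d(1)$ term, and the fixed point identity~\eqref{intPhi} reads $\Phi(t)=\lambda\tfrac{(d-1)!}{\rho^{d-1}}\int_0^tm(u)\,\diff u$ with $m(u):=d\mu(\beta-\Phi(u))/b(u)$, where $b(u):=\lambda-d\mu(\beta-\Phi(u))$ is of class $C^1$ and satisfies $b(u)\ge\lambda-d\mu\beta>0$. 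Therefore $W^N(t)=M_0^N(N^{d-1}t)/\sqrt N+\lambda\tfrac{(d-1)!}{\rho^{d-1}}\sqrt N\int_0^t(X_{d-1}^N(N^{d-1}u)-m(u))\,\diff u+{\cal O}_d(1)$.

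\emph{Algebraic decomposition.} With $a^N(u):=d\mu X_d^N(N^{d-1}u)/N$, the elementary identities $(\lambda-a^N(u))X_{d-1}^N(N^{d-1}u)-a^N(u)=-\tfrac1N[(X_{d-1}^N(N^{d-1}u)+1)d\mu X_d^N(N^{d-1}u)-\lambda N X_{d-1}^N(N^{d-1}u)]$ and $b(u)m(u)=d\mu(\beta-\Phi(u))$ give
\begin{multline*}
b(u)\big(X_{d-1}^N(N^{d-1}u)-m(u)\big)=-\frac1N\big[(X_{d-1}^N(N^{d-1}u)+1)\,d\mu X_d^N(N^{d-1}u)-\lambda N X_{d-1}^N(N^{d-1}u)\big]\\
{}+\big(a^N(u)-d\mu(\beta-\Phi(u))\big)\big(1+X_{d-1}^N(N^{d-1}u)\big).
\end{multline*}
Using $X_d^N/N=F_N/N-X_0^N/N-S_{d-1}^N/N$, Condition~\eqref{beta}, the definition of $W^N$ and the bound $S_{d-1}^N(N^{d-1}\cdot)/\sqrt N\to0$ from Proposition~\ref{propdom}, one gets $a^N(u)-d\mu(\beta-\Phi(u))=-\tfrac{d\mu}{\sqrt N}(W^N(u)-\gamma)+{\cal O}_d(1)/\sqrt N$. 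Substituting, and noting that the term containing $(X_{d-1}^N(N^{d-1}u)+1)d\mu X_d^N(N^{d-1}u)-\lambda N X_{d-1}^N(N^{d-1}u)$ is by definition $\tfrac{\diff}{\diff u}(\sqrt N\,Y_{d-1}^N(u))$, with $(Y_{d-1}^N(t))$ the absolutely continuous process of Lemma~\ref{lemserv}, one arrives at
\begin{multline*}
W^N(t)=\frac{M_0^N(N^{d-1}t)}{\sqrt N}-\frac{\lambda(d-1)!}{\rho^{d-1}}\int_0^t\frac{\diff Y_{d-1}^N(u)}{b(u)}\\
{}-\frac{\lambda\,d!\,\mu}{\rho^{d-1}}\int_0^t\frac{(W^N(u)-\gamma)\big(1+X_{d-1}^N(N^{d-1}u)\big)}{b(u)}\,\diff u+{\cal O}_d(1).
\end{multline*}

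\emph{The three limits and the conclusion.} (i) The bracket of $(M_0^N(N^{d-1}t)/\sqrt N)$ equals $(\mu/N)\int_0^{N^{d-1}t}X_1^N(u)\,\diff u=X_0^N(N^{d-1}t)/N-M_0^N(N^{d-1}t)/N$, which converges to $\Phi(t)=\int_0^t\Phi'(u)\,\diff u$ by Theorem~\ref{theodec}, with $\Phi'(u)=\lambda\tfrac{(d-1)!}{\rho^{d-1}}m(u)>0$ by~\eqref{intPhi}; since the jumps are of size $1/\sqrt N\to0$, the martingale functional central limit theorem gives the convergence in distribution of $(M_0^N(N^{d-1}t)/\sqrt N)$ to $(\int_0^t\sqrt{\Phi'(u)}\,\diff B(u))$ for a standard Brownian motion $(B(t))$. (ii) Since $1/b$ is $C^1$, an integration by parts gives $\int_0^t\diff Y_{d-1}^N(u)/b(u)=Y_{d-1}^N(t)/b(t)+\int_0^tY_{d-1}^N(u)b'(u)/b(u)^2\,\diff u$, which converges to $0$ by Lemma~\ref{lemserv}. (iii) Along the subsequence $(N_k)$ of Section~\ref{StocAv}, the occupation measures of $(X_{d-1}^{N_k}(N_k^{d-1}\cdot))$ converge and, by the identification carried out in the proof of Theorem~\ref{theodec}, the limiting densities $\pi_u$ are the geometric laws of mean $m(u)$; since $1+m(u)=\lambda/b(u)$, using the $M/M/1$ stochastic domination of Proposition~\ref{propdom} for uniform integrability and the uniform convergence of $(W^{N_k})$ to a subsequential limit $(W)$ on a common probability space, the third integral converges to $\tfrac{\lambda d!\mu}{\rho^{d-1}}\int_0^t\lambda(W(u)-\gamma)/b(u)^2\,\diff u$. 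Collecting (i)--(iii), any subsequential limit $(W)$ of $(W^N)$ solves
\[
W(t)=\int_0^t\sqrt{\Phi'(u)}\,\diff B(u)-\frac{\lambda^2\mu\,d!}{\rho^{d-1}}\int_0^t\frac{W(u)-\gamma}{(\lambda-d\mu(\beta-\Phi(u)))^2}\,\diff u,\qquad W(0)=0,
\]
which is~\eqref{SDECLT}; pathwise uniqueness of this linear equation (its coefficients are Lipschitz on any $[0,T]$, since $b$ is bounded away from $0$) determines the law of $(W)$, so the whole sequence $(W^N)$ converges in distribution to $(W)$.

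\emph{Preliminaries and the main difficulty.} Two points must be settled before the last step. First, the tightness of $(W^N)$: it follows from the decomposition above by a stochastic Gronwall argument, using that $\int_0^t(1+X_{d-1}^N(N^{d-1}u))\,\diff u$ is bounded in probability with a controlled modulus of continuity (as in the proof of Proposition~\ref{proppsi}), which yields first $\sup_{u\le T}|W^N(u)|={\cal O}_d(1)$ and then equicontinuity. Second, the control of all the ${\cal O}_d(1)$ remainders, combining Propositions~\ref{propdom}, \ref{prop1} and~\ref{limit}, Lemma~\ref{lemserv} and Condition~\eqref{beta}. The main difficulty is the joint passage to the limit in (iii): one must pass simultaneously to the limit in the random, $N$-dependent, slowly varying weight $(W^N(u)-\gamma)/b(u)$ and in the fast non-tight coordinate $X_{d-1}^N(N^{d-1}u)$, only the time average of which survives. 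This is where the averaging principle of Section~\ref{StocAv} and Lemma~\ref{lemserv} do the essential work --- removing the fast fluctuations of $X_{d-1}^N$ --- and where one must carefully combine the tightness of $(W^N)$, the Skorokhod representation theorem, the occupation-measure convergence, and the uniform integrability coming from the $M/M/1$ coupling.
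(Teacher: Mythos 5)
Your proof is correct and follows essentially the same route as the paper's: the same key ingredients (Lemma~\ref{lemserv}, the martingale functional CLT for $M_0^N/\sqrt N$, the identity $W_d^N=-W_0^N+\gamma+{\cal O}_d(1)$, the averaging of the fast coordinate $X_{d-1}^N$, Gronwall for local boundedness and tightness, and uniqueness of the linear limiting SDE) appear in the same roles. The only difference is bookkeeping: you divide by $b(u)=\lambda-d\mu(\beta-\Phi(u))$ inside the integral and integrate the $Y_{d-1}^N$ term by parts against $1/b$, whereas the paper integrates $\int_0^t X_{d-1}^N(N^{d-1}u)\,b(u)\,\diff u$ by parts and substitutes the expansion of $\sqrt N H_N(t)$; the two pre-limit equations for $W^N$ are equivalent.
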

\begin{proof}
We denote by
\[
W_0^N(t)=\frac{X_0^N(N^{d-1} t)-N\Phi(t)}{\sqrt{N}} \text{ and }
W_d^N(t)=\frac{X_d^N(N^{d-1} t)-N(\beta-\Phi(t))}{\sqrt{N}}.
\]
The strategy of the proof consists in starting from the convergence proved in the above lemma to write an integral equation for the process $(W_0^N(t))$, this is Equation~\eqref{eqdd} below. Technical results of Section~\ref{AverageSec} are used repeatedly in the proof of this identity. The last part of the proof  consists in  proving the tightness and identifying the possible limits of this sequence. 

The total sum of the coordinates of  $(X_k^N(t))$ being $F_N$, scaling Condition~\eqref{beta} and Relation~\eqref{serv13} of Proposition~\ref{propdom} give the identity
\begin{multline}\label{eqcc}
W_d^N(t)=\frac{X_d^N(N^{d-1}u)-N(\beta-\Phi(t))}{\sqrt{N}}=\frac{F_N-N\beta}{\sqrt{N}}-\sum_{k=1}^{d-1}\frac{X_k^N(N^{d-1}u)}{\sqrt{N}}\\
-\frac{X_0^N(N^{d-1}u)-N\Phi(t)}{\sqrt{N}}=-W_0^N(t)+\gamma+{\cal O}_d(1).
\end{multline}
The SDE~\eqref{SDE11} gives the relation
\[
X_0^N(N^{d-1}t)=\mu\int_0^t X_1^N(N^{d-1}u)N^{d-1}\,\diff u +M_0^N(N^{d-1}t). 
\]
The previsible increasing process of the martingale $(M_0^N(N^{d-1}t)/\sqrt{N})$ is given by
\[
\left(\croc{\frac{M_0^N}{\sqrt{N}}}(N^{d-1}t)\right)=\left(\mu\frac{1}{N}\int_0^{N^{d-1}t} X_1^N(u)\,\diff u\right),
\]
and it is converging in distribution  to $(\Phi(t))$, see the proof of Proposition~\ref{limit}. Consequently, by using  Theorem~1.4 page~339 of  Ethier and Kurtz~\cite{Ethier}  for example, for the convergence in distribution of processes, one has 
\[
\lim_{N\to+\infty} \left(\frac{M_0^N}{\sqrt{N}}\right)=\left(\int_0^t \sqrt{\Phi'(u)}\,\diff B(u)\right)\stackrel{\text{dist.}}{=} (B(\Phi(t))),
\] 
where $(B(t))$ is a standard Brownian motion on $\R$.

Let 
\[
H_N(t)=\int_0^{t}X_{d-1}^N(N^{d-1}u)\,\diff u,
\]
Relation~\eqref{eqaa} of Proposition~\ref{limit} shows that
\[
\frac{\lambda(d{-}1)!}{\rho^{d-1}} \sqrt{N}H_N(t) =\sqrt{N}\mu\int_0^{N^{d-1}t} \frac{X_1^N(u)}{N}\,\diff u+{\cal O}_d(1)
\]
holds and   SDE~\eqref{SDE11} gives
\begin{multline*}
\mu\int_0^{N^{d-1}t} X_1^N(u)\,\diff u=X_0^N(N^{d-1}t)-M_0^N(N^{d-1}t)
\\=N\Phi(t)+\sqrt{N} W_0^N(t)-M_0^N(N^{d-1}t).
\end{multline*}
One obtains therefore the following expansion for $(\sqrt{N}H_N(t))$, 
\begin{equation}\label{eqH}
\sqrt{N}\left(\frac{\lambda(d{-}1)!}{\rho^{d-1}} H_N(t) - \Phi(t)\right)=W_0^N(t)-\frac{M_0^N(N^{d-1}t)}{\sqrt{N}}+{\cal O}_d(1).
\end{equation}
Lemma~\ref{lemserv} gives the relation
\[
\frac{1}{\sqrt{N}}\int_0^{t}\left[(X_{d-1}^N(N^{d-1}u)+1)d\mu X_d^N(N^{d-1}u)-\lambda N X_{d-1}^N(N^{d-1}u)\right]\diff u={\cal O}_d(1),
\]
which can be rewritten as 
\begin{multline*}
d\mu{\sqrt{N}}\int_0^{t} X_{d-1}^N(N^{d-1}u) \left(\frac{X_d^N(N^{d-1}u)}{N}-(\beta-\Phi(u))) \right)\diff u\\
+{\sqrt{N}}\int_0^{t} X_{d-1}^N(N^{d-1}u)\left( d\mu (\beta-\Phi(u))-\lambda \right)\diff u
\\+d\mu\sqrt{N}\int_0^{t}(\beta-\Phi(u))\diff u
+d\mu\frac{1}{\sqrt{N}}\int_0^{t}(X_d^N(N^{d-1}u)-N(\beta-\Phi(u))\diff u ={\cal O}_d(1).
\end{multline*}
If one plugs the integration by part 
\begin{multline*}
\int_0^{t}X_{d-1}^N(N^{d-1}u)\left[\lambda -d\mu (\beta-\Phi(u))\right] \diff u
\\=H_N(t)\left[\lambda -d\mu (\beta-\Phi(t))\right]- d\mu \int_0^{t}H_N(u)\Phi'(u) \diff u,
\end{multline*}
into this identity, this gives the relation
\begin{multline*}
d\mu\int_0^{t} X_{d-1}^N(N^{d-1}u) W_d^N(u)\diff u\\
-{\sqrt{N}}H_N(t)\left[\lambda -d\mu (\beta-\Phi(t))\right]+ d\mu \int_0^{t}\sqrt{N}H_N(u)\Phi'(u) \diff u
\\+d\mu\sqrt{N}\int_0^{t}(\beta-\Phi(u))\diff u
+d\mu\int_0^{t}W_d^N(u)\diff u ={\cal O}_d(1). 
\end{multline*}
The expansion~\eqref{eqH} for $(\sqrt{N}H_N(t))$ yields
\begin{multline*}
d\mu\int_0^{t} X_{d-1}^N(N^{d-1}u) W_d^N(u)\diff u+\sqrt{N}\Delta_N(t)\\
-\frac{\rho^{d-1}}{\lambda(d{-}1)!}\left(W_0^N(t)-\frac{M_0^N(N^{d-1}t)}{\sqrt{N}}\right)\left[\lambda -d\mu (\beta-\Phi(t))\right]\\+ d\mu \frac{\rho^{d-1}}{\lambda(d{-}1)!}\int_0^{t}\left(W_0^N(u)-\frac{M_0^N(N^{d-1}u)}{\sqrt{N}}\right)\Phi'(u) \diff u\\
+d\mu\int_0^{t}W_d^N(u)\diff u ={\cal O}_d(1)
\end{multline*}
with
\begin{align*}
\Delta_N(t)&=\frac{\rho^{d-1}}{\lambda(d{-}1)!}\left(-\Phi(t)\left[\lambda -d\mu (\beta-\Phi(t))\right]+ d\mu \int_0^{t}\Phi(u)\Phi'(u) \diff u\right)
\\& \qquad+d\mu\int_0^{t}(\beta-\Phi(u))\diff u\\
&=-\frac{\rho^{d-1}}{\lambda(d{-}1)!} \int_0^t \left[\lambda -d\mu (\beta-\Phi(u))\right]\Phi'(u)\,\diff u+d\mu\int_0^{t}(\beta-\Phi(u))\diff u\\
&=0,
\end{align*}
by Relation~\eqref{intPhi}. 
By using Equation~\eqref{eqcc}, one gets finally the relation
\begin{multline}\label{eqdd}
-d\mu\int_0^{t} X_{d-1}^N(N^{d-1}u) W_0^N(u)\diff u+d\mu\gamma\int_0^t X_{d-1}^N(N^{d-1}u)\,\diff u\\
-\frac{\rho^{d-1}}{\lambda(d{-}1)!}\left(W_0^N(t)-\frac{M_0^N(N^{d-1}t)}{\sqrt{N}}\right)\left[\lambda -d\mu (\beta-\Phi(t))\right]\\+ d\mu \frac{\rho^{d-1}}{\lambda(d{-}1)!}\int_0^{t}\left(W_0^N(u)-\frac{M_0^N(N^{d-1}u)}{\sqrt{N}}\right)\Phi'(u) \diff u\\-d\mu\int_0^{t}W_0^N(u)\diff u +d\mu\gamma t={\cal O}_d(1).
\end{multline}
Starting from the above equation, one can now complete the proof of the theorem in four steps. 
\begin{enumerate}
\item Local boundedness.\\
By using the convergence in distribution of $({M_0^N(N^{d-1}u)}/{\sqrt{N}})$ and Gronwall's Inequality, one gets that, 
for $\eps>0$ and  $T>0$, there exists some $K>0$ and $N_0$ such that if $N\geq N_0$, then 
\begin{equation}\label{eqauxAK}
\P\left(\sup_{0\leq s\leq T} |W_0^N(s)| \geq K\right)\leq \eps.
\end{equation}
\item Tightness.\\
One first note that  the two sequences of processes
\[
\left(\int_0^{t} X_{d-1}^N(N^{d-1}u) \diff u\right) \text{ and }\left(\int_0^{t} X_{d-1}^N(N^{d-1}u) W_0^N(u)\diff u\right) 
\]
satisfy the criterion of the modulus of continuity: for the first sequence this is a consequence of Proposition~\ref{limit} and Theorem~\ref{theodec}. 
Relation~\eqref{eqauxAK} and the fact that, for $0\leq s \leq t\leq T$,
\[
\left|\int_s^{t} X_{d-1}^N(N^{d-1}u) W_0^N(u)\diff u\right|\leq\left(\sup_{0\leq u\leq T} |W_0^N(u)| \right) \int_s^t X_{d-1}^N(N^{d-1}u) \diff u,
\]
give this property for the second sequence. As it has been seen this is also the case for $({M_0^N(N^{d-1}u)}/{\sqrt{N}})$. 
Relation~\eqref{eqdd} can thus be rewritten as
\begin{equation}\label{eqKA}
W_0^N(t)+\int_0^{t}W_0^N(u)F(u) \diff u=H^N(t),
\end{equation}
where $(F(t))$ is a deterministic continuous function and $(H^N(t))$ is a sequence of processes which satisfies the criterion of the modulus of continuity. 
As before, See relation~\eqref{modul},  denote $w_{Z}$ as the modulus  of continuity of the process $(Z(t))$ on $[0,T]$, Relation~\eqref{eqKA} gives the inequality,
\[
w_{W_0^N} \leq w_{H^N}+\delta \|F\|_\infty  \sup_{0\leq s\leq T} |W_0^N(s)|  
\]
with $\|F\|_\infty=\sup(|F(s)|,0{\leq}s{\leq}T)$. One deduces the tightness of $(W_0^N(t))$  by the criterion of the modulus of continuity. In particular any limiting point is a continuous process. 
\item Convergence of the first term of Equation~\eqref{eqdd}.\\
Let $(W(t))$ be a limit of some subsequence $(W_0^{N_k}(t))$. By Skorohod's representation theorem, on can assume that the
convergence
\[
\lim_{k\to+\infty} \left(\int_0^{t} X_{d-1}^{N_k}(N_k^{d-1}u) \diff u, W_0^{N_k}(t)\right)=\left(\frac{\rho^{d-1}}{\lambda(d{-}1)!}\Phi(t),W(t)\right)
\]
holds almost surely for the uniform norm on compact sets of $\R_+$. If $f$ is a $C^1(\R_+)$ function, by integration par parts, one has the convergence
\[
\lim_{k\to+\infty} \left(\int_0^{t} X_{d-1}^{N_k}(N_k^{d-1}u) f(u)\diff u\right)=\left(\frac{\rho^{d-1}}{\lambda(d{-}1)!}\int_0^t\Phi'(u)f(u)\,\diff u\right),
\]
which can be extended to any arbitrary continuous function $f$ by a regularization procedure. Since
\[
\lim_{k\to+\infty} \left(\int_0^{t} X_{d-1}^{N_k}(N_k^{d-1}u) (W_0^{N_k}(u)-W(u))\,\diff u\right)=0,
\]
one finally gets the convergence
\[
\lim_{k\to+\infty} \left(\int_0^{t} X_{d-1}^{N_k}(N_k^{d-1}u) W_0^{N_k}(u)\,\diff u\right)=\left(\frac{\rho^{d-1}}{\lambda(d{-}1)!}\int_0^t\Phi'(u)W(u)\,\diff u\right),
\]
\item Identification of the limit. \\
A possible limit $(W(t))$ satisfies therefore the integral equation
\begin{multline*}
-d\mu\int_0^{t} \left(\frac{\rho^{d-1}}{\lambda(d{-}1)!}\Phi'(u)+1\right) W(u)\diff u+ d\gamma\frac{\rho^{d-2}}{(d{-}1)!}\Phi(t)\\
-\frac{\rho^{d-1}}{\lambda(d{-}1)!}\left(W(t)-B(\Phi(t))\right)\left[\lambda -d\mu (\beta-\Phi(t))\right]\\+ d\mu \frac{\rho^{d-1}}{\lambda(d{-}1)!}\int_0^{t}\left(W(u)-B(\Phi(u))\right)\Phi'(u) \diff u+d\mu\gamma t=0,
\end{multline*}
and, with Relation~\eqref{intPhi}, it can be rewritten as
\begin{multline*}
-\lambda d\mu\int_0^{t} \frac{W(u)}{\lambda-(\beta-\Phi(u))}\,\diff u+ d\gamma\frac{\rho^{d-2}}{(d{-}1)!}\Phi(t)+d\mu\gamma t\\
-\frac{\rho^{d-1}}{\lambda(d{-}1)!}\int_0^t \left[\lambda -d\mu (\beta-\Phi(u))\right] \left(\diff W(u)-\sqrt{\Phi'(u)}\diff B(u)\right)=0.
\end{multline*}
The theorem is proved. 
\end{enumerate}

\end{proof}
\section{A Local Equilibrium in the Overloaded Case}\label{localsec}
\begin{figure}
        \centering
\scalebox{0.4}{\includegraphics{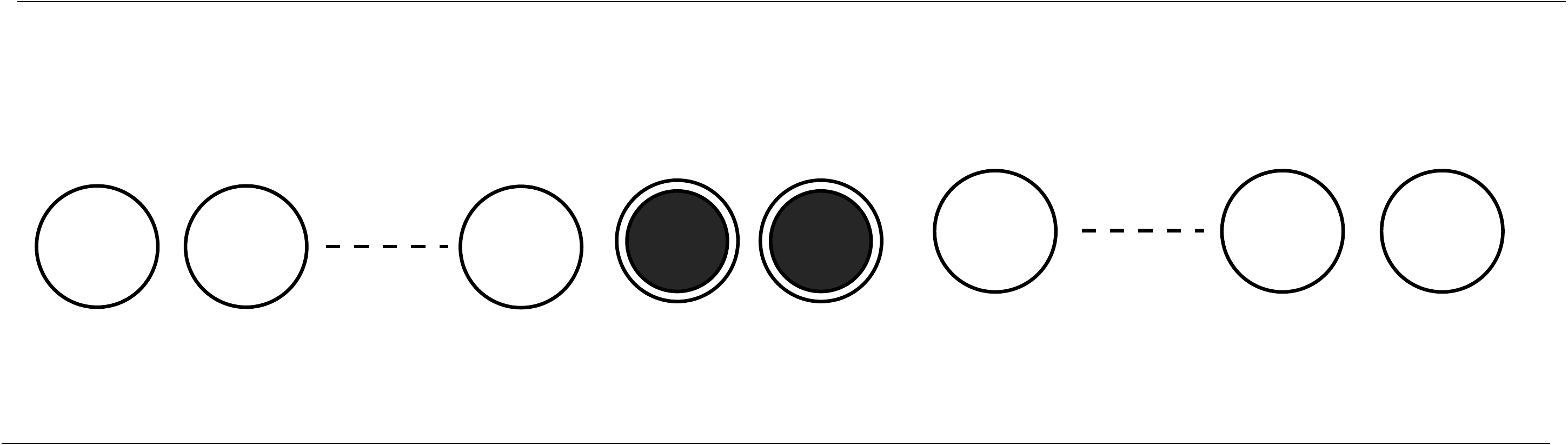}}
\put(-320,20){$X_0$}
\put(-285,20){$X_1$}
\put(-230,20){$X_{p{-}1}$}
\put(-195,20){$X_p$}
\put(-210,75){$\displaystyle \scriptscriptstyle{\left[ (p{+}1)\beta{-}\frac{\lambda}{\mu}\right]N}$}
\put(-165,20){$X_{p{+}1}$}
\put(-160,75){$\displaystyle \scriptscriptstyle{\left[ \frac{\lambda}{\mu}{-}p\beta\right]N}$}
\put(-130,20){$X_{p{+}2}$}
\put(-70,20){$X_{d-1}$}
\put(-30,20){$X_{d}$}
 \caption{Stable Asymptotic Fluid State in an Overloaded Network  with $p\beta\mu<\lambda<(p+1)\beta\mu$ for some $1< p < d$}  \label{fig1}
\end{figure}

We have seen in Corollary~\ref{fluid2} that 
if for some $2\leq p<d$, one has $p\beta\leq \rho<(p+1)\beta$ and if the initial state is $X_N(0)=(0,\ldots,0,F_N)$ then one has the convergence in distribution
\[
\lim_{N\to+\infty} \frac{1}{N}(X_p^N(t),X_{p+1}^N(t))=(x_p(t),x_{p+1}(t))
\]
and 
\[
\lim_{t\to+\infty}(x_p(t),x_{p+1}(t))= ((p+1)\beta -\rho, \rho - p \beta). 
\]
The system started with ${\sim}\beta N$ files with $d$ copies and it ends up, on the normal time scale, in a state where there are still $~\beta N$ files but with either $p$ or $p+1$ copies.  

In this section we  start from this ``equilibrium'', Proposition~\ref{propstay}  shows that this fluid state  does not change  on  the time scale $t\mapsto N^{p-2}t$.  Theorem~\ref{theoloc} proves that,  on  the time scale $t\mapsto N^{p-1}t$,  a positive fraction of files are lost. It is also shown that the number of files with $p$ copies decreases to end up in a state where, for the fluid state, there are only files with $p+1$ copies. 

One starts with an elementary result concerning the $M/M/\infty$ queue. 
\begin{lemma}
If $(L_N(t))$ is the Markov process associated to an $M/M/\infty$ queue with arrival rate $\lambda N$ and service rate $\mu$, and initial condition such that 
\[
\lim_{N\to+\infty} \frac{L_N(0)}{N}=\frac{\lambda}{\mu},
\]
then, for any $\ell\in\N$, the convergence in distribution
\[
\lim_{N\to+\infty} \left(\frac{L_N(N^\ell t)}{N}\right)=\frac{\lambda}{\mu}
\]
holds. 
\end{lemma}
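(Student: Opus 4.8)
The plan is to combine the classical structure of the $M/M/\infty$ queue with an exponential martingale estimate. Recall that the queue with arrival rate $\lambda N$ and service rate $\mu$ has a unique invariant law, the Poisson distribution with parameter $\lambda N/\mu$, and that it relaxes to equilibrium on the time scale $1/\mu$. Since $L_N(0)/N\to\lambda/\mu$ and the scale $t\mapsto N^\ell t$ is much faster than this relaxation time, the process $L_N(N^\ell t)/N$ should be pinned at $\lambda/\mu$ for every $t>0$; the real work lies in making this uniform on compact intervals.

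First I would write the stochastic differential equation for $(L_N(t))$ with the notation of~\eqref{Nh}, namely $L_N(t)=L_N(0)+{\cal N}_{\lambda N}([0,t])-{\cal N}_{\mu L_N}([0,t])$, hence, after compensation, $L_N(t)=L_N(0)+\lambda N t-\mu\int_0^t L_N(u)\,\diff u+M_N(t)$ with $\croc{M_N}(t)=\int_0^t(\lambda N+\mu L_N(u))\,\diff u$. Writing $V_N(t)=L_N(N^\ell t)/N-\lambda/\mu$ and performing the time change $u\mapsto N^\ell u$ one gets the mean reverting identity
\[
V_N(t)=V_N(0)\,e^{-\mu N^\ell t}+\int_0^t e^{-\mu N^\ell(t-u)}\,\diff\overline M_N(u),\qquad \overline M_N(t)=\frac{M_N(N^\ell t)}{N}.
\]
Applying Dynkin's formula to $x\mapsto(x-\lambda/\mu)^2$ and using $0\le L_N$, a direct computation gives $\mathcal A_N f\le -\mu N^\ell f+C N^{\ell-1}$ for $N$ large, where $\mathcal A_N$ is the generator of the time changed process $(L_N(N^\ell\cdot)/N)$ and $f(x)=(x-\lambda/\mu)^2$; a Gronwall argument then yields the uniform bound $\E(V_N(t)^2)\le V_N(0)^2 e^{-2\mu N^\ell t}+C/(\mu N)$. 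Since $V_N(0)\to 0$, this already establishes the convergence to $0$ of all finite dimensional marginals of $(V_N(t))$.

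To upgrade this to $\sup_{t\le T}|V_N(t)|\to 0$ in probability --- which, the limit being constant, is equivalent to the convergence in distribution of $(L_N(N^\ell t)/N)$ to the constant process $\lambda/\mu$ --- I would cut $[0,T]$ into the $m_N=\lceil N^\ell T\rceil$ intervals $I_j=[jN^{-\ell},(j{+}1)N^{-\ell}]$, each of unit length on the original time scale, so that $\sup_{t\in I_j}|V_N(t)|=\sup_{v\in[j,j+1]}|L_N(v)/N-\lambda/\mu|$. For a fixed $j$, conditionally on the past up to time $j$ and on the event $\{L_N(j)\le CN\}$ --- which has probability $1-O(e^{-cN})$ uniformly in $j$, since the marginals of $(L_N(t))$ are stochastically dominated by and concentrate around the stationary Poisson$(\lambda N/\mu)$ distribution --- the same mean reverting identity applied over the unit interval $[j,j+1]$, together with a Bernstein--Freedman exponential inequality for the compensated jump martingales (whose predictable quadratic variations are $O(N)$ on that event), gives $\P(\sup_{v\in[j,j+1]}|L_N(v)/N-\lambda/\mu|\ge\eps\mid{\cal F}_j)\le C e^{-c_\eps N}$ for $N$ large, uniformly in $j$. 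A union bound over the $m_N=O(N^\ell)$ intervals then yields
\[
\P\Big(\sup_{t\le T}|V_N(t)|\ge\eps\Big)\le\P\big(|V_N(0)|\ge\eps/2\big)+C\,N^\ell T\,e^{-c_\eps N},
\]
which tends to $0$ as $N\to\infty$, and the lemma follows.

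The main obstacle is precisely this last step. The second order estimate alone is too weak: a union bound over $O(N^\ell)$ intervals each carrying only an $O(1/N)$ deviation probability would diverge like $N^{\ell-1}$, and one cannot bound $|L_N-\lambda N/\mu|$ by the martingale $|M_N|$ directly over a horizon of length $N^\ell T$, since on that horizon the martingale itself is of order $\sqrt{N^{\ell+1}}$ --- it is the strong mean reversion, exploited by resetting between unit intervals, that keeps the process close to equilibrium. What then saves the union bound is that the $M/M/\infty$ queue fluctuates only at the Gaussian scale $\sqrt N$ with exponentially small tails, so the per interval probability is $e^{-c_\eps N}$, which beats the polynomial number $N^\ell$ of intervals; obtaining this exponential control on the supremum of the fluctuations over a unit time interval, uniformly in $j$ and $N$, through the exponential martingale inequality is the technical core of the proof.
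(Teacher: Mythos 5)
Your argument is correct in its overall architecture, but it takes a genuinely different and considerably heavier route than the paper. The paper's proof is a pure coupling/hitting-time argument: it observes that whenever $L_N(t)\geq(\rho+\eps)N$ the death rate exceeds $(\lambda+\mu\eps)N$, so the excess of $L_N$ above the level $(\rho+\eps)N$ is dominated by a \emph{subcritical} $M/M/1$ queue with rates $\lambda$ and $\lambda+\mu\eps$ (after the time change $t\mapsto Nt$), and then invokes the classical fact (Proposition~5.11 of Robert~\cite{Robert}, already used in Proposition~\ref{propdom}) that the hitting time of level $\eps N$ by a stable $M/M/1$ queue started at $0$ grows like $\delta^{-\eps N}$, i.e.\ exponentially in $N$, which beats every polynomial horizon $N^{\ell}$; a symmetric coupling gives the lower bound. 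This yields the uniform-in-$t$ statement in a few lines with no martingale estimates. Your route --- a second-moment/Gronwall bound for the finite-dimensional marginals, followed by a discretization of $[0,T]$ into $O(N^{\ell})$ unit intervals, exponential concentration of the marginals at the grid points, a Freedman-type exponential inequality for the compensated jump martingale on each unit interval, and a union bound --- is a standard and workable alternative; it buys quantitative (exponential) deviation bounds and would generalize to settings without an obvious monotone coupling, at the cost of substantially more bookkeeping. Your diagnosis that the $O(1/N)$ second-moment bound alone cannot survive a union bound over $N^{\ell}$ intervals, and that the exponential tails are what save the argument, is exactly right.

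One imprecision you should repair when writing this up: the conditional bound $\P(\sup_{v\in[j,j+1]}|L_N(v)/N-\rho|\geq\eps\mid{\cal F}_j)\leq Ce^{-c_\eps N}$ cannot hold on the event $\{L_N(j)\leq CN\}$ as stated, since that event allows $L_N(j)/N$ itself to be at distance more than $\eps$ from $\rho$, in which case the conditional probability is $1$. The good event at each grid point must be $\{|L_N(j)/N-\rho|\leq\eps'\}$ for a suitable $\eps'<\eps$ (depending on $e^{\mu}$ through the Gronwall constant of the mean-reverting identity); its complement is then controlled by the exponential concentration of the explicit Binomial--Poisson marginal law of $L_N(j)$, uniformly in $j\geq1$, which is the fact your remark about concentration around the Poisson$(\rho N)$ equilibrium is implicitly appealing to. With that correction the union bound closes as you describe.
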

\begin{proof}
For $\eps>0$,  by bounding the  rate of jumps $-1$ of the process,  a coupling can be constructed such that 
\[
L_N(t)\leq (\rho+\eps)N+\bar{L}_N(Nt)
\]
 holds for all $t\geq 0$, where $(\bar{L}_N(t))$ is an $M/M/1$ queue with input rate $\lambda$ and service rate $\lambda+\mu\eps$, with initial condition $\bar{L}_N(0)=0$. 
If $\tau_N=\inf\{t\geq 0: \bar{L}_N(t)\geq \eps N\}$ then,  Proposition~5.11 page~119 of Robert~\cite{Robert}, gives that for any $\ell\geq 1$ and $x>0$, 
\[
\lim_{N\to+\infty}\P(\tau_N\leq N^\ell x)=0.
\]
This proves that, for any $T>0$,
\[
\lim_{N\to+\infty} \P\left(\sup_{0\leq t\leq T}\frac{L_N(N^\ell t)}{N}\leq \rho+2\eps\right)=1.
\] 
With a similar argument for a lower bound one gets finally 
the convergence in distribution, for any $\ell\geq 0$,
\[
\lim_{N\to+\infty} \frac{L_N(N^\ell t)}{N}=\rho.
\]
The lemma is proved. 
\end{proof}
One shows in the next proposition that the fluid state of the network does not change on the time scale $t\mapsto N^{p-2}t$. 
\begin{prop}[Stability of Local Equilibrium on the time scale $t\mapsto N^{p-2}t$]\label{propstay}
If for some $2\leq p<d$, one has $p\beta<\rho<(p+1)\beta$, and the initial state $X_N(0)$ is such that $X_i^N(0)=0$ for $1\leq i\leq d$, $i\not\in\{p,p+1\}$ and
\[
\lim_{N\to+\infty}\left( \frac{X_p^N(0)}{N}, \frac{X_{p+1}^N(0)}{N}\right)=\left( (p+1)\beta -\rho,  \rho-p\beta\right)
\]
then for any $q\leq p-2$, for the convergence in distribution, 
\[
\lim_{N\to+\infty} \left(\frac{X_p^N(N^{q}t)}{N},\frac{X_{p+1}^N(N^{q}t)}{N}\right)=\left( (p+1)\beta -\rho,  \rho-p\beta\right)
\]
\end{prop}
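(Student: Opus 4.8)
The case $q=0$ is already contained in Theorem~\ref{flth} and Proposition~\ref{charflprop}: the point $(s_1,\dots,s_d)$ with $s_k=0$ for $k\le p{-}1$ and $s_k=\beta$ for $k\ge p$ (i.e.\ $(x_p,x_{p+1})=((p{+}1)\beta{-}\rho,\rho{-}p\beta)$, the other $x_k$ vanishing) is a fixed point of the limiting generalized Skorohod problem, so the fluid limit started there is constant. Assume from now on $0<q\le p{-}2$, in particular $p\ge 3$. On the time scale $t\mapsto N^q t$ the plan is to establish, up to an event of probability tending to one: \emph{(a)} the coordinates $X_1^N,\dots,X_{p-1}^N$ stay $O(\log N)$ while $X_{p+2}^N=\dots=X_d^N=0$; \emph{(b)} $X_0^N(N^q t)/N\to0$; \emph{(c)} the total number of copies $C^N(t):=\sum_{k=1}^d kX_k^N(t)$ satisfies $C^N(N^q t)/N\to\rho$; and then to conclude by conservation of files. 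As usual (a)--(c) are first proved for the process stopped at the exit time $\tau_N$ of a fixed neighbourhood of the equilibrium, and the estimates then show $\tau_N>N^q T$ with high probability for every $T$ (bootstrap).

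For \emph{(a)} I would adapt the coupling of Proposition~\ref{propdom} to $V^N(t):=\sum_{k=1}^{p-1}(p{-}k)X_k^N(t)$. Its jumps are $+1$, $-1$ or $-(p{-}1)$; a jump $+1$ occurs at rate at most $p\mu(V^N+\beta_0N)$ (bounding $X_p^N\le F_N\le\beta_0 N$ and the other low counts by $V^N$), while a jump $\le-1$ occurs at rate $\lambda N$ whenever $V^N>0$, since the duplication capacity then acts on the smallest non-empty coordinate, which is $\le p{-}1$. Because $p\beta<\rho$, one can pick $\beta_0>\beta$ close to $\beta$ and $\eps>0$ with $p(\beta_0{+}\eps)\mu<\lambda$; up to $\inf\{t:V^N(t)\ge\eps N\}$ the process $V^N$ is then dominated by an ergodic $M/M/1$ queue with $\Theta(N)$ rates and load $<1$, whose maximum over $[0,N^q T]$ is $O(\log N)$ with high probability, so this exit time exceeds $N^q T$ w.h.p.\ and $X_k^N=O(\log N)$ on $[0,N^q T]$ for $1\le k\le p{-}1$. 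As long as $X_p^N$ stays positive — which follows \emph{a posteriori} from (b), (c) and conservation — the smallest non-empty coordinate never reaches $p{+}1$, so $X_{p+2}^N=\dots=X_d^N=0$ there.

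Step \emph{(b)} is the crux, in the spirit of Proposition~\ref{prop1}. From the representation~\eqref{deveq} of $S_k^N$, using $S_{k+1}^N{-}S_k^N=X_{k+1}^N$ and $R_{k-1}^N(t){-}R_k^N(t)=\int_0^t\ind{S_{k-1}^N(u)=0,\,X_k^N(u)>0}\diff u$, taking expectations and discarding nonnegative terms, one gets for $1\le k\le p{-}1$
\[
\lambda N\,\E\left(\int_0^{N^q t}\ind{S_{k-1}^N(u)=0,\,X_k^N(u)>0}\diff u\right)\le(k{+}1)\mu\,\E\left(\int_0^{N^q t}X_{k+1}^N(u)\diff u\right).
\]
Combining this with $\ind{X_k^N>0}\le\ind{S_{k-1}^N=0,X_k^N>0}+\sum_{j<k}\ind{X_j^N>0}$ and the bound $X_k^N\le C\log N$ of step (a) (so that $X_k^N\le C\log N\cdot\ind{X_k^N>0}$), the quantities $a_k^N:=\E(\int_0^{N^q t}\ind{X_k^N(u)>0}\diff u)$ satisfy a triangular system: $a_{p-1}^N=O(N^q)$ (since $\int_0^{N^q t}X_p^N\le F_N N^q t$) and $a_k^N\le O(N^{-1}\log N)\,a_{k+1}^N+\sum_{j<k}a_j^N$ for $k\le p{-}2$. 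Solving it yields $a_1^N=O\!\left(N^{q-p+2}(\log N)^{O(1)}\right)$, hence $\E(\int_0^{N^q t}X_1^N(u)\diff u)=O((\log N)^{O(1)})=o(N)$ because $q\le p{-}2$; by~\eqref{SDE11} and Doob's inequality, $X_0^N(N^q t)/N\to0$ uniformly on compacts. For \emph{(c)}, the dynamics give $C^N(t)=C^N(0)+\lambda N\int_0^t\ind{S_{d-1}^N(u)>0}\diff u-\mu\int_0^t C^N(u)\diff u+\widetilde M^N(t)$ with $\widetilde M^N$ a martingale; on $[0,N^q T]$ the indicator is $1$ w.h.p., $C^N(0)/N\to p((p{+}1)\beta{-}\rho)+(p{+}1)(\rho{-}p\beta)=\rho$, and since $\lambda=\mu\rho$ the value $\rho N$ is an exact zero of the drift, so $\psi^N(t):=C^N(N^q t)/N-\rho$ obeys $\psi^N(t)=\psi^N(0)-\mu N^q\int_0^t\psi^N(s)\diff s+(\text{small})$; the strong damping by the factor $N^q$ absorbs the vanishing initial displacement and the martingale noise (of order $N^{-1/2}$ after filtering), so $C^N(N^q t)/N\to\rho$.

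To conclude: on $[0,N^q T]$, step (a) gives $C^N(N^q t)=pX_p^N(N^q t)+(p{+}1)X_{p+1}^N(N^q t)+O(\log N)$ and $X_p^N(N^q t)+X_{p+1}^N(N^q t)=F_N-X_0^N(N^q t)+O(\log N)$; dividing by $N$ and using (b), (c) and~\eqref{beta}, the two rescaled sums converge to the solution of $x_p+x_{p+1}=\beta$, $px_p+(p{+}1)x_{p+1}=\rho$, i.e.\ to $((p{+}1)\beta-\rho,\rho-p\beta)$ — which in particular justifies $X_p^N>0$ used above. Since the limit is deterministic, convergence in distribution of the rescaled processes follows, tightness being inherited from step (a) and from the linear equation of step (c). The genuinely delicate point is step (b): turning the $M/M/1$ dominations into rigorous bounds despite the random, time‑varying arrival rates, and closing the triangular system of occupation‑time estimates; the mild circularity between ``$X_0^N$ small'' and ``$\E\int X_1^N$ small'' is exactly what forces the bootstrap over $\tau_N$ and the use of the crude $O(\log N)$ bounds inside the recursion. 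Everything else is bookkeeping with the stochastic calculus of Sections~\ref{ModSec}--\ref{AverageSec}.
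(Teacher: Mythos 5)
Your proposal is correct and its architecture matches the paper's: the same $M/M/1$ domination of $\sum_{k=1}^{p-1}(p-k)X_k^N$ (your $V^N$ is the paper's $Z^N$), the same control of the total number of copies $\sum_k kX_k^N$ (the paper dominates it by an $M/M/\infty$ queue with arrival rate $\lambda N$, which is exactly the process whose SDE you write in step (c)), the same resolution of the $X_p^N>0$ circularity (a one-sided lower bound on $X_p^N$ obtained first, since the $M/M/\infty$ domination needs no positivity assumption), and the same final $2\times 2$ linear system. The one genuinely different ingredient is your step (b). The paper obtains $X_0^N(N^{p-2}t)/N\to 0$ by asserting that Propositions~\ref{prop1} and~\ref{limit} --- hence the second-moment machinery of Proposition~\ref{lmtitg} --- extend to the present initial condition under $p\mu\beta<\lambda$, and then telescoping the flow identities to compare $\int X_1^N/N$ with $\int X_{p-1}^N/N^{p-1}$. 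You instead close a triangular system for the occupation times $a_k^N=\E\int_0^{N^qt}\ind{X_k^N(u)>0}\,\diff u$ derived directly from the Skorohod representation~\eqref{deveq}, feeding in the w.h.p.\ $O(\log N)$ bound on the low coordinates. This is more self-contained (no need to re-derive the estimates of Section~\ref{AverageSec} for the new initial state), but it forces you to run the whole argument on the high-probability event where the pathwise $O(\log N)$ bound holds --- the stopping-time bootstrap you correctly flag --- since that bound cannot be inserted into an unconditional expectation. Both routes give $\E\int_0^{N^qt}X_1^N(u)\,\diff u=o(N)$ and hence, via~\eqref{SDE11}, the conclusion; in both, the hypothesis $q\le p-2$ enters through the gain of one factor of order $N^{-1}$ per level between coordinates $p-1$ and $1$.
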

\begin{proof}
Clearly it is enough to show the proposition for $q=p-2$.  Let 
\[
Z^N(t)=\sum_{k=1}^{p-1} (p-k)X_k^N(t),
\]
then, if $Z^N(t)=z$, there is a jump of size $+1$ for $Z^N$ at rate 
\[
\mu\sum_{k=2}^p kX_k^N(t)\leq p\mu\beta N,
\]
and of size $-1$ at rate $\lambda N$ if $z>0$.  In the same way as in in the proof of Proposition~\ref{propdom}, one can construct a coupling, for which 
\[
Z^N(t)\leq L_0(Nt),
\]
where $(L_0(t))$ is a stable $M/M/1$ queue with input rate $p\mu\beta$ and output rate $\lambda$.
 In particular, the convergence in distribution 
\begin{equation}\label{eqr1}
\lim_{N\to+\infty} \left(\frac{X_i^N(N^{p-2}t)}{N}\right)=0, \quad  1\leq i\leq p-1,
\end{equation}
holds.  

Because of Relation $p\mu\beta <\lambda$, one can extend the results of Propositions~\ref{prop1} and~\ref{limit} to get that, for $1\leq  k\leq p-2$, 
\[
\lim_{N\to\infty}\left( \int_0^{N^{p-2}t} 
\left[\frac{(k+1)!}{\rho^{k+1}} \frac{X_{k+1}^N(u)}{N^{k+1}}- \frac{k!}{\rho^{k}}  \frac{X_{k}^N(u)}{N^{k}}\right]\,\diff u\right)=0
\]
holds for the convergence in distribution. By summing up all these relations for $1\leq k\leq p-2$, one gets
\[
\lim_{N\to+\infty} \left(\frac{(p-1)!}{\rho^{p-1}} \frac{1}{N^{p-1}}\int_0^{N^{p-2}t}   X_{p-1}^N(u)\,\diff u 
-\frac{\mu}{\lambda}\int_0^{N^{p-2}t} \frac{X_1^N(u)}{N}\,\diff u\right)=0.
\]
Relation~\eqref{eqr1} gives the convergence in distribution
\[
\lim_{N\to+\infty} \left(\int_0^{N^{p-2}t}   \frac{X_{1}^N(u)}{N}\,\diff u \right)=0,
\]
consequently
\[
\lim_{N\to+\infty} \left(\frac{X_0^N(N^{p-2}t)}{N}\right)=0,
\]
by using the SDE associated to $(X_0^N (t))$ as in the proof of Proposition~\ref{limit}. 

One concludes that
\begin{equation}\label{j2013}
\lim_{N\to+\infty} \left(\frac{1}{N}\sum_{k=p}^{d} X_k^N(N^{p-2}t)\right)=\beta
\end{equation}
Let 
\[
Y^N(t)=\sum_{k=1}^{d} kX_k^N(t),
\]
then, if $Y^N(t)=y$, there is a jump of size $-1$ for $Y^N$ at rate  $\mu y$,
and of size $+1$ at rate $\lambda N$ if $X_1^N(t)+\cdots+(d-1)X_{d-1}^N(t)>0$. Hence, in the same way as in the proof of Proposition~\ref{propdom}, a coupling can be constructed such that the process $(Y^N(t))$ is dominated by  the process  $(L_N(t))$ of  the number of customers in an $M/M/\infty$ queue with arrival rate $\lambda N$ and service rate $\lambda$, and with initial condition such that 
\[
\lim_{N\to+\infty} \frac{L_N(0)}{N}=p((p+1)\beta-\rho)+(p+1)(\rho-p\beta)=\rho.
\]
By using the relation 
\[
\sum_{k=1}^{p-1} kX_k^N(t)+ pX_p^N(t) +(p+1)\left(\sum_{k=p}^d X_k^N(t)-X_p^N(t)\right)\leq Y^N(t)\leq L_N(t),
\]
Equations~\eqref{eqr1}~\eqref{j2013}
and the  above lemma, one gets that, for any $\eps>0$ and $T>0$,
\[
\lim_{N\to+\infty}\P\left(\inf_{0\leq t\leq T} \frac{X_p^N(N^{p-2} t)}{N}\geq (p+1)\beta-\rho-\eps\right)=1.
\]
Relation $\lambda<(p+1)\beta\mu$, gives that $(X_p^N(N^{p-2} t))$ is strictly positive on any finite interval with high probability.
Consequently,
\[
\lim_{N\to+\infty}\P\left(\inf_{0\leq t\leq T} X_1^N(N^{p-2}t)+\cdots+(d-1)X_{d-1}^N(N^{p-2}t)>1\right)=1
\]
this implies that the two processes $(Y^N(N^{p-2}t))$ and $(L_N(N^{p-2}t))$ are identical with probability close to $1$ when $N$ is large.  Secondly, since the duplication capacity cannot be used at any node with index greater than $p+1$, for any $p+2\leq k\leq d$, for the convergence in distribution, the relation
\[
\lim_{N\to+\infty}  \left(\frac{X_k^N(N^{p-2}t)}{N}\right)=0
\]
holds.  One deduces therefore the convergence in distribution
\begin{align*}
&\lim_{N\to+\infty}  \left(\frac{pX_p^N(N^{p-2}t)+(p+1)X_{p+1}^N(N^{p-2}t)}{N}\right)=\rho \\
&\lim_{N\to+\infty}  \left(\frac{X_p^N(N^{p-2}t)+X_{p+1}^N(N^{p-2}t)}{N}\right)=\beta. 
\end{align*}
The proposition is proved. 
\end{proof}
We can now state the main result of this section. 
\begin{theorem} [Evolution of the Local Equilibrium]\label{theoloc}
If for some $2\leq p<d$, one has $p\beta< \rho<(p+1)\beta$, and the initial state $X_N(0)$ is such that $X_i^N(0)=0$ for $1\leq i\leq d$, $i\not\in\{p,p+1\}$ and
\[
\lim_{N\to+\infty}\left( \frac{X_p^N(0)}{N}, \frac{X_{p+1}^N(0)}{N}\right)=\left( (p+1)\beta -\rho,  \rho-p\beta\right)
\]
then, for the convergence in distribution, 
\[
\lim_{N\to+\infty} \left(\frac{X_0^N(N^{p-1}t)}{N},\frac{X_p^N(N^{p-1}t)}{N},\frac{X_{p+1}^N(N^{p-1}t)}{N}\right)=\left(\Phi_0(t),\Phi_p(t),\Phi_{p+1}(t)\right)
\]
where, for $t\geq 0$,
\[
\Phi_p(t)=(p+1)(\beta-\Phi_0(t))-\rho \text{ and } \Phi_{p+1}(t)= \rho-p(\beta-\Phi_0(t))
\]
and $\Phi_0(t)$ is the unique solution $y$ of the fixed point equation
\begin{equation}\label{decay2}
\left(1-\frac{y}{\beta-\rho/(p+1)}\right)^{\rho/(p(p+1))} e^y=\exp\left(-\lambda\frac{(p{-}1)!}{\rho^{p-1}} t\right).
\end{equation}
In particular,
\begin{equation}\label{eqlaux}
\lim_{t\to+\infty} \left(\Phi_0(t),\Phi_p(t),\Phi_{p+1}(t)\right)=\left(\beta-\frac{\rho}{p+1}, 0,\frac{\rho}{p+1} \right).
\end{equation}
\end{theorem}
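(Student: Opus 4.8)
\emph{Overall plan.} I would prove this as the overloaded-case counterpart of Theorem~\ref{theodec}, following the same three stages used there — control of the ``small'' coordinates, a flux identity for $X_0^N$, and a stochastic averaging principle — with two structural changes. First, the critical time scale becomes $t\mapsto N^{p-1}t$ instead of $t\mapsto N^{d-1}t$, because a non-lost file now has to descend $p$ levels (not $d$) to be lost, each of the intermediate levels $1,\dots,p-1$ slowing it by a factor $N$ through the duplication push-back. Second, the unique ``slow reservoir'' $X_d^N$ of the stable case is replaced by the pair $(X_p^N,X_{p+1}^N)$, both of order $N$; these two are not governed by a fast relaxation but are pinned by two conservation laws (the number of non-lost files and the total number of copies), and the ``fast, small bottleneck'' played by $X_{d-1}^N$ in Section~\ref{AverageSec} is now played by $X_{p-1}^N$.

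\emph{Step~1: estimates on the scale $N^{p-1}t$ and identification of the reservoir.} From the local-equilibrium initial state I would first reprove, on the time scale $N^{p-1}t$, the analogues of Propositions~\ref{propdom}, \ref{prop1} and~\ref{lmtitg}. The coupling $Z^N(t)=\sum_{k=1}^{p-1}(p-k)X_k^N(t)\le L_0(Nt)$ with $(L_0(t))$ the queue length of a stable $M/M/1$ queue of input rate $p\mu\beta_0<\lambda$, as in the proof of Proposition~\ref{propstay}, gives $X_i^N(N^{p-1}t)/N^\alpha\to0$ for $1\le i\le p-1$ and all $\alpha>0$; the induction of Proposition~\ref{prop1} then gives $N^{-(k+\gamma)}\int_0^{N^{p-1}t}X_k^N(u)\,\diff u\to0$ for $1\le k\le p-1$; and the second-moment computation of Proposition~\ref{lmtitg} gives the flux balances $N^{-(k+1/2)}\int_0^{N^{p-1}t}[(k{+}1)\mu X_{k+1}^N(u)-\lambda N X_k^N(u)]\,\diff u\to0$ for $1\le k\le p-2$. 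Chaining these as in Proposition~\ref{limit} produces $X_1^N\approx\frac{(p-1)!}{(\rho N)^{p-2}}X_{p-1}^N$, hence
\[
\frac{X_0^N(N^{p-1}t)}{N}=\lambda\frac{(p{-}1)!}{\rho^{p-1}}\int_0^t X_{p-1}^N(N^{p-1}u)\,\diff u+{\cal O}_d(1).
\]
In parallel, the $M/M/\infty$ comparison used for Proposition~\ref{propstay}, applied in both directions to $Y^N(t)=\sum_k kX_k^N(t)$, gives $Y^N(N^{p-1}t)/N\to\rho$; and, on any compact interval on which $X_p^N$ stays of order $N$, the coordinates $p{+}2,\dots,d$ remain $o(N)$, since the duplication capacity cannot reach above level $p{+}1$ while $X_p^N>0$ and these coordinates start at $0$. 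Combined with the conservation of the number of non-lost files, this pins the reservoir:
\[
\frac{X_p^N(N^{p-1}t)}{N}=(p{+}1)\Big(\beta-\frac{X_0^N(N^{p-1}t)}{N}\Big)-\rho+{\cal O}_d(1),\quad
\frac{X_{p+1}^N(N^{p-1}t)}{N}=\rho-p\Big(\beta-\frac{X_0^N(N^{p-1}t)}{N}\Big)+{\cal O}_d(1).
\]

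\emph{Step~2: averaging and identification of the limit.} As in Section~\ref{StocAv} I would introduce the occupation measures $\Lambda_x^N$ of $(X_{p-1}^N(N^{p-1}t))$, which are tight by the $M/M/1$ domination above, pass to a convergent subsequence with limiting densities $\pi_t(x)$, and write the martingale identity associated with the SDE~\eqref{SDE1k} for $f(X_{p-1}^N)$ ($f$ of finite support) after division by $N^p$. Using Step~1 to discard the lower-order contributions (losses from level $p{-}1$, duplication from level $p{-}2$), one is left in the limit, for a.e.\ $u$, with the stationarity equation of an $M/M/1$ queue whose arrival rate is the rescaled flux of losses from level $p$, namely $p\mu\Phi_p(u)$ with $\Phi_p(u):=(p{+}1)(\beta-\Phi_0(u))-\rho$, and whose service rate is $\lambda$. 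Hence $\pi_u$ is the geometric invariant law of that queue and $\croc{\pi_u,I}=p\mu\Phi_p(u)/(\lambda-p\mu\Phi_p(u))$, and the flux identity of Step~1 gives the fixed point equation
\[
\Phi_0(t)=\lambda\frac{(p{-}1)!}{\rho^{p-1}}\int_0^t\frac{p\mu\Phi_p(u)}{\lambda-p\mu\Phi_p(u)}\,\diff u,\qquad\Phi_p(u)=(p{+}1)(\beta-\Phi_0(u))-\rho.
\]
Using $\lambda-p\mu\Phi_p=(p{+}1)\mu\Phi_{p+1}$ and $\beta-\rho/(p{+}1)-\Phi_0=\Phi_p/(p{+}1)$, one checks that the left-hand side of~\eqref{decay2} has logarithmic derivative $-\lambda(p{-}1)!/\rho^{p-1}$ and value $1$ at $t=0$, which is exactly~\eqref{decay2}; the map $y\mapsto(1-y/(\beta-\rho/(p{+}1)))^{\rho/(p(p+1))}e^y$ is strictly decreasing on $[0,\beta-\rho/(p{+}1))$ (because $\rho>p\beta$), so $\Phi_0(t)$ is uniquely determined, and $\Phi_p,\Phi_{p+1}$ follow from the conservation laws of Step~1, which proves the convergence of the triple. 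Letting $t\to\infty$ in~\eqref{decay2} forces $\Phi_0(t)\uparrow\beta-\rho/(p{+}1)$, hence $\Phi_p(t)\to0$ and $\Phi_{p+1}(t)\to\rho/(p{+}1)$, which is~\eqref{eqlaux}.

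\emph{Main obstacle.} The long, essentially routine part is transporting the whole apparatus of Section~\ref{AverageSec} (the couplings, the flux-balance estimates, the occupation-measure convergence and the martingale argument) to the longer scale $N^{p-1}t$ and to the new initial condition. The genuinely delicate point is handling the two order-$N$ coordinates $X_p^N,X_{p+1}^N$: unlike $X_d^N$ in the stable case they are not frozen on fast windows, so one must show that $X_p^N/N,X_{p+1}^N/N$ vary slowly (which follows from the conservation laws once $Y^N/N\to\rho$ is known) and, above all, run a bootstrap — via a stopping time $\tau_N=\inf\{t:X_p^N(N^{p-1}t)<\varepsilon N\}$ — showing that on every compact interval $[0,T]$ one has $\tau_N>T$ with probability tending to $1$. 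Without this lower bound on $X_p^N$ the duplication capacity could leak above level $p{+}1$, the coordinates $p{+}2,\dots,d$ would cease to be negligible, and the conservation-law identification of $X_p^N,X_{p+1}^N$ — hence the whole averaging scheme — would collapse. This stability estimate is, I expect, the technical heart of the proof.
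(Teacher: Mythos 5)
Your proposal is correct and follows essentially the same route as the paper: the flux-balance identities of Propositions~\ref{prop1} and~\ref{limit} transported to the time scale $t\mapsto N^{p-1}t$ to get $X_0^N(N^{p-1}t)/N\approx \lambda (p{-}1)!\rho^{-(p-1)}\int_0^t X_{p-1}^N(N^{p-1}u)\,\diff u$, the two conservation laws pinning $(X_p^N,X_{p+1}^N)$ exactly as in Proposition~\ref{propstay} with $\beta$ replaced by $\beta-\Phi_0(t)$, and a stochastic averaging principle \`a la Proposition~\ref{proppsi} for the occupation measure of $X_{p-1}^N$, viewed as an $M/M/1$ queue with arrival rate $p\mu\Phi_p(u)$ and service rate $\lambda$, leading to the fixed point equation~\eqref{decay2}. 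The paper's own proof is only a brief sketch deferring to Section~\ref{AverageSec} and Proposition~\ref{propstay}; your explicit stopping-time bootstrap keeping $X_p^N$ of order $N$ on compact intervals (so that duplication cannot leak above level $p{+}1$) addresses a point the paper leaves implicit.
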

\noindent
{\bf Remark.} 
Relation~\eqref{eqlaux} shows that a fraction $\beta{-}\rho/(p{+}1)$ of the files is lost asymptotically on the time scale $t\mapsto N^{p-1}t$.  The  corresponding asymptotic state consists then of  files which are either lost and, at the first order in $N$,  $\rho/(p{+}1)\cdot N$  files with $p{+}1$ copies.   This suggests that $\beta$ is changed to $\beta'{=}\rho/(p{+}1)$ and $p$ replaced by $p'{=}p{+}1$. Unfortunately, this is the case of equality $\beta'{=}p' \rho$ which is not covered by our theorem.  This suggests nevertheless the following evolution on the time scale $t\mapsto N^{q}t$, $p{-}1{\leq} q{\leq} d{-}2$,   for $t$ going to infinity, there remain $\rho/(q{+}2)N$ files alive with $q{+}2$. Some of the files are therefore lost and the number of copies of the remaining files is increasing, until the maximum number of copies is reached which is the framework of Section~\ref{AverageSec}.
\begin{proof}
The proofs use the same arguments as in the proof of Theorem~\ref{theodec} and of the above proposition. We give a quick overview of it. 
By using again the results of Propositions~\ref{prop1} and ~\ref{limit} and Relation~\eqref{id1}, one gets that, for $1\leq  k\leq p-2$, 
\[
\lim_{N\to\infty}\left( \int_0^{N^{p-1}t} 
\left[\frac{(k+1)!}{\rho^{k+1}} \frac{X_{k+1}^N(u)}{N^{k+1}}- \frac{k!}{\rho^{k}}  \frac{X_{k}^N(u)}{N^{k}}\right]\,\diff u\right)=0
\]
holds for the convergence in distribution. By summing up all these relations for $1\leq k\leq p-2$, one gets
\[
\lim_{N\to+\infty} \left(\frac{(p-1)!}{\rho^{p-1}} \frac{1}{N^{p-1}}\int_0^{N^{p-1}t}   X_{p-1}^N(u)\,\diff u 
-\frac{\mu}{\lambda}\int_0^{N^{p-1}t} \frac{X_1^N(u)}{N}\,\diff u\right)=0.
\]
From there one gets that
\[
\lim_{N\to+\infty} \left( \frac{X_0^N(u)}{N} -\frac{(p-1)!}{\rho^{p-1}} \frac{\lambda}{N^{p-1}}\int_0^{N^{p-1}t}   X_{p-1}^N(u)\,\diff u\right)=0.
\]
As in the proof of  Proposition~\ref{proppsi}, one can define a similar $(\Psi_f^N(t))$ and prove  the same stochastic averaging property associated to the coordinate $(X_p^N(t))$. The rest of the proof is then similar to the proof of the last proposition with $\beta$ replaced by $\beta-\phi(t)$ where $(\phi(t))$ is the limit of some converging subsequence of $(X_0^N(N^{p-1}t)/N)$. The convergence follows from the uniqueness of the fixed point equation satisfied by $(\phi(t))$. 
\end{proof}
\appendix
\section{Generalized Skorohod Problems}\label{SkoSec}
For the sake of self-containedness, this section presents quickly the more or less classical material necessary to state and prove the convergence results used in this paper.  The general theme concerns the rigorous definition of a solution of a stochastic differential equation constrained to stay in some domain and also the proof of the existence and uniqueness and regularity properties of such a solution. See Skorohod~\cite{Skorokhod}, Anderson and Orey~\cite{Anderson}, Chaleyat-Maurel and El~Karoui~\cite{Elkaroui} and, in a multi-dimensional context, Harrison and Reiman~\cite{Harrison} and Taylor and Williams~\cite{Taylor} and, in a more general context, Ramanan~\cite{Ramanan}. See Appendix~D of Robert~\cite{Robert} for a brief account.

We  first  recall  the classical  definition  of  Skorohod  problem in  dimension~$K$.  If
$z=(z_k)\in\R^K$,  one denotes  $\|z\|=|z_1|+|z_2|+\cdots+|z_k|$.  If  $(Z(t))=(Z_k(t))$ is
some function of  the set ${\cal D}(\R_+,\R^K)$ of c\`adl\`ag  functions defined on $\R_+$
and   $P$   is   a   $K\times   K$   non-negative  matrix,   the   couple   of   functions
$[(X(t)),(R(t))]=[((X_k(t))),((R_k(t)))]$ is said to be a solution of the Skorohod problem
associated to $(Z(t))$ and $P$ whenever
\begin{enumerate}
\item $X(t)=Z(t)+(I-P)\cdot  R(t)$, for all $t\geq 0$,
\item $X_k(t)\geq 0$, for all $t\geq 0$ and $1\leq k\leq d$,
\item For $1\leq k\leq K$, $t\to R_k(t)$ is non-decreasing, $R_k(0)=0$ and
\[
\int_{\R_+} X_k(t)\diff R_k(t)=0.
\]
\end{enumerate}
In the important case of dimension $1$, Conditions~(1) and~(3) are
\begin{enumerate}
\item $X(t)=Z(t)+R(t)$, for all $t\geq 0$,
\addtocounter{enumi}{+1}
\item $t\to R(t)$ is non-decreasing, $R(0)=0$ and
\[
\int_{\R_+} X(t)\diff R(t)=0.
\]
\end{enumerate}
See Chaleyat-Maurel and El~Karoui~\cite{Elkaroui} and, in a multi-dimensional context,
Harrison and Reiman~\cite{Harrison} and Taylor and Williams~\cite{Taylor}. See Appendix~D
of Robert~\cite{Robert} for a brief account. 
The generalization used in this paper corresponds to the case when $(Z(t))$ depends on
$(X(t))$. 
\begin{defi}[Generalized Skorohod Problem]\ \\
If $G: {\cal D}(\R_+,\R^K)\to {\cal D}(\R_+,\R^K)$ is a Borelian function and $P$ a non-negative
$K\times K$ matrix, $((X(t)),(R(t)))$ is a solution of the generalized Skorohod Problem
(GSP) associated to $G$ and $P$ if   $((X(t)),(R(t)))$ is the solution of the Skorohod Problem
associated to $G(X)$ and $P$, in particular, for all $t\geq 0$,  $$X(t)=G(X)(t)+(I-P)\cdot
R(t),$$
and 
\[
\int_{\R_+} X_k(t)\diff R_k(t)=0,\quad 1\leq k\leq K.
\]
\end{defi}
The classical Skorohod problem described above corresponds to the case when the functional
$G$ is constant and equal to $(Z(t))$. In dimension one, if one takes
\[
G(x)(t)=\int_0^t\sigma(x(u))\,\diff B(u) +\int_0^tm(x(u))\,\diff u,
\]
where  $(B(t))$ is a standard Brownian motion and $\sigma$ and $m$ are Lipschitz functions on
$\R$.  The first coordinate $(X(t))$ of a possible solution to the corresponding GSP can
be described as the solution of the SDE  
\[
\diff X(t)=\sigma(X(t))\,\diff B(t)+m(X(t))\,\diff t
\]
reflected at $0$.\\
\begin{prop}\label{GSPprop}
If $G: {\cal D}(\R_+,\R)\to {\cal D}(\R_+,\R)$ is such that, for any $T>0$, there exists a constant $C_T$
such that, for all  $(x(t))\in {\cal D}(\R_+,\R)$ and $0\leq t\leq T$,
\begin{equation}\label{Lip}
\sup_{0\leq s\leq t} \|G(x)(s)-G(y)(s)\|\leq C_T \int_0^t \|x(u)-y(u)\|\,\diff u
\end{equation}
and if the matrix $P$ is nilpotent, then there exists a unique solution to the
generalized Skorohod problem associated to the functional $G$ and the matrix $P$. 
\end{prop}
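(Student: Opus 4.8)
The plan is to rephrase the generalized Skorohod problem as a fixed point equation $X=\Psi(X)$ driven by the ordinary reflection map, and then to solve it by a Picard iteration whose contraction rate is controlled by the integral Lipschitz bound~\eqref{Lip}. The argument does not depend on the dimension, so I carry it out for a state space $\R^K$ with $P$ a nilpotent $K\times K$ matrix, as needed in the body of the paper. A preliminary remark is that~\eqref{Lip} forces $G$ to be causal: if $x=y$ on $[0,t]$ then the right-hand side of~\eqref{Lip} vanishes, so $G(x)=G(y)$ on $[0,t]$; hence $G$ descends to a map ${\cal D}([0,T],\R^K)\to{\cal D}([0,T],\R^K)$ for each $T$, and solutions built on nested time intervals will automatically be consistent. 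It therefore suffices to produce, for every $T>0$, a unique solution on $[0,T]$.

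\emph{Step one: the $K$-dimensional reflection map.} In dimension one the Skorohod problem for a free path $z\in{\cal D}(\R_+,\R)$ has the explicit solution $r(t)=\sup_{0\le s\le t}\max(-z(s),0)$, $x(t)=z(t)+r(t)$, which is c\`adl\`ag and depends on $z$ in a $1$-Lipschitz way for the uniform norm on each $[0,T]$ (see Appendix~D of~\cite{Robert}). Since $P$ is nilpotent, after reordering the coordinates one may assume $P$ is strictly lower triangular; the $k$-th line of $X=Z+(I-P)R$ then reads $X_k=\big(Z_k-\sum_{j<k}P_{kj}R_j\big)+R_k$, so once $R_1,\dots,R_{k-1}$ have been produced the pair $(X_k,R_k)$ is obtained by applying the one-dimensional construction to the free path $Z_k-\sum_{j<k}P_{kj}R_j$. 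Composing these $K$ steps defines the reflection map $\Gamma\colon Z\mapsto(X,R)$, it solves the Skorohod problem associated to $Z$ and $P$ and is the only such solution, and an induction on $k$ from the one-dimensional Lipschitz estimate yields a constant $c=c(K,P)$ with
\[
\sup_{0\le s\le t}\|\Gamma_X(Z)(s)-\Gamma_X(Z')(s)\|\le c\sup_{0\le s\le t}\|Z(s)-Z'(s)\|,\qquad t\ge 0,
\]
where $\Gamma_X$, $\Gamma_R$ denote the two components of $\Gamma$.

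\emph{Step two: the fixed point.} Fix $T>0$ and let ${\cal E}_T$ be the Banach space of c\`adl\`ag maps $[0,T]\to\R^K$ with the uniform norm $\|x\|_T=\sup_{0\le t\le T}\|x(t)\|$; since uniform limits of c\`adl\`ag functions are c\`adl\`ag, it is complete. A solution of the generalized Skorohod problem on $[0,T]$ is exactly a fixed point in ${\cal E}_T$ of $\Psi(x)\stackrel{\text{def.}}{=}\Gamma_X(G(x))$, the companion reflection being $\Gamma_R(G(x))$. Combining Step one with~\eqref{Lip},
\[
\sup_{0\le s\le t}\|\Psi(x)(s)-\Psi(y)(s)\|\le cC_T\int_0^t\|x(u)-y(u)\|\,\diff u\le cC_T\int_0^t\sup_{0\le v\le u}\|x(v)-y(v)\|\,\diff u,
\]
and iterating this inequality (using that the integrand on the right is nondecreasing) gives
\[
\|\Psi^n(x)-\Psi^n(y)\|_T\le \frac{(cC_TT)^n}{n!}\,\|x-y\|_T,\qquad n\ge 1.
\]
Hence some power $\Psi^n$ is a strict contraction on ${\cal E}_T$, so $\Psi$ has a unique fixed point $X$; together with $\Gamma_R(G(X))$ this is the unique solution of the GSP on $[0,T]$ (all three conditions of a Skorohod problem hold coordinatewise by Step one). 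Letting $T$ vary and using causality to glue these solutions produces the unique solution on $\R_+$, which is the assertion.

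I expect the only genuinely delicate point to be Step one: one must check that nilpotency of $P$ is precisely what rules out a feedback loop among the coordinates, so that the $K$-fold composition of one-dimensional Skorohod maps is well defined and its Lipschitz constant stays finite; granting this, the remainder is the standard Picard--Gronwall scheme.
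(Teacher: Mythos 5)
Your proposal is correct and follows essentially the same route as the paper: both reduce the problem to the classical Skorohod reflection map, invoke its uniform Lipschitz continuity, combine it with the integral bound~\eqref{Lip}, and close the argument with a Picard--Gronwall iteration yielding the factorial bound $(\alpha T)^n/n!$. The only cosmetic differences are that you package the iteration as a contraction of some power $\Psi^n$ rather than showing the iterates are Cauchy, and that you construct the multi-dimensional reflection map explicitly from the triangular structure of a nilpotent $P$ where the paper cites Harrison--Reiman and Proposition~D.4 of~\cite{Robert}.
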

\begin{proof}
Define the sequence $(X_N(t))$ by induction $(X^0(t),R^0(t))=0$ and, for $N\geq 1$,  $(X^{N+1},R^{N+1})$ is the
solution of the Skorohod problem  (SP) associated to $G(X^{N})$, in particular,
\[
X^{N+1}(t)=F\left(X^N\right)(t)+R^{N+1}(t) \text{ and } \int_{\R_+} X^{N+1}(u)\diff R^{N+1}(u)=0.
\]
The existence of such a solution is a consequence of a result of Harrison and Reiman~\cite{Harrison}.
Fix $T>0$. The Lipschitz property of the solutions of a classical Skorohod problem, see Proposition~D.4 of
Robert~\cite{Robert}, gives the existence of some constant $K_T$ such that, for all
$N\geq 1$ and $0\leq t\leq T$, 
\[
\left\|X^{N+1}-X^{N}\right\|_{\infty,t}\leq K_T \left\|F\left(X^{N}\right)-F\left(X^{N-1}\right)\right\|_{\infty,t},
\]
where $\|h\|_{\infty,T}= \sup\{\|h(s)\|:0\leq s\leq T\}$. From Relation~\eqref{Lip}, this
implies that
\[
\left\|X^{N+1}-X^{N}\right\|_{\infty,t}\leq \alpha\int_0^{t} \left\|X^{N}-X^{N-1}\right\|_{\infty,u}\,\diff u,
\]
with $\alpha=K_TC_T$. The iteration of the last relation yields the inequality
\[
\left\|X^{N+1}-X^{N}\right\|_{\infty,t}\leq \frac{(\alpha t)^N}{N!} \int_0^{t}
\left\|X^{1}\right\|_{\infty,u}\,\diff u, \quad 0\leq t\leq T.
\]
One concludes that the sequence $(X^N(t))$ is converging uniformly on compact sets and
consequently the same is true for the sequence $(R^N(t))$. Let $(X(t))$ and $(R(t))$ be the
limit of these sequences. By continuity of the SP, the couple $((X(t)), (R(t)))$ is the
solution of the SP associated to $G(X)$, and hence a solution of the GSP associated to $F$. 

{\bf Uniqueness.} If $(Y(t))$ is another solution of the GSF associated to $F$. In the same way
as before, one gets by induction, for $0\leq t\leq T$,
\[
\left\|X-Y\right\|_{\infty,t}\leq \frac{(\alpha t)^N}{N!} \int_0^{t} \left\|X-Y\right\|_{\infty,u}\,\diff u,
\]
and by letting $N$ go to infinity, one concludes that $X=Y$. The proposition is proved. 
\end{proof}

\providecommand{\bysame}{\leavevmode\hbox to3em{\hrulefill}\thinspace}
\providecommand{\MR}{\relax\ifhmode\unskip\space\fi MR }
\providecommand{\MRhref}[2]{%
  \href{http://www.ams.org/mathscinet-getitem?mr=#1}{#2}
}
\providecommand{\href}[2]{#2}

\end{document}